\documentclass[english]{article}
\usepackage{geometry}

\geometry{verbose,tmargin=1in,bmargin=1in,lmargin=1in,rmargin=1in}

\usepackage{graphicx} % Required for inserting images
\usepackage[skins,theorems]{tcolorbox}
%\tcbset{colback=white}
\tcbset{highlight math style={enhanced,colframe=red,colback=white}}
%,arc=0pt,boxrule=1pt,boxsep=0.1pt,left=0pt,right=0pt,top=0pt,bottom=0pt}}

\usepackage{graphics}

\usepackage{natbib}
\usepackage{amsmath, amssymb,float,booktabs,mathabx}

\usepackage{enumitem}

\usepackage[colorlinks=true,allcolors=blue]{hyperref}

%====================================================================
%                     DEFINITION OF SYMBOLS
%====================================================================
%----------Vecror of Chapital Cases
\newcommand{\vA}{\mbox{\boldmath $A$}}

\newcommand{\vI}{\mbox{\boldmath $I$}}

%----------Vecror of Lower Cases

\newcommand{\vb}{\mbox{\boldmath $b$}}

\newcommand{\vx}{\mbox{\boldmath $x$}}

%--------------Vector of Lower ell 

%--------------Vector of Greek Capital Cases---------------

%--------------Vector of Greek Lower Cases---------------

%%%%% NEW MATH DEFINITIONS %%%%%
\usepackage{amsmath,bbm,bm}
\usepackage{amsfonts}
\usepackage{amsthm}
\usepackage{mathtools}

% algorithm
\usepackage{algorithm}
\usepackage{algpascal}
\usepackage{algorithmicx}
\usepackage{algpseudocode}
\usepackage{tabularx}
\algdef{SE}[SUBALG]{Indent}{EndIndent}{}{\algorithmicend\ }%
\algtext*{Indent}
\algtext*{EndIndent}

% commands
% global count (no section number)
\newtheorem{thm}{Theorem}%[section]
\newtheorem{lem}{Lemma}

\newtheorem{aspt}{Assumption}

% Comments
\usepackage{xcolor}
\newcount\comments  % 0 suppresses notes to selves in text
\comments=0  % TODO: change to 0 for final version
\newcommand{\genComment}[2]{\ifnum\comments=1{\textcolor{#1}{\textsf{\footnotesize #2}}}\fi}

% % No number
% \newtheorem*{thm*}{Theorem}%[section]
% \newtheorem*{lem*}{Lemma}
% \newtheorem*{prop*}{Proposition}
% \newtheorem*{cor*}{Corollary}
% \newtheorem*{conj*}{Conjecture}
% \newtheorem*{aspt*}{Assumption}
% \newtheorem*{claim*}{Claim}
% \newtheorem*{rmk*}{Remark}
% \newtheorem*{commt*}{Comment}
% \newtheorem*{defn*}{Definition}

% Mark sections of captions for referring to divisions of figures

% color

% Highlight a newly defined term

% Figure reference, lower-case.

% Figure reference, capital. For start of sentence

% Section reference, lower-case.

% Section reference, capital.

% Reference to two sections.

% Reference to three sections.

% Reference to an equation, lower-case.
\def\eqref#1{equation~\ref{#1}}
% Reference to an equation, upper case

% A raw reference to an equation---avoid using if possible

% Reference to a chapter, lower-case.

% Reference to an equation, upper case.

% Reference to a range of chapters

% Reference to an algorithm, lower-case.

% Reference to an algorithm, upper case.

% Reference to a part, lower case

% Reference to a part, upper case

\def\1{\bm{1}}

% Random variables

% rm is already a command, just don't name any random variables m

% Random vectors

% Elements of random vectors

% Random matrices

% Elements of random matrices

% Vectors

\def\vb{{\bm{b}}}

\def\vx{{\bm{x}}}

% Elements of vectors

% Matrix

% Tensor
\DeclareMathAlphabet{\mathsfit}{\encodingdefault}{\sfdefault}{m}{sl}
\SetMathAlphabet{\mathsfit}{bold}{\encodingdefault}{\sfdefault}{bx}{n}

% Graph

% Sets

% Don't use a set called E, because this would be the same as our symbol
% for expectation.

% Entries of a matrix

% entries of a tensor
% Same font as tensor, without \bm wrapper

% The true underlying data generating distribution

% The empirical distribution defined by the training set

% The model distribution

% Stochastic autoencoder distributions

 % Laplace distribution

%\newcommand{\R}{\mathbb{R}}

% Wolfram Mathworld says $L^2$ is for function spaces and $\ell^2$ is for vectors
% But then they seem to use $L^2$ for vectors throughout the site, and so does
% wikipedia.

 % See usage in notation.tex. Chosen to match Daphne's book.

\definecolor{ballblue}{rgb}{0.13, 0.67, 0.8}
\definecolor{darkred}{RGB}{139,0,0}

\allowdisplaybreaks

\title{Stochastic Shortest Path Problem with Failure Probability}

\usepackage{authblk}

\author{Ritsusamuel Otsubo}
\affil{
  Electrical Systems\\
  Industrial Research Center of Shiga Prefecture}
\date{}
\begin{document}

\maketitle

\begin{abstract}
We solve a sequential decision-making problem under uncertainty that takes into account the failure probability of a task. 
This problem cannot be handled by the stochastic shortest path problem, which is the standard model for sequential decision-making. 
This problem is addressed by introducing dead-ends. 
Conventionally, we only consider policies that minimize the probability of task failure, so the optimal policy constructed could be overly conservative. 
In this paper, we address this issue by expanding the search range to a class of policies whose failure probability is less than a desired threshold. 
This problem can be solved by treating it as a framework of a Bayesian Markov decision process and a two-person zero-sum game. 
Also, it can be seen that the optimal policy is expressed in the form of a probability distribution on a set of deterministic policies. 
We also demonstrate the effectiveness of the proposed methods by applying them to a motion planning problem with obstacle avoidance for a moving robot.
\end{abstract}

%\tableofcontents

%%%%%%%%%%%%%%%%%%%%%%%%%%%%%%%%%%%%%%%%%%%%%%%%%%%%%%%%%%%%%%%%%%%%%%%%%%%%%%%%%%%%%%%%%%%%%%%%%%%%%%%%%%%%%%%%%%%%%%%%%%%%%
\section{Introduction}\label{sec1}
%%%%%%%%%%%%%%%%%%%%%%%%%%%%%%%%%%%%%%%%%%%%%%%%%%%%%%%%%%%%%%%%%%%%%%%%%%%%%%%%%%%%%%%%%%%%%%%%%%%%%%%%%%%%%%%%%%%%%%%%%%%%%
A stochastic shortest path problem (SSP) is a standard model for sequential decision making under uncertainty.
SSP is a policy design problem defined for a Markov decision process (MDP) with a terminal state in which no cost and  no transition to another state occurs.  In SSP, the evaluation of the policy is based on the expected total cost incurred during an episode. 
There is a proper policy with which the transition probability from each initial state to the terminal state equals 1 in SSP.
SSP assumes that the expected total costs of certain initial states become infinite when an improper policy is applied (\cite{Bertsekas1996}). 
However, this model is unable to handle cases where a catastrophic event may occur during an episode for any policy, starting from a specific initial state.
In this case, the existence of proper policies cannot be guaranteed. 
\cite{TrevizanAAAI2016} introduces an example of a rescue mission using a UAV. 
The chosen UAV quickly finds targets while improving battery consumption and reducing the probability of transitioning to a dead-end.
We modify SSP to deal with the above issues regarding proper policies.

We deal with the above issues by introducing dead-ends to express catastrophic events.
A dead-end is a state where there is no route to the terminal state.
\cite{kolobovUAI2012} proposed finite-penalty SSP with unavoidable dead-ends (fSSPUDE), which imposes a finite penalty when transitioning to a dead-end.
In this approach, the relationship between the penalty and the failure probability of the obtained policy is not clear (\cite{kolobovAAAI2012}).
\cite{kolobovUAI2012} also proposed infinite-penalty SSP with unavoidable dead-ends (iSSPUDE), which imposes an infinite penalty when transitioning to a dead-end.
\cite{Konigsbuch2012} proposed stochastic safest and shortest path problems (S$^3$P), which relaxed the assumptions regarding the costs of dead-ends in iSSPUDE.
Both iSSPUDE and S$^3$P derive a policy that minimizes the conditional expected total cost by considering only episodes that reach the terminal state from the set of policies that maximize the transition probability to the terminal state.
\cite{Konigsbuch2017} proposed min-cost given max-prob (MCMP), which also takes into account the transition cost to a dead-end.
iSSPUDE, S$^3$P and MCMP derive optimal policies that are in the class of policies that maximize the transition probability to the terminal state.
Therefore, the resulting policies may be too conservative. 
\cite{Freire2017}, \cite{Freire2019} and \cite{Freire2023} relaxed the condition of the transition probability to the terminal state.  
In this paper, we expand the searched policy class of the conventional S$^3$P or MCMP to the class of policies that suppress the probability of not attaining the terminal state to a desired $\epsilon$. 
Approximations of these problems can be treated as a combination of a Bayesian adaptive MDP (BAMDP) (\cite{Dayan2012}) and a two-person zero-sum game.

In this paper, we first define a constrained SSP problem that considers the task failure probability in Section \ref{sec2}.
In Section \ref{sec3}, it is difficult to directly solve the original problems in Section \ref{sec2}, so we approximate these problems.
We show that the approximate problems can be treated as problems that have the properties of both BAMDP and a two-person zero-sum game, and then show solutions to the problems.
In Section \ref{sec4}, we demonstrate the effectiveness of the proposed methods using a motion planning problem that considers obstacle avoidance for a mobile robot.
We reach our conclusion in Section \ref{sec5}. 
The theorems in Section \ref{sec3} are shown in Appendixes \ref{secA1}, \ref{secA2}, \ref{secA3} and \ref{secA4}. 
%%%%%%%%%%%%%%%%%%%%%%%%%%%%%%%%%%%%%%%%%%%%%%%%%%%%%%%%%%%%%%%%%%%%%%%%%%%%%%%%%%%%%%%%%%%%%%%%%%%%%%%%%%%%%%%%%%%%%%%%%%%%%
\section{Constrained SSP Problem with Failure Probability\label{sec2}}
%%%%%%%%%%%%%%%%%%%%%%%%%%%%%%%%%%%%%%%%%%%%%%%%%%%%%%%%%%%%%%%%%%%%%%%%%%%%%%%%%%%%%%%%%%%%%%%%%%%%%%%%%%%%%%%%%%%%%%%%%%%%%
%===========================================================================================================================%
\subsection{Model\label{sec2-1}}
%===========================================================================================================================%
We consider a problem for a discrete-time finite MDP $\mathcal M$ in which a constraint on the failure probability is added to the SSP as defined in \cite{Bertsekas1996}.
Let $\mathcal X=\{0,1,\cdots,N_X\}$ be a state set and $\mathcal C=\{1,\cdots,N_C\}$ be an action set. 
$\mathcal X$ and $\mathcal C$ are finite sets. 
Let $\mathcal W$ be a disturbance set.
As in \cite{Bertsekas2017}, we assume that $\mathcal W$ is a countable set to avoid complicated arguments based on measure theory.
The variables representing a state, an action and a disturbance at each time $t$ are written as $x_t$, $u_t$ and $w_t$.
The probability that a disturbance $w_{t}$ occurs when an action $u_t$ is selected in a state $x_t$ at time $t$ is written as $p(w_{t}|x_t,u_t)$. 
The state equation is $x_{t+1}=f(x_t,u_t,w_t)$.
The one stage cost is $g(x_t,u_t,w_t)$.
The one stage cost is non-negative and is incurred when a disturbance $w_t$ occurs when an action $u_t$ is selected in a state $x_t$ at time $t$.
When $(x,u)\in\mathcal X\times \mathcal C$ is fixed, the expected value and variance of a one stage cost $g$ with respect to a disturbance $w_{t}$ take finite values.
The total cost from the initial time to the current time $t$ is written as $r_t$.
The state $0$ is a terminal state.
For all actions $u_t$ and disturbances $w_t$, $f(0,u_t,w_t)=0$, and $g(0,u_t,w_t)=0$.
%===========================================================================================================================%
\subsection{Policy\label{sec2-2}}
%===========================================================================================================================%
A policy for $\mathcal M$ is defined as $\pi=(\mu_1,\mu_2,\cdots)$.
Here, $\mu_t(u_t|H_t)$ is a selection probability of an action $u_t$ at the current time $t$ depending on $H_t=(x_0,r_0,u_0,\cdots,x_t,r_t)$. 
$H_t$ is a sequence of states, total costs and actions.
$\Pi$ is a set of policies. 
If $\mu_t$ selects an element in the set $\mathcal C$ with probability 1 at each $t$, then the policy $\pi$ is said to be deterministic.
Then, $\mu_t$ is regarded as a mapping $\mu_t:H_t\mapsto u_t$.
$\Pi_{ms}$ is a set of policies that don't depend on total costs.
$\pi$ is said to be a semi-Markov policy if $\mu_t$ depends only on the set $(x_0,r_0)$ and $(x_t,r_t)$ at each $t$. %, and $\mu_t$ is written as $\mu_t(u_t|x_0,r_0,x_t,r_t)$.
$\pi$ is said to be a Markov policy if $\mu_t$ depends only on the set $(x_t,r_t)$ at each $t$. %, and $\mu_t$ is written as $\mu_t(u_t|x_t,r_t)$.
When $\pi$ is a semi-Markov policy or a Markov policy and is expressed as $\pi=(\mu,\mu,\cdots)$, $\pi$ is called a stationary policy.
A mixed policy is expressed by a probability distribution on a finite set of deterministic policies. 
$\mathcal Z$ is a set of mixed policies.
When applying a mixed policy, we select a deterministic policy according to the probability distribution and apply it.
For every mixed policy $z\in \mathcal Z$, there exists a policy $\pi\in \Pi$ that is equivalent in terms of the distribution of total costs and states at each $t$ for $\mathcal M$.
%===========================================================================================================================%
\subsection{Dead-Ends\label{sec2-3}}
%===========================================================================================================================%
We introduce dead-ends to handle failure events.
Suppose that the probability of reaching the terminal state is 0 when the policy $\pi\in \Pi$ is applied to the initial state $x\in \mathcal X$. 
In this case, $x$ is defined as a dead-end for $\pi$. 
$\mathcal X_{\pi,d}$ is a set of dead-ends for $\pi$.
Suppose that the probability of reaching the terminal state is 0 when an arbitrary policy $\pi$ is applied to the initial state $x$.
In this case, $x$ is defined as a dead-end.
$\mathcal X_{d,all}$ is a set of dead-ends. 
$\mathcal X_{d,all}$ can be derived by solving the maximizing the probability to reach the goal (MAXPROB) problem (\cite{kolobovAAAI2011}).
We solve SSP where the one stage cost is 0 when transitioning to a state other than the terminal state, and $-1$ when transitioning to the terminal state. 
Also, let $\Pi_{d,all}$ be a set of all optimal policies for MAXPROB problem. 
Then, $\mathcal X_{d,all}$ and the set of initial states for which the total cost is 0 when $\pi\in \Pi_{d,all}$ is applied are the same. 
Suppose that there exists a policy $\pi$ such that the probability of reaching the terminal state is 0 for the initial state $x$.
In this case, $x$ is defined as an attention state. 
Let $\mathcal X_{d,or}$ be a set of attention states.
$\mathcal X_{d,or}$ is derived by solving SSP in which the one stage cost is 0 when transitioning to a state other than the terminal state and 1 when transitioning to the terminal state.
We call this problem the miniimizing the probability to reach the goal (MINPROB) problem.
Let $\Pi_{d,or}$ be a set of all optimal policies for the MINPROB problem.
Then, $\mathcal X_{d,or}$ and the set of initial states for which the total cost is 0 when $\pi\in \Pi_{d,or}$ is applied are the same. 
%===========================================================================================================================%
\subsection{Objective Functions\label{sec2-4}}
%===========================================================================================================================%
If SSP $\mathcal M$ has no dead-end, the optimal policy is obtained by optimizing %the expected total cost
\begin{align*}
J(x;\pi)=E^{\mathcal M,\pi}\Bigl\{\sum_{t=0}^{\infty}g(x_t,u_t,x_{t+1})\Bigl|x_0=x\Bigr\}
\end{align*}
for each initial state $x$ (\cite{Bertsekas1996}).
In this paper, in order to deal with dead-ends, we use two types of objective functions, $J_{\mbox{S}^3\mbox{P}}(x;\pi)$ and $J_{\mbox{MCMP}}(x;\pi)$.
%These are functions of initial states $x$ and policies $\pi$. 
$J_{\mbox{S}^3\mbox{P}}(x;\pi)$ is a conditional expectation given the condition for reaching the terminal state when a policy $\pi$ is applied to an initial state $x$.
\begin{align}
J_{\mbox{S}^3\mbox{P}}(x;\pi)=E^{\mathcal M,\pi}\Bigl\{ \sum_{t=0}^{\infty}g(x_t,u_t,x_{t+1})\Bigl|\substack{\exists t=0,1,\cdots,~x_t=0 \\ x_0=x}\Bigr\}\label{JS3P_def}
\end{align}
iSSPUDE or S$^3$P is a problem in which the search range of policies is $\Pi_{d,all}$ for each initial state $x$.
$J_{\mbox{MCMP}}(x;\pi)$ is defined as 
\begin{align}
J_{\mbox{MCMP}}(x;\pi)=E^{\mathcal M,\pi}\Bigl\{ \sum_{t=0}^{\infty}r_{\tau}\Bigl|x_0=x\Bigr\}.\label{JMCMP_def}
\end{align}
Here, $\tau$ is a stochastic variable defined as
\begin{align*}
\tau = \arg \min\{t=0,1,\cdots|\mbox{Pr}(\exists t'\geq t,\cdots,~x_{t'}=0|H_t,\pi)=0\}
\end{align*}
when $\pi$ is applied to an initial state $x$, and a history $H_{\infty}=(x_0,r_0,u_0,x_1,r_1,u_1,\cdots)$ occurs.
MCMP is a problem in which the search range of policies is $\Pi_{d,all}$ for each initial state $x$. 
This differs from $J_{\mbox{S}^3\mbox{P}}(x;\pi)$ in that it also considers the total cost of a failed episode.
%===========================================================================================================================%
\subsection{Constraint\label{sec2-5}}
%===========================================================================================================================%
For an initial state $x\in \mathcal X$ and a positive real number $\epsilon$, we define $\Pi_{x,\epsilon}$ as
\begin{align}
\Pi_{x,\epsilon}=\{\pi\in \Pi~|~\mbox{Pr}(\forall t=0,1,\cdots,~x_t\neq 0|x_0=x,~\pi)\leq \epsilon\}.\label{constrain_condition_origin}
\end{align}
In each initial state $x\in \mathcal X$, we minimize $J_{\mbox{S}^3\mbox{P}}$ or $J_{\mbox{MCMP}}$ with respect to a policy $\pi$ subject to the condition that $\pi \in \Pi_{x,\epsilon}$.
We refer to this problem as the original problem. 
By expanding the policy search range from $\Pi_{d,all}$ to $\Pi_{x,\epsilon}$, it becomes possible to design policies with good performance while allowing a certain degree of failure probability. 
Theorem~\ref{t2-0} indicates the left side of the condition part of (\ref{constrain_condition_origin}) can be calculated by considering it as the average cost per stage.  
%---------------------------------------------------------------------------------------------------------------------------%
\begin{thm}
\label{t2-0}
%---------------------------------------------------------------------------------------------------------------------------%
Given $\pi \in \Pi$, 
\begin{align}
\mbox{Pr}(\forall t=0,1,\cdots,~x_t\neq 0|x_0=x,~\pi)=\lim_{T\rightarrow \infty}E^{\mathcal M,\pi}\Bigl\{\frac{1}{T}\sum_{t=0}^{T-1}h_d(x_t)\Bigl|x_0=x\Bigr\}.\label{constrain_condition_origin1}
\end{align}
Here,  $h_d(x')$ takes 1 if $x'\neq 0$, otherwise 0. 
\end{thm}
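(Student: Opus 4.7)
The plan is to exploit two facts: (i) the terminal state $0$ is absorbing, since $f(0,u,w)=0$ for every $u$ and $w$, and (ii) $h_d$ is simply the indicator of the event $\{x_t\neq 0\}$. So I would reduce the claim to a pathwise identity plus a routine application of dominated convergence.

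First, I would fix an initial state $x$ and a policy $\pi$, and split the underlying probability space into the event $A=\{\forall t\geq 0,\ x_t\neq 0\}$ and its complement $B=A^{c}$. The left-hand side of (\ref{constrain_condition_origin1}) is by definition $\Pr(A\mid x_0=x,\pi)$. On $A$, every $x_t$ is non-terminal, so $h_d(x_t)=1$ for all $t$ and consequently $\tfrac{1}{T}\sum_{t=0}^{T-1}h_d(x_t)=1$ for every finite $T$. On $B$, let $\tau=\min\{t\geq 0:x_t=0\}$, which is finite by definition of $B$; because state $0$ is absorbing, $x_t=0$ and hence $h_d(x_t)=0$ for all $t\geq\tau$. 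Therefore $\sum_{t=0}^{T-1}h_d(x_t)=\min(\tau,T)$, which yields $\tfrac{1}{T}\sum_{t=0}^{T-1}h_d(x_t)\to 0$ as $T\to\infty$. Combining the two cases, the pathwise limit of the time average equals the indicator $\mathbf{1}_{A}$.

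Second, I would take expectations. The sequence of random variables $\tfrac{1}{T}\sum_{t=0}^{T-1}h_d(x_t)$ is uniformly bounded by $1$, so by the dominated convergence theorem
\begin{align*}
\lim_{T\to\infty}E^{\mathcal{M},\pi}\!\left\{\tfrac{1}{T}\sum_{t=0}^{T-1}h_d(x_t)\,\Big|\,x_0=x\right\}
= E^{\mathcal{M},\pi}\!\left\{\mathbf{1}_{A}\,\big|\,x_0=x\right\}
= \Pr(A\mid x_0=x,\pi),
\end{align*}
which is exactly the left-hand side of (\ref{constrain_condition_origin1}).

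No step is really an obstacle; the proof is essentially a bookkeeping argument. The only points worth stating carefully are (a) the absorbing property of the terminal state, which ensures that once $x_t$ hits $0$ the indicator $h_d(x_t)$ stays $0$ forever, and (b) the uniform bound $|\tfrac{1}{T}\sum h_d(x_t)|\leq 1$ that justifies the interchange of limit and expectation. Both are immediate from the model definition in Section~\ref{sec2-1}.
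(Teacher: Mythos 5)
Your proof is correct and follows essentially the same route as the paper's: compute the pathwise limit of the time average (which equals the indicator of never reaching the terminal state, using the absorbing property of state $0$), then interchange limit and expectation via dominated convergence with the uniform bound $1$. The only cosmetic difference is that you phrase the transient case via the hitting time $\tau$ and the identity $\sum_{t=0}^{T-1}h_d(x_t)=\min(\tau,T)$, whereas the paper writes the same quantity as $t/T$ for $T>t$; the substance is identical.
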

The original problem is difficult to solve because it deals with the expected total cost and the time average of costs at the same time.
Using $\gamma\in (0,1)$, $L_d^{\gamma}(x;\pi)$ and $\hat{\Pi}^{\gamma}_{x,\epsilon}$, we approximate (\ref{constrain_condition_origin1}) and $\Pi_{x,\epsilon}$ defined as
\begin{align}
&L_d^{\gamma}(x;\pi)=E^{\mathcal M,\pi}\Bigl\{\sum_{t=0}^{\infty}\gamma^t h_d^{(\gamma)}(x_t)\Bigl|x_0=x\Bigr\}\label{constrain_condition_approximate}
\end{align}
and 
\begin{align}
&\hat{\Pi}^{\gamma}_{x,\epsilon}=\{\pi\in \Pi|L_d^{\gamma}(x;\pi)\leq \epsilon\}.\label{constrain_condition_approximate_set}
\end{align}
Here, $h_d^{(\gamma)}(x')=(1-\gamma)h_d(x')$ for all $x'\in \mathcal X$. 
Both the objective function and the approximation of the constraint are expressed in the form of the expected total cost. 
Therefore, the approximation problem is easier to solve than the original problem. 
Theorem~\ref{t2-1} indicates that (\ref{constrain_condition_approximate}) becomes a sufficiently accurate approximation of (\ref{constrain_condition_origin1}) when $\gamma$ is sufficiently close to 1.
Theorem~\ref{t2-1} is shown in the same way as Theorem~4.1 of \cite{otsuboSSS23} and the discussion in \cite{Blackwell1962}.
%---------------------------------------------------------------------------------------------------------------------------%
\begin{thm}
\label{t2-1}
%---------------------------------------------------------------------------------------------------------------------------%
For all $\pi\in \Pi$, 
\begin{align}
\lim_{\gamma \uparrow 1}L_d^{\gamma}(x;\pi)=\mbox{Pr}(\forall t=0,1,\cdots,~x_t\neq 0|x_0=x,~\pi).\label{thm_2_1}
\end{align}
\end{thm}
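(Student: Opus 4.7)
The plan is to recognize $L_d^{\gamma}(x;\pi)$ as the Abel mean of the sequence $a_t := \mbox{Pr}(x_t\neq 0\mid x_0=x,\pi)$ and to exploit that $a_t$ is monotone nonincreasing with a limit equal to the right-hand side of (\ref{thm_2_1}); then an Abel-type argument closes the proof. Concretely, since $h_d^{(\gamma)}$ is nonnegative I would first interchange expectation and summation (Tonelli), writing
\begin{align*}
L_d^{\gamma}(x;\pi)=(1-\gamma)\sum_{t=0}^{\infty}\gamma^{t}\,E^{\mathcal M,\pi}\{h_d(x_t)\mid x_0=x\}=(1-\gamma)\sum_{t=0}^{\infty}\gamma^{t}a_t,
\end{align*}
where $a_t=\mbox{Pr}(x_t\neq 0\mid x_0=x,\pi)\in[0,1]$.

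Next I would show $a_t$ is nonincreasing in $t$ and identify its limit. Because state $0$ is terminal (so $f(0,u,w)=0$), the event $\{x_{t+1}\neq 0\}$ is a subset of $\{x_t\neq 0\}$, so $\{x_t\neq 0\}_{t\ge 0}$ is a decreasing sequence of events. By continuity of probability from above,
\begin{align*}
\lim_{t\to\infty}a_t=\mbox{Pr}\Bigl(\bigcap_{t=0}^{\infty}\{x_t\neq 0\}\,\Bigm|\,x_0=x,\pi\Bigr)=\mbox{Pr}(\forall t=0,1,\cdots,~x_t\neq 0\mid x_0=x,\pi)=:a_{\infty}.
\end{align*}
This identifies the target limit and, as a by-product, reconciles the claim with Theorem~\ref{t2-0}, since a convergent sequence has the same Cesàro limit.

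The final step is the Abel-type estimate. Using $(1-\gamma)\sum_{t=0}^{\infty}\gamma^{t}=1$, I would write
\begin{align*}
L_d^{\gamma}(x;\pi)-a_{\infty}=(1-\gamma)\sum_{t=0}^{\infty}\gamma^{t}(a_t-a_{\infty}),
\end{align*}
fix $\eps>0$, pick $T$ with $|a_t-a_{\infty}|<\eps$ for $t\ge T$, and split the sum at $T$. The tail is bounded by $\eps\cdot(1-\gamma)\sum_{t\ge T}\gamma^{t}\le\eps$, while the head is bounded by $2(1-\gamma)T$, which tends to $0$ as $\gamma\uparrow 1$. Letting $\gamma\uparrow 1$ and then $\eps\downarrow 0$ yields $\lim_{\gamma\uparrow 1}L_d^{\gamma}(x;\pi)=a_{\infty}$, which is exactly (\ref{thm_2_1}).

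The only nontrivial point is the monotonicity of $a_t$, which relies crucially on $0$ being absorbing; everything else is the standard Abelian theorem (as alluded to by the reference to \cite{Blackwell1962}). No measure-theoretic subtlety arises because $\mathcal X$ and $\mathcal W$ are countable, so the Tonelli exchange and the continuity-from-above argument are immediate.
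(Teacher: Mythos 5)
Your proof is correct and follows essentially the same route as the paper's: both reduce the claim to the fact that the Abel (discounted) mean of $a_t=\mbox{Pr}(x_t\neq 0\,|\,x_0=x,\pi)$ converges to $\lim_{t\to\infty}a_t$, which is identified with the right-hand side of (\ref{thm_2_1}) because the terminal state $0$ is absorbing and the events $\{x_t\neq 0\}$ decrease. The only difference is presentational --- you invoke the standard Abelian theorem with an $\epsilon$--$T$ split, whereas the paper obtains the same two-sided bounds by conditioning directly on the event $\{x_N\neq 0\}$ --- so nothing substantive is gained or lost either way.
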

Theorem~\ref{t2-2} indicates that the policy obtained by solving the approximate problem is within the original search range $\Pi_{x,\epsilon}$. 
Theorem~\ref{t2-2} is proved by using $L_d^{\gamma}(x;\pi)\geq \mbox{Pr}(\forall t=0,1,\cdots,~x_t\neq 0|x_0=x,~\pi)$ in the same way as Theorem~4.2 of \cite{otsuboSSS23}.
%---------------------------------------------------------------------------------------------------------------------------%
\begin{thm}
\label{t2-2}
%---------------------------------------------------------------------------------------------------------------------------%
For all $\gamma\in (0,1)$ and $x\in \mathcal X$, $\hat{\Pi}^{\gamma}_{x,\epsilon}\subset \Pi_{x,\epsilon}$.
\end{thm}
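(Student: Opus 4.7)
The plan is to chain two inequalities to obtain the claimed inclusion. Given any $\pi\in \hat{\Pi}^{\gamma}_{x,\epsilon}$, the definition (\ref{constrain_condition_approximate_set}) gives $L_d^{\gamma}(x;\pi)\leq \epsilon$ outright, so it suffices to establish the pointwise lower bound
\begin{align*}
L_d^{\gamma}(x;\pi)\geq \mbox{Pr}(\forall t=0,1,\cdots,~x_t\neq 0\mid x_0=x,~\pi),
\end{align*}
which is the hint indicated in the paper. Combining the two inequalities immediately places $\pi$ inside $\Pi_{x,\epsilon}$ as defined in (\ref{constrain_condition_origin}).

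To prove the key lower bound, I would introduce the first-passage time $T=\inf\{t\geq 0:x_t=0\}$, with the convention $T=\infty$ on the event $\{\forall t,~x_t\neq 0\}$. Because state $0$ is absorbing and $h_d(0)=0$, it holds almost surely that $h_d(x_t)=\mathbf{1}\{t<T\}$. Substituting this into (\ref{constrain_condition_approximate}), using $h_d^{(\gamma)}=(1-\gamma)h_d$, and applying Tonelli's theorem to interchange expectation and sum (legitimate because the summand is nonnegative and bounded by $(1-\gamma)\gamma^t$, yielding a geometric envelope), I would obtain
\begin{align*}
L_d^{\gamma}(x;\pi)=E^{\mathcal M,\pi}\bigl\{(1-\gamma^T)\mathbf{1}\{T<\infty\}+\mathbf{1}\{T=\infty\}\bigm| x_0=x\bigr\},
\end{align*}
via the closed form $\sum_{t=0}^{T-1}\gamma^t=(1-\gamma^T)/(1-\gamma)$ for finite $T$, interpreted as $1/(1-\gamma)$ when $T=\infty$.

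Since $1-\gamma^T\geq 0$ for every finite $T$, dropping that first term inside the expectation yields
\begin{align*}
L_d^{\gamma}(x;\pi)\geq E^{\mathcal M,\pi}\bigl\{\mathbf{1}\{T=\infty\}\bigm| x_0=x\bigr\}=\mbox{Pr}(\forall t,~x_t\neq 0\mid x_0=x,~\pi),
\end{align*}
which is exactly the required inequality. Chaining with $L_d^{\gamma}(x;\pi)\leq \epsilon$ completes the proof.

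I do not expect a significant obstacle; the argument is essentially a rewriting of $L_d^{\gamma}$ in terms of the hitting time $T$, with the only technical point being the Tonelli justification for exchanging expectation and the infinite sum. The monotonicity $L_d^{\gamma}\geq \mbox{Pr}(T=\infty\mid\cdot)$ is then evident from the decomposition above.
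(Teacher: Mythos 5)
Your proof is correct and follows essentially the same route as the paper: the identity $L_d^{\gamma}(x;\pi)=\mbox{Pr}(T=\infty)+E\{(1-\gamma^{T})\mathbf{1}\{T<\infty\}\}$ is exactly the decomposition $\mbox{Pr}(\forall t,\,x_t\neq 0)+\sum_{N}\hat{p}_N(1-\gamma^{N})$ used in Appendix A.3, and the key inequality $L_d^{\gamma}\geq \mbox{Pr}(\forall t,\,x_t\neq 0)$ is the one the paper itself cites. The only cosmetic difference is that you drop the nonnegative remainder term directly, whereas the paper phrases it as monotonicity of $L_d^{\gamma}$ in $\gamma$ combined with the limit from Theorem~\ref{t2-1}.
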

%%%%%%%%%%%%%%%%%%%%%%%%%%%%%%%%%%%%%%%%%%%%%%%%%%%%%%%%%%%%%%%%%%%%%%%%%%%%%%%%%%%%%%%%%%%%%%%%%%%%%%%%%%%%%%%%%%%%%%%%%%%%%
\section{Solution to Approximate Problems\label{sec3}}
%%%%%%%%%%%%%%%%%%%%%%%%%%%%%%%%%%%%%%%%%%%%%%%%%%%%%%%%%%%%%%%%%%%%%%%%%%%%%%%%%%%%%%%%%%%%%%%%%%%%%%%%%%%%%%%%%%%%%%%%%%%%%
%===========================================================================================================================%
\subsection{MDPs for Objective Function and Constraint\label{sec3-1}}
%===========================================================================================================================%
To solve the approximation problem, we first construct MDP corresponding to $J_{\mbox{S}^3\mbox{P}}$ or $J_{\mbox{MCMP}}$ and another MDP corresponding to the approximation of the constraint.
When the objective function is $J_{\mbox{S}^3\mbox{P}}$, the MDPs corresponding to the objective function and the approximation of the constraints are written as $\mathcal M^{(S)}_{o}$ and $\mathcal M^{(S)}_{d,\gamma}$, respectively.
Similarly, when the objective function is $J_{\mbox{MCMP}}$, they are written as $\mathcal M^{(M)}_{o}$ and $\mathcal M^{(M)}_{d,\gamma}$.
%+++++++++++++++++++++++++++++++++++++++++++++++++++++++++++++++++++++++++++++++++++++++++++++++++++++++++++++++++++++++++++%
\subsubsection{Definitions of $\mathcal M^{(S)}_{o}$ and $\mathcal M^{(S)}_{d,\gamma}$}
%+++++++++++++++++++++++++++++++++++++++++++++++++++++++++++++++++++++++++++++++++++++++++++++++++++++++++++++++++++++++++++%
We define the new MDP $\mathcal M^{(S)}_{o}$.
$\mathcal X^{(S)}_o=\mathcal X\times [0,\infty)\cup \{*\}$ is a new state set.
Here, $*$ is an artificial terminal state.
A new state in $\mathcal M^{(S)}_{o}$ is composed of a state and a total cost in the original MDP $\mathcal M$. 
The new action set $\mathcal C^{(S)}_o$ is the same as $\mathcal C$.
$\mathcal W^{(S)}_o=\mathcal W\cup \{*\}$ is a new disturbance set.
The probability $p^{(S)}_o(w'_{t}|x'_t,u'_t)$ with which a disturbance $w'_{t}\in \mathcal W^{(S)}_o$ will occur when an action $u'_t\in \mathcal C^{(S)}_o$ is selected in a state $x'_t\in \mathcal X^{(S)}_o$ at time $t$ is defined as follows.
$p^{(S)}_o(w'_{t}|x'_t,u'_t)=p(w'_{t}|x_t,u'_t)$ if $x'_t=(x_t,r_t)\in \mathcal X\times [0,\infty)\wedge w'_t\in \mathcal W$, and $p^{(S)}_o(*|x'_t,u'_t)=1$ if $x'_t=*$. 
The state equation $x'_{t+1}=F^{(S)}_o(x'_t,u'_t,w'_{t})$ is defined as follows.
If $x'_t=(x_t,r_t)\in \mathcal X\times [0,\infty)$,
\begin{align*}
x'_{t+1}=(x_{t+1},r_{t+1})&=F^{(S)}_o((x_t,r_t),u'_t,w'_{t})\\
                               &=(f(x_t,u_t,w'_t),r_t+g(x_t,u_t,w'_t))~\forall w'_t\in \mathcal W.
\end{align*}
If $x'_t=*$, $F^{(S)}_o(x'_t,u'_t,*)=*$.
The one stage cost $G^{(S)}_o(x'_t,u'_t,w'_t)$ is defined as follows. 
$G^{(S)}_o(x'_t,u'_t,w'_t)=r_t+g(x_t,u_t,w'_t)$ if $x'_t=(x_t,r_t)\in \mathcal X\setminus \{0\}\times [0,\infty)\wedge f(x_t,u'_t,w'_t)=0$, otherwise $G^{(S)}_o(x'_t,u'_t,w'_t)=0$.
When $\pi\in \Pi$ is applied to the initial state $x'\in \mathcal X^{(S)}_o$, the expected total cost in $\mathcal M^{(S)}_{o}$ is expressed as
\begin{align*}
J^{(S)}_{obj}(x';\pi)=E^{\mathcal M^{(S)}_{o},\pi}\Bigl\{\sum_{t=0}^{\infty}G^{(S)}_o(x'_t,u'_t,w'_t)\Bigl|x'_0=x'\Bigr\}.
\end{align*}
$\mathcal M^{(S)}_{o}$ has an infinite number of states. 
When $\pi$ is applied to the initial state $x$ in $\mathcal M$, 
\begin{align}
J_{\mbox{S}^3\mbox{P}}(x;\pi)=\frac{1}{1-\epsilon_{x,\pi}}J^{(S)}_{obj}((x,0);\pi).\label{s3p_gamma_j_obj}
\end{align}
Here, $\epsilon_{x,\pi}$ is the probability of not reaching the terminal state when $\pi$ is applied to the initial state $x$ in $\mathcal M$.
In (\ref{s3p_gamma_j_obj}), we approximate $J_{\mbox{S}^3\mbox{P}}$ to 
\begin{align}
J^{\gamma}_{\mbox{S}^3\mbox{P}}(x;\pi)=\frac{1}{1-L_d^{\gamma}(x;\pi)}J^{(S)}_{obj}((x,0);\pi).\label{s3p_gamma_j_obj_appro}
\end{align}
In Theorem~\ref{t3-0}, we show that $J^{\gamma}_{\mbox{S}^3\mbox{P}}$ is a sufficiently good approximation of $J_{\mbox{S}^3\mbox{P}}$ if $\gamma$ is close enough to 1.
Theorem~\ref{t3-0} also proposes that $J^{\gamma}_{\mbox{S}^3\mbox{P}}$ is an upper bound of $J_{\mbox{S}^3\mbox{P}}$ if $\pi\in \hat{\Pi}^{\gamma}_{x,\epsilon}$. 
Theorem~\ref{t3-0} is derived from Theorems \ref{t2-1} and \ref{t2-2}.
%---------------------------------------------------------------------------------------------------------------------------%
\begin{thm}
\label{t3-0}
%---------------------------------------------------------------------------------------------------------------------------%
For $x\in \mathcal X$ and $\pi \in \hat{\Pi}^{\gamma}_{x,\epsilon}$, 
\begin{align}
J_{\mbox{S}^3\mbox{P}}(x;\pi)=\lim_{\gamma \uparrow 1}J^{\gamma}_{\mbox{S}^3\mbox{P}}(x;\pi).\label{limitation_J_sp_gamma}
\end{align}
For all $x\in \mathcal X$, $\gamma \in (0,1)$ and $\pi\in \hat{\Pi}^{\gamma}_{x,\epsilon}$, then, 
\begin{align}
J_{\mbox{S}^3\mbox{P}}(x;\pi)\leq J^{\gamma}_{\mbox{S}^3\mbox{P}}(x;\pi).\label{geq_J_sp_gamma}
\end{align}
\end{thm}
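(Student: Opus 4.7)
The plan is to exploit the fact that $J^{\gamma}_{\mbox{S}^3\mbox{P}}(x;\pi)$ and $J_{\mbox{S}^3\mbox{P}}(x;\pi)$ share the same numerator $J^{(S)}_{obj}((x,0);\pi)$, which does not depend on $\gamma$, and differ only in the denominators $1-L_d^{\gamma}(x;\pi)$ versus $1-\epsilon_{x,\pi}$. Both assertions then reduce to statements relating these denominators, for which Theorems~\ref{t2-1} and~\ref{t2-2} (and in particular the pointwise bound $L_d^{\gamma}(x;\pi)\geq \epsilon_{x,\pi}$ that underlies the proof of Theorem~\ref{t2-2}) give exactly what is needed.

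First I would verify that both denominators are strictly positive so that the ratios are well defined. Since $\pi \in \hat{\Pi}^{\gamma}_{x,\epsilon}$, the definition (\ref{constrain_condition_approximate_set}) gives $L_d^{\gamma}(x;\pi)\leq \epsilon<1$, and the inequality $L_d^{\gamma}(x;\pi)\geq \epsilon_{x,\pi}$ noted in the proof of Theorem~\ref{t2-2} then yields $\epsilon_{x,\pi}\leq \epsilon<1$ as well. I would also record that $J^{(S)}_{obj}((x,0);\pi)\geq 0$, which is immediate from the non-negativity of $g$ and the definition of $G^{(S)}_o$.

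Next, for the limit assertion (\ref{limitation_J_sp_gamma}), I would apply Theorem~\ref{t2-1} to conclude $L_d^{\gamma}(x;\pi)\to \epsilon_{x,\pi}$ as $\gamma \uparrow 1$. Because $1-\epsilon_{x,\pi}>0$, the map $y\mapsto 1/(1-y)$ is continuous there, so
\begin{align*}
\lim_{\gamma \uparrow 1} J^{\gamma}_{\mbox{S}^3\mbox{P}}(x;\pi)
 = \lim_{\gamma \uparrow 1} \frac{J^{(S)}_{obj}((x,0);\pi)}{1-L_d^{\gamma}(x;\pi)}
 = \frac{J^{(S)}_{obj}((x,0);\pi)}{1-\epsilon_{x,\pi}}
 = J_{\mbox{S}^3\mbox{P}}(x;\pi),
\end{align*}
using (\ref{s3p_gamma_j_obj}) at the final step. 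For the inequality (\ref{geq_J_sp_gamma}), I would combine $L_d^{\gamma}(x;\pi)\geq \epsilon_{x,\pi}$ with the strict positivity of both denominators to get $1/(1-L_d^{\gamma}(x;\pi))\geq 1/(1-\epsilon_{x,\pi})$, and then multiply by the non-negative numerator $J^{(S)}_{obj}((x,0);\pi)$.

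The only genuine subtlety, and hence the main obstacle, is checking the identity (\ref{s3p_gamma_j_obj}) itself, i.e. that $J^{(S)}_{obj}((x,0);\pi)=(1-\epsilon_{x,\pi})\,J_{\mbox{S}^3\mbox{P}}(x;\pi)$; this requires unpacking the construction of $\mathcal M^{(S)}_{o}$ and noting that $G^{(S)}_o$ is nonzero only on the single transition into the terminal state, so that the infinite sum in $J^{(S)}_{obj}$ restricted to trajectories that actually reach the terminal state equals $\sum_{t\geq 0} g(x_t,u_t,x_{t+1})$, and trajectories that never reach it contribute zero. Once this structural identity is in hand, everything reduces to the elementary monotonicity and continuity of $y\mapsto 1/(1-y)$ on $[0,\epsilon]$ together with Theorems~\ref{t2-1} and~\ref{t2-2}.
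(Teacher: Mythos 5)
Your proof is correct and follows essentially the same route the paper intends: the paper itself only remarks that Theorem~\ref{t3-0} ``is derived from Theorems \ref{t2-1} and \ref{t2-2},'' and your argument---combining the limit $L_d^{\gamma}(x;\pi)\to\epsilon_{x,\pi}$ from Theorem~\ref{t2-1} with the bound $L_d^{\gamma}(x;\pi)\geq\epsilon_{x,\pi}$ underlying Theorem~\ref{t2-2}, plus continuity and monotonicity of $y\mapsto 1/(1-y)$ on $[0,\epsilon]$---is exactly that derivation, spelled out with the appropriate well-definedness checks on the denominators.
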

We define the new MDP $\mathcal M^{(S)}_{d,\gamma}$. 
The state set $\mathcal X^{(S)}_{d,\gamma}$, the action set $\mathcal C^{(S)}_{d,\gamma}$ and the disturbance set $\mathcal W^{(S)}_{d,\gamma}$ are the same as $\mathcal X^{(S)}_o$, $\mathcal C^{(S)}_o$ and $\mathcal W^{(S)}_o$.
The probability $p^{(S)}_{d,\gamma}(w'_{t}|x'_t,u'_t)$ with which a disturbance $w'_{t}\in \mathcal W^{(S)}_{d,\gamma}$ will occur when an action $u'_t\in \mathcal C^{(S)}_{d,\gamma}$ is selected in a state $x'_t\in \mathcal X^{(S)}_{d,\gamma}$ at time $t$ is defined as follows. 
If $x'_t=(x_t,r_t)\in \mathcal X\times [0,\infty)$, 
\begin{align*}
p^{(S)}_{d,\gamma}(w'_{t}|(x_t,r_t),u'_t)=
\left\{
\begin{array}{ll}
\gamma p^{(S)}_o(w'_{t}|(x_t,r_t),u'_t) & (w'_{t}\in \mathcal W) \\
1-\gamma & (\mbox{otherwise}) \\
\end{array}
\right..
\end{align*}
If $x'_t=*$, $p^{(S)}_{d,\gamma}(*|x'_t,u'_t)=1$.
The state equation $F^{(S)}_{d,\gamma}$ is the same as $F^{(S)}_o$.
We define the one stage cost $G^{(S)}_{d,\gamma}(x'_t,u'_t,w'_t)$ as follows.
$G^{(S)}_{d,\gamma}(x'_t,u'_t,w'_t)=h_d^{(\gamma)}(x_t)$ if $x'_t=(x_t,r_t)\in \mathcal X\times [0,\infty)$, otherwise $G^{(S)}_{d,\gamma}(x'_t,u'_t,w'_t)=0$. 
When $\pi\in \Pi$ is applied to the initial state $x'\in \mathcal X^{(S)}_{d,\gamma}$, the expected total cost in $\mathcal M^{(S)}_{d,\gamma}$ is expressed as 
\begin{align*}
J_{cond}^{(S)}(x';\pi)=E^{\mathcal M^{(S)}_{d,\gamma},\pi}\Bigl\{\sum_{t=0}^{\infty}G^{(S)}_{d,\gamma}(x'_t,u'_t,w'_t)\Bigl|x'_0=x'\Bigr\}.
\end{align*}
When $\pi\in \Pi$ is applied to the initial state $x\in \mathcal X$ in $\mathcal M$, $J_{cond}^{(S)}((x,0);\pi)=L_d^{\gamma}(x;\pi)$. 
%+++++++++++++++++++++++++++++++++++++++++++++++++++++++++++++++++++++++++++++++++++++++++++++++++++++++++++++++++++++++++++%
\subsubsection{Definitions of $\mathcal M^{(M)}_{o}$ and $\mathcal M^{(M)}_{d,\gamma}$}
%+++++++++++++++++++++++++++++++++++++++++++++++++++++++++++++++++++++++++++++++++++++++++++++++++++++++++++++++++++++++++++%
We define the new MDP $\mathcal M^{(M)}_{o}$.
$\mathcal X^{(M)}_o=\mathcal X\times \{s,f\}\cup \{*\}$ is a new state set.
Here, $s$ and $f$ are labels indicating whether the task has failed, and $*$ is an artificial terminal state.
$\mathcal C^{(M)}_o=\mathcal C\times \{s,f\}$ is a new action set, and $\mathcal W^{(M)}_o=\mathcal W\cup\{*\}$ is a new disturbance set.
The probability $p^{(M)}_o(w'_{t}|x'_t,u'_t)$ with which a disturbance $w'_{t}\in \mathcal W^{(M)}_o$ will occur when an action $u'_t\in \mathcal C^{(M)}_o$ is selected in a state $x'_t\in \mathcal X^{(M)}_o$ at time $t$ is defined as follows.
$p^{(M)}_o(w'_{t}|(x_t,i_t),(u_t,j_t))=p(w'_{t}|x_t,u_t)$ if $x'_t=(x_t,i_t)\in \mathcal X\times \{s,f\} \wedge u'_t=(u_t,j_t)\in \mathcal C^{(M)}_o$.
$p^{(M)}_o(*|x'_t,u'_t)=1$ if $x'_t=*$.
The state equation $x'_{t+1}=F^{(M)}_o(x'_t,u'_t,w'_{t})$ is defined as follows.
If $x'_t=(x_t,i_t)\in \mathcal X\times \{s,f\}$ and $u'_t=(u_t,j_t)\in \mathcal C^{(M)}_o$,
\begin{align*}
x'_{t+1}=(x_{t+1},i_{t+1})=F^{(M)}_o((x_t,i_t),(u_t,j_t),w'_{t})=(f(x_t,u_t,w'_t),\theta(i_t,j_t))~\forall w'_t\in \mathcal W. 
\end{align*}
Here, $\theta(i,j)=s$ if $i=s\wedge j=s$, otherwise $\theta(i,j)=f$.
If $x'_t=*$, $F^{(M)}_o(x'_t,u'_t,*)=*$.
The one stage cost $G^{(M)}_o(x'_t,u'_t,w'_t)$ is defined as follows.
$G^{(M)}_o(x'_t,u'_t,w'_t)=g(x_t,u_t,w'_t)$ if $x'_t=(x_t,s)\wedge u'_t=(u_t,j_t)\in \mathcal C\times \{s,f\}$, $G^{(M)}_o(x'_t,u'_t,w'_t) = 1$ if $x'_t=(x_t,f)\wedge x_t=0$, otherwise $G^{(M)}_o(x'_t,u'_t,w'_t) = 0$. 
When $\pi\in \Pi$ is applied to the initial state $x'\in \mathcal X^{(M)}_o$, the expected total cost in $\mathcal M^{(M)}_{o}$ is expressed as
\begin{align*}
J^{(M)}_{obj}(x';\pi)=E^{\mathcal M^{(M)}_{o},\pi}\Bigl\{\sum_{t=0}^{\infty}G^{(M)}_o(x'_t,u'_t,w'_t)\Bigl|x'_0=x'\Bigr\}.
\end{align*}
When $\pi\in \Pi$ is applied to the initial state $x\in \mathcal X$ in $\mathcal M$, $J_{\mbox{MCMP}}(x;\pi)=J^{(M)}_{obj}((x,s);\pi)$. 

We define the new MDP $\mathcal M^{(M)}_{d,\gamma}$.
The state set $\mathcal X^{(M)}_{d,\gamma}$, the action set $\mathcal C^{(M)}_{d,\gamma}$ and the disturbance set $\mathcal W^{(M)}_{d,\gamma}$ are the same as $\mathcal X^{(M)}_o$, $\mathcal C^{(M)}_o$ and $\mathcal W^{(M)}_o$.
The probability $p^{(M)}_{d,\gamma}(w'_{t}|x'_t,u'_t)$ with which a disturbance $w'_{t}\in \mathcal W^{(M)}_{d,\gamma}$ will occur when an action $u'_t\in \mathcal C^{(M)}_{d,\gamma}$ is selected in a state $x'_t\in \mathcal X^{(M)}_{d,\gamma}$ at time $t$ is defined as follows.
If $x'_t=(x_t,i_t)\in \mathcal X\times \{s,f\}$, 
\begin{align*}
p^{(M)}_{d,\gamma}(w'_{t}|(x_t,i_t),u'_t)=
\left\{
\begin{array}{ll}
\gamma p^{(M)}_o(w'_{t}|(x_t,i_t),u'_t) & (w'_{t}\in \mathcal W) \\
1-\gamma & (\mbox{otherwise}) \\
\end{array}
\right..
\end{align*}
If $x'_t=*$, $p^{(M)}_{d,\gamma}(*|x'_t,u'_t)=1$.
The state equation $F^{(M)}_{d,\gamma}$ is the same as $F^{(M)}_o$.
The one stage cost $G^{(M)}_{d,\gamma}(x'_t,u'_t,w'_t)$ is defined as follows. 
$G^{(M)}_{d,\gamma}(x'_t,u'_t,w'_t)=h_d^{(\gamma)}(x_t)$ if $x'_t=(x_t,i_t)\in \mathcal X\times \{s,f\}$, otherwise $G^{(M)}_{d,\gamma}(x'_t,u'_t,w'_t)=0$. 
When $\pi\in \Pi$ is applied to the initial state $x'\in \mathcal X^{(M)}_{d,\gamma}$, the expected total cost in $\mathcal M^{(M)}_{d,\gamma}$ is expressed as
\begin{align*}
J_{cond}^{(M)}(x';\pi)=E^{\mathcal M^{(M)}_{d,\gamma},\pi}\Bigl\{\sum_{t=0}^{\infty}G^{(M)}_{d,\gamma}(x'_t,u'_t,w'_t)\Bigl|x'_0=x'\Bigr\}.
\end{align*}
When $\pi\in \Pi$ is applied to the initial state $x\in \mathcal X$ in $\mathcal M$, $J_{cond}^{(M)}((x,s);\pi)=L_d^{\gamma}(x;\pi)$. 
%===========================================================================================================================%
\subsection{Two-Person Zero-Sum Game Parameterized by $c$, $\epsilon$ and $\gamma$\label{sec3-2}}
%===========================================================================================================================%
We construct a two-person zero-sum game for the two MDPs corresponding to the objective function and the constraint in Section \ref{sec3-1}.
As in \cite{otsuboSSS23}, the optimal policy for the approximation problem can be derived by appropriately setting the positive parameters $c$, $\epsilon$ and $\gamma$.
In this section, since many things are the same when using $J^{\gamma}_{\mbox{S}^3\mbox{P}}$ and when using $J_{\mbox{MCMP}}$, we omit the superscript of the symbols, ${}^{(S)}$ and ${}^{(M)}$, used in Section \ref{sec3-1}.
The state set, the action set and the disturbance set of $\mathcal M_o$ and those of $\mathcal M_{d,\gamma}$ are identical. 
Therefore, these are expressed as $\mathcal X'$, $\mathcal C'$ and $\mathcal W'$. 
Results related to $J^{\gamma}_{\mbox{S}^3\mbox{P}}$ are described as (S), and results related to $J_{\mbox{MCMP}}$ are described as (M). 
%If necessary, we write (S) or (M) in the upper right corner of symbols. 

Let $\mathcal A$ be a set of probability distributions defined on the set $\{\mathcal M_o,\mathcal M_{d,\gamma}\}$.
We define $J_{c,\gamma,\epsilon}:\mathcal X'\times \mathcal A\times \Pi\rightarrow \mathbb{R}$ as 
\begin{align}
J_{c,\gamma,\epsilon}(x',\alpha,\pi)=\alpha(\mathcal M_o)J_{obj}(x';\pi) + \frac{\eta(\epsilon;c)}{\epsilon}\alpha(\mathcal M_{d,\gamma})J_{cond}(x';\pi).\label{j_c_gamma_def}
\end{align}
Here, $\eta(\epsilon;c)=c(1-\epsilon)$ in the case of (S), and $\eta(\epsilon;c)=c$ in the case of (M).
Also, we define $J^*_{c,\gamma,\epsilon}:\mathcal X'\times \mathcal A\rightarrow \mathbb{R}$ as 
\begin{align}
J^*_{c,\gamma,\epsilon}(x',\alpha)=\min_{\pi\in \Pi}J_{c,\gamma,\epsilon}(x',\alpha,\pi).\label{optimal_J_c_gamma_epsi}
\end{align}
The existance of a policy $\pi\in \Pi$ which minimizes $J_{c,\gamma,\epsilon}(x',\alpha,\pi)$ is guaranteed by proofs of theorems \ref{t3-3} and \ref{t3-4} in Appendix~\ref{secA2}.

We fix the initial state $x\in \mathcal X$.
Then, $x'=(x,0)$ in the case of (S), or $x'=(x,s)$ in the case of (M).
Given positive real numbers $c$, $\epsilon$ and $\gamma$, we consider the problem of deriving the policy $\pi^*_{c,\gamma,\epsilon}$ that satisfies 
\begin{align}
\pi^*_{c,\gamma,\epsilon}\in \arg \min_{\pi\in \Pi}\Bigl\{\max_{\alpha\in \mathcal A}J_{c,\gamma,\epsilon}(x',\alpha,\pi)\Bigr\}.\label{two_zero_sum_game_j_s_gamma}
\end{align}
We define $c_{\gamma,\epsilon}^*(x)$ as
\begin{align}
c_{\gamma,\epsilon}^*(x)=\arg \min\Bigl\{c\in [0,\infty)\Bigl|\min_{\pi\in \Pi}\max_{\alpha\in \mathcal A}J_{c,\gamma,\epsilon}(x',\alpha,\pi)\leq \eta(\epsilon;c)\Bigr\}.\label{def_c*}
\end{align}
Theorems~\ref{t3-1} and \ref{t3-2} pertain to the nature of the optimal policy and the relationship between $J^*_{c,\gamma,\epsilon}$ and the objective function.
From Theorems~\ref{t2-2}, \ref{t3-0}, \ref{t3-1} and \ref{t3-2}, it is derived that $\pi^*_{c,\gamma,\epsilon}$ is a sufficiently accurate approximation to the optimal policy for the original problem if $c$, $\epsilon$ and $\gamma$ are determined appropriately.
%---------------------------------------------------------------------------------------------------------------------------%
\begin{thm}
\label{t3-1}
%---------------------------------------------------------------------------------------------------------------------------%
In the case of (S), 
\begin{align}
\min_{\pi\in \hat{\Pi}^{\gamma}_{x,\epsilon}}J^{\gamma}_{\mbox{S}^3\mbox{P}}(x,\pi)=\min_{\epsilon'\in (0,\epsilon]}c_{\gamma,\epsilon'}^*(x).\label{s3p_c*}
\end{align}
%Let $\epsilon^*$ be $\epsilon'$ that attains the minimum value of (\ref{s3p_c*}).
We define $\epsilon^*\in \arg \min_{\epsilon'\in (0,\epsilon]}c_{\gamma,\epsilon'}^*(x)$. 
Then, for $x'=(x,0)$, 
\begin{align}
\min_{\pi\in \hat{\Pi}^{\gamma}_{x,\epsilon}}J^{\gamma}_{\mbox{S}^3\mbox{P}}(x,\pi)=\frac{1}{1-\epsilon^*}J^{(S)}_{obj}(x',\pi^*_{c_{\epsilon^*}^*(x),\gamma,\epsilon^*}).\label{s3p_c*-2}
\end{align}
\end{thm}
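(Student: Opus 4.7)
The plan is to exploit the fact that $\mathcal{A}$ consists of distributions on the two-element set $\{\mathcal{M}^{(S)}_o,\mathcal{M}^{(S)}_{d,\gamma}\}$, so that for any fixed $\pi$ the map $\alpha\mapsto J_{c,\gamma,\epsilon'}(x',\alpha,\pi)$ is linear on the one-simplex and the inner maximum collapses to a pointwise max of two functionals of $\pi$. Using $x'=(x,0)$, $\eta(\epsilon';c)=c(1-\epsilon')$ in case (S), and the identity $J^{(S)}_{cond}((x,0);\pi)=L_d^{\gamma}(x;\pi)$ from Section~\ref{sec3-1}, this gives
\begin{align*}
\max_{\alpha\in \mathcal{A}}J_{c,\gamma,\epsilon'}(x',\alpha,\pi)=\max\!\left\{J^{(S)}_{obj}((x,0);\pi),\,\tfrac{c(1-\epsilon')}{\epsilon'}L_d^{\gamma}(x;\pi)\right\}.
\end{align*}
Therefore the condition in the definition (\ref{def_c*}) of $c_{\gamma,\epsilon'}^*(x)$ is equivalent to the existence of some $\pi\in \hat{\Pi}^{\gamma}_{x,\epsilon'}$ with $J^{(S)}_{obj}((x,0);\pi)\leq c(1-\epsilon')$, and one reads off
\begin{align*}
c_{\gamma,\epsilon'}^*(x)=\frac{1}{1-\epsilon'}\min_{\pi\in \hat{\Pi}^{\gamma}_{x,\epsilon'}}J^{(S)}_{obj}((x,0);\pi),
\end{align*}
with attainment of the inner minimum inherited from Theorems~\ref{t3-3} and \ref{t3-4} in Appendix~\ref{secA2}.

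With this explicit form, I would prove (\ref{s3p_c*}) by two inequalities. For $(\geq)$, fix any $\pi\in \hat{\Pi}^{\gamma}_{x,\epsilon}$ and set $\epsilon'=L_d^{\gamma}(x;\pi)\in (0,\epsilon]$ (the edge case $\epsilon'=0$ is handled by a limit $\epsilon'\downarrow 0$). Since $\pi$ is feasible at level $\epsilon'$, the formula above gives $c_{\gamma,\epsilon'}^*(x)\leq J^{(S)}_{obj}((x,0);\pi)/(1-\epsilon')=J^{\gamma}_{\mbox{S}^3\mbox{P}}(x;\pi)$, and minimizing over $\pi$ and over $\epsilon'\in (0,\epsilon]$ yields one direction. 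For $(\leq)$, fix $\epsilon'\in (0,\epsilon]$ and let $\pi_{\epsilon'}:=\pi^*_{c_{\gamma,\epsilon'}^*(x),\gamma,\epsilon'}$ be the saddle-point minimizer; then $\pi_{\epsilon'}\in \hat{\Pi}^{\gamma}_{x,\epsilon'}\subset \hat{\Pi}^{\gamma}_{x,\epsilon}$ and $L_d^{\gamma}(x;\pi_{\epsilon'})\leq \epsilon'$, so
\begin{align*}
J^{\gamma}_{\mbox{S}^3\mbox{P}}(x;\pi_{\epsilon'})=\frac{J^{(S)}_{obj}((x,0);\pi_{\epsilon'})}{1-L_d^{\gamma}(x;\pi_{\epsilon'})}\leq \frac{J^{(S)}_{obj}((x,0);\pi_{\epsilon'})}{1-\epsilon'}=c_{\gamma,\epsilon'}^*(x),
\end{align*}
so $\min_{\pi\in \hat{\Pi}^{\gamma}_{x,\epsilon}}J^{\gamma}_{\mbox{S}^3\mbox{P}}(x;\pi)\leq c_{\gamma,\epsilon'}^*(x)$ for every $\epsilon'$, and in particular for $\epsilon^*$.

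For (\ref{s3p_c*-2}), set $\pi^*:=\pi^*_{c_{\gamma,\epsilon^*}^*(x),\gamma,\epsilon^*}$. By Paragraph~1, $\pi^*$ is a minimizer of $J^{(S)}_{obj}((x,0);\cdot)$ over $\hat{\Pi}^{\gamma}_{x,\epsilon^*}$ and $c_{\gamma,\epsilon^*}^*(x)=J^{(S)}_{obj}((x,0);\pi^*)/(1-\epsilon^*)$. The key remaining point is to show $L_d^{\gamma}(x;\pi^*)=\epsilon^*$, so that $J^{\gamma}_{\mbox{S}^3\mbox{P}}(x;\pi^*)$ coincides with the right-hand side; if instead $L_d^{\gamma}(x;\pi^*)<\epsilon^*$, then $\pi^*$ would be feasible at the strictly smaller level $\epsilon'':=L_d^{\gamma}(x;\pi^*)$, forcing $c_{\gamma,\epsilon''}^*(x)\leq J^{(S)}_{obj}((x,0);\pi^*)/(1-\epsilon'')<c_{\gamma,\epsilon^*}^*(x)$ and contradicting the outer optimality of $\epsilon^*$. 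The main obstacles are the attainment arguments — the inner minimum over $\pi$ (deferred to Theorems~\ref{t3-3} and \ref{t3-4}) and the outer minimum over $\epsilon'\in (0,\epsilon]$ (requiring a monotonicity/continuity analysis of $\epsilon'\mapsto c_{\gamma,\epsilon'}^*(x)$) — together with handling the degenerate case $L_d^{\gamma}(x;\pi)=0$ in the inequality chain.
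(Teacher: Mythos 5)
Your proof is correct, and at its core it rests on the same two facts as the paper's argument: the inner maximum over the one-simplex $\mathcal A$ collapses to $\max\bigl\{J^{(S)}_{obj}(x';\pi),\tfrac{c(1-\epsilon')}{\epsilon'}L_d^{\gamma}(x;\pi)\bigr\}$, so that feasibility in (\ref{def_c*}) amounts to $\pi\in\hat{\Pi}^{\gamma}_{x,\epsilon'}$ together with $J^{(S)}_{obj}(x';\pi)\leq c(1-\epsilon')$. The difference is organizational and works in your favor: you extract the closed form $c_{\gamma,\epsilon'}^*(x)=\tfrac{1}{1-\epsilon'}\min_{\pi\in\hat{\Pi}^{\gamma}_{x,\epsilon'}}J^{(S)}_{obj}(x';\pi)$ up front, after which (\ref{s3p_c*}) follows from two direct inequalities (choosing $\epsilon'=L_d^{\gamma}(x;\pi)$ in one direction, and using $L_d^{\gamma}(x;\pi_{\epsilon'})\leq\epsilon'$ in the other) and (\ref{s3p_c*-2}) is nearly immediate. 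The paper never states this closed form; it instead fixes an optimal $\pi'$ for the left-hand side, sets $\epsilon_{x,\pi'}=L_d^{\gamma}(x;\pi')$, and establishes $c_{\gamma,\epsilon_{x,\pi'}}^*(x)=\min_{\pi\in\hat{\Pi}^{\gamma}_{x,\epsilon}}J^{\gamma}_{\mbox{S}^3\mbox{P}}(x,\pi)$ through two proof-by-contradiction steps (leading to (\ref{nature_pi_prime_c}) and (\ref{nature_pi_ast})) before comparing with $\min_{\epsilon'}c_{\gamma,\epsilon'}^*(x)$; your version exposes the same content without the contradictions. Two caveats are shared by both arguments and are not resolved by either: the degenerate case $c=0$, where the term $\tfrac{c(1-\epsilon')}{\epsilon'}L_d^{\gamma}$ vanishes and the constraint $L_d^{\gamma}\leq\epsilon'$ drops out of your claimed equivalence, and attainment of the outer minimum over $\epsilon'\in(0,\epsilon]$, which the theorem statement presupposes; you at least flag the $L_d^{\gamma}=0$ edge case, so your proof is no weaker on these points. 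Finally, for (\ref{s3p_c*-2}) you do not actually need to establish $L_d^{\gamma}(x;\pi^*)=\epsilon^*$: the chain $\min_{\pi}J^{\gamma}_{\mbox{S}^3\mbox{P}}\leq J^{\gamma}_{\mbox{S}^3\mbox{P}}(x;\pi^*)\leq\tfrac{1}{1-\epsilon^*}J^{(S)}_{obj}(x';\pi^*)\leq c_{\gamma,\epsilon^*}^*(x)$ combined with (\ref{s3p_c*}) already forces all terms to coincide, which shortens your third paragraph.
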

%---------------------------------------------------------------------------------------------------------------------------%
\begin{thm}
\label{t3-2}
%---------------------------------------------------------------------------------------------------------------------------%
In the case of (M), for $x'=(x,s)$, 
\begin{align}
\min_{\pi\in \hat{\Pi}^{\gamma}_{x,\epsilon}}J_{\mbox{MCMP}}(x;\pi)=\min_{\pi\in \hat{\Pi}^{\gamma}_{x,\epsilon}}J^{(M)}_{obj}(x',\pi)=J^{(M)}_{obj}(x',\pi^*_{c_{\gamma,\epsilon}^*(x),\gamma,\epsilon})=c_{\gamma,\epsilon}^*(x).\label{t3-2_c}
\end{align} 
\end{thm}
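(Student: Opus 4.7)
\textbf{Proof plan for Theorem~\ref{t3-2}.}
The first equality in (\ref{t3-2_c}) is immediate from the identity $J_{\mbox{MCMP}}(x;\pi) = J^{(M)}_{obj}((x,s);\pi)$ stated at the end of the construction of $\mathcal M^{(M)}_{o}$ in Section~\ref{sec3-1}. Writing $v^\star := \min_{\pi \in \hat{\Pi}^{\gamma}_{x,\epsilon}} J^{(M)}_{obj}(x',\pi)$ with $x' = (x,s)$, it remains to show that $c^*_{\gamma,\epsilon}(x) = v^\star$ and that the saddle-point policy $\pi^*_{c^*_{\gamma,\epsilon}(x),\gamma,\epsilon}$ attains this value.

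The pivotal step is making the inner $\max_{\alpha}$ explicit. Since $\mathcal A$ is the set of probability measures on the two-point set $\{\mathcal M_o,\mathcal M_{d,\gamma}\}$, the functional $J_{c,\gamma,\epsilon}(x',\alpha,\pi)$ is linear in $\alpha$, and with $\eta(\epsilon;c) = c$ in case (M) one obtains
\begin{align*}
\max_{\alpha\in\mathcal A} J_{c,\gamma,\epsilon}(x',\alpha,\pi)
= \max\Bigl\{J^{(M)}_{obj}(x';\pi),\ \tfrac{c}{\epsilon}\, L^\gamma_d(x;\pi)\Bigr\},
\end{align*}
using $J^{(M)}_{cond}((x,s);\pi) = L^\gamma_d(x;\pi)$ from Section~\ref{sec3-1}. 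For $c > 0$ the defining condition $\min_\pi \max_\alpha J_{c,\gamma,\epsilon}(x',\alpha,\pi) \leq c$ in (\ref{def_c*}) is therefore equivalent to the existence of some $\pi \in \hat{\Pi}^{\gamma}_{x,\epsilon}$ with $J^{(M)}_{obj}(x';\pi) \leq c$.

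Given this characterization, $c^*_{\gamma,\epsilon}(x) = v^\star$ follows from two elementary inclusions. The bound $c^*_{\gamma,\epsilon}(x) \leq v^\star$ is obtained by witnessing the condition at $c = v^\star$ with a minimizer of $v^\star$. The reverse bound is obtained by observing that for any $c < v^\star$, every $\pi \in \hat{\Pi}^{\gamma}_{x,\epsilon}$ has $J^{(M)}_{obj}(x';\pi) \geq v^\star > c$, while every $\pi \notin \hat{\Pi}^{\gamma}_{x,\epsilon}$ has $\tfrac{c}{\epsilon} L^\gamma_d(x;\pi) > c$, so no $\pi$ can witness the condition. Finally, applying the same equivalence at $c = c^*_{\gamma,\epsilon}(x)$ to the minimizer $\pi^*_{c^*_{\gamma,\epsilon}(x),\gamma,\epsilon}$ shows it lies in $\hat{\Pi}^{\gamma}_{x,\epsilon}$ and satisfies $J^{(M)}_{obj}(x',\pi^*_{c^*_{\gamma,\epsilon}(x),\gamma,\epsilon}) \leq c^*_{\gamma,\epsilon}(x) = v^\star$, with the reverse inequality immediate by definition of $v^\star$.

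The main obstacle is attainment: the chain above only reads as a chain of equalities once one knows that the $\arg\min$ in (\ref{def_c*}) is realized and that the inner $\min$ over the large policy class $\Pi$ is achieved by some $\pi^*_{c,\gamma,\epsilon}$. These existence facts should be supplied by the proofs of Theorems~\ref{t3-3} and \ref{t3-4} in Appendix~\ref{secA2}, where a standard compactness argument reduces the optimization to stationary Markov policies on the augmented state space constructed in Section~\ref{sec3-1}; I would invoke them as a black box.
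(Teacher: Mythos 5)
Your proposal is correct and follows essentially the same route as the paper's proof in Appendix~\ref{secA1-2}: both reduce $\max_{\alpha}J_{c,\gamma,\epsilon}$ to $\max\{J^{(M)}_{obj}(x';\pi),\tfrac{c}{\epsilon}L_d^{\gamma}(x;\pi)\}$, sandwich $c^*_{\gamma,\epsilon}(x)$ against $v^\star=\min_{\pi\in\hat{\Pi}^{\gamma}_{x,\epsilon}}J^{(M)}_{obj}(x',\pi)$ using the fact that policies outside $\hat{\Pi}^{\gamma}_{x,\epsilon}$ force the constraint term above $c$, and defer the attainment of the minimizing policy to Appendix~\ref{secA2}, exactly as the paper does (the paper phrases the lower bound as a contradiction where you argue it directly, which is an immaterial difference).
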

%===========================================================================================================================%
\subsection{Derivation of $J^*_{c,\gamma,\epsilon}$\label{sec3-3}}
%===========================================================================================================================%
We calculate $J^*_{c,\gamma,\epsilon}$ by treating it as BAMDP problem for the two MDPs, $\mathcal M_o$ and $\mathcal M_{d,\gamma}$.
This derivation method is justified by Theorem~\ref{t3-3}.
Theorem~\ref{t3-3} shows that the order of minimization and maximization for $J_{c,\gamma,\epsilon}$ is interchangeable.
%---------------------------------------------------------------------------------------------------------------------------%
\begin{thm}
\label{t3-3}
%---------------------------------------------------------------------------------------------------------------------------%
Given $\gamma$ and $\epsilon$, for all $c\geq c_{\gamma,\epsilon}^*(x)$, 
\begin{align}
\min_{\pi\in \Pi}\Bigl\{\max_{\alpha\in \mathcal A}J_{c,\gamma,\epsilon}(x',\alpha,\pi)\Bigr\}
=\max_{\alpha\in \mathcal A}\Bigl\{\min_{\pi\in \Pi}J_{c,\gamma,\epsilon}(x',\alpha,\pi)\Bigr\}.\label{min_max_2zerosum}
\end{align}
Here, $x'=(x,0)$ in the case of (S), and $x'=(x,s)$ in the case of (M). 
\end{thm}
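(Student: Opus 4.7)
The weak-duality inequality $\min_{\pi}\max_\alpha J_{c,\gamma,\epsilon}(x',\alpha,\pi) \geq \max_\alpha \min_\pi J_{c,\gamma,\epsilon}(x',\alpha,\pi)$ holds automatically, so only the reverse (strong-duality) direction requires work. My strategy is to exploit the affinity of $J_{c,\gamma,\epsilon}$ in $\alpha$, convexify the policy space via mixed policies, and then apply a separating-hyperplane argument in the two-dimensional cost space spanned by $(J_{obj}, J_{cond})$.

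First I would simplify the inner maximization. Parameterising $\alpha \in \mathcal A$ by $\alpha_o := \alpha(\mathcal M_o) \in [0,1]$ makes (\ref{j_c_gamma_def}) affine in $\alpha_o$, so $\max_{\alpha \in \mathcal A} J_{c,\gamma,\epsilon}(x',\alpha,\pi) = \max\bigl\{J_{obj}(x';\pi),\ \tfrac{\eta(\epsilon;c)}{\epsilon} J_{cond}(x';\pi)\bigr\}$ and is attained at a vertex of $\mathcal A$. Next I would convexify: by the equivalence of mixed policies with policies in $\Pi$ stated in Section~\ref{sec2-2}, replacing the minimum over $\pi \in \Pi$ by a minimum over $z \in \mathcal Z$ leaves $J_{obj}$ and $J_{cond}$ unchanged in infimum but renders them affine in the mixing weights. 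Define $K := \{(J_{obj}(x';\pi),\, J_{cond}(x';\pi)) : \pi \in \Pi\} \subset [0,\infty)^2$ and let $\overline{K}$ be its closed convex hull; convexity of $\overline{K}$ is what makes the separation argument possible.

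For the key step, set $V := \min_\pi \max_\alpha J_{c,\gamma,\epsilon}(x',\alpha,\pi)$. The hypothesis $c \geq c_{\gamma,\epsilon}^*(x)$ together with the definition (\ref{def_c*}) gives $V \leq \eta(\epsilon;c) < \infty$, and by construction every $(a,b) \in \overline{K}$ must satisfy $\max\{a,\,(\eta(\epsilon;c)/\epsilon) b\} \geq V$. Hence $\overline{K}$ is disjoint from the open set $U := \{(a,b) \in \mathbb{R}^2 : a < V \text{ and } (\eta(\epsilon;c)/\epsilon)\,b < V\}$. Since $U$ and $\overline{K}$ are convex with $U$ open, a separating hyperplane exists; because $U$ extends to $-\infty$ in both coordinates while $\overline{K} \subset [0,\infty)^2$, its normal vector must have nonnegative components. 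After normalising, the normal becomes an $\alpha^* \in \mathcal A$ satisfying $\alpha_o^* a + \alpha_d^*\,(\eta(\epsilon;c)/\epsilon)\,b \geq V$ for all $(a,b) \in \overline{K}$. Pulling this back to policies gives $\min_{\pi} J_{c,\gamma,\epsilon}(x',\alpha^*,\pi) \geq V$, and therefore $\max_\alpha \min_\pi J_{c,\gamma,\epsilon}(x',\alpha,\pi) \geq V$, which is (\ref{min_max_2zerosum}).

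The main obstacle will be the preliminary existence step that I glossed over above: justifying that the $\min$'s in both sides of (\ref{min_max_2zerosum}) are attained so that $V$ is realised in $\overline{K}$ and the separation conclusion pulls back cleanly to a policy-level statement. I expect to handle this by casting $\mathcal M_o$ and $\mathcal M_{d,\gamma}$ as a BAMDP over the two-point belief simplex (as foreshadowed in Section~\ref{sec3-3}) and appealing to standard Bellman existence theorems. The delicate point is the unbounded total-cost coordinate $r_t$ appearing in the (S) construction; a truncation argument, combined with the finite second-moment assumption on the one-stage cost $g$ from Section~\ref{sec2-1}, should give the uniform integrability needed to close the existence argument, following the template of \cite{otsuboSSS23}.
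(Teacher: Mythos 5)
Your argument is correct in outline, but it is a genuinely different route from the paper's. The paper (Appendix~\ref{secA2}, which proves Theorems~\ref{t3-3} and \ref{t3-4} simultaneously) never invokes a separation theorem on the achievable-performance set: it perturbs the maximizing prior $\alpha^*$ to $\alpha^{\pm}_{\delta}$, builds the two associated Bayes-optimal deterministic policies, restricts to the \emph{finite} two-policy matrix game over $\Pi_{\delta}^*=\{\pi_{\delta}^{\alpha_{\delta}^-},\pi_{\delta}^{\alpha_{\delta}^+}\}$, applies the von~Neumann minimax theorem for finite matrix games, and then closes the sandwich $\max_{\alpha}\min_{\pi}J\leq\min_{\pi}\max_{\alpha}J\leq\max_{\alpha}\min_{z_{\delta}}\hat{J}_{x,\delta}\leq\max_{\alpha}\min_{\pi}J+2\delta M$ using concavity of $\alpha\mapsto\min_{\pi}J_{c,\gamma,\epsilon}(x',\alpha,\pi)$ and the bounded slopes of the two linear value functions, letting $\delta\to 0$. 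Your separating-hyperplane argument on $\overline{K}\subset[0,\infty)^2$ is cleaner and more self-contained for Theorem~\ref{t3-3} alone (you correctly reduce $\max_{\alpha}$ to a vertex max, use $c\geq c^*_{\gamma,\epsilon}(x)$ only to ensure $V<\infty$, and read off the nonnegativity of the normal from the shape of $U$); what it does not buy you is Theorem~\ref{t3-4}, which the paper's construction delivers for free in the form of an explicit near-optimal mixture of two deterministic semi-Markov policies, whereas you would still have to extract an extreme-point representation from the supporting hyperplane. Two caveats on your side: (i) the step ``every $(a,b)\in\overline{K}$ satisfies $\max\{a,(\eta(\epsilon;c)/\epsilon)b\}\geq V$'' is \emph{not} automatic for a convex hull (the constraint set is the complement of a convex set), so it genuinely requires that convex combinations of performance vectors be achievable by policies in $\Pi$; Section~\ref{sec2-2} only asserts this for finite mixtures of \emph{deterministic} policies, so you should either define $K$ over deterministic policies and argue $\inf$ over $\Pi$ equals $\inf$ over $\overline{K}$, or cite the standard occupation-measure fact for randomized policies --- a routine but necessary patch; (ii) the attainment issue you flag is real, but the paper is no more rigorous about it than you are, and your $\inf$/$\sup$ version of the separation argument already yields the identity without attainment.
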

Given the distribution $\alpha\in \mathcal A$, we define the transition probability $\bar{q}$ and the one stage cost $\bar{G}$ as 
\begin{align*}
\bar{q}(x''|x',\alpha,u')&=\alpha(\mathcal M_o)p_o(F^{-1}_{o}(x''|x',u')~|~x',u')\\
&~~~~~~~~~~~~~~~~~~~~~~~~+\alpha(\mathcal M_{d,\gamma})p_{d,\gamma}(F^{-1}_{o}(x''|x',u')~|~x',u')
\end{align*}
and 
\begin{align*}
\bar{G}(x',\alpha,u',x'')&=\alpha(\mathcal M_o)\sum_{w'\in F^{-1}_{o}(x''|x',u')}G_o(x',u',w')p_o(w'|x',u')\\
&~~~~~~~~+\frac{\eta(\epsilon;c)}{\epsilon}\alpha(\mathcal M_{d,\gamma})\sum_{w'\in F^{-1}_{o}(x''|x',u')}G_{d,\gamma}(x',u',w')p_{d,\gamma}(w'|x',u').
\end{align*}
The operator $T$ for $J:\mathcal X'\times \mathcal A\rightarrow \mathbb{R}$ as 
\begin{align*}
(TJ)(x',\alpha)=\min_{u'\in \mathcal U(x')}\sum_{x''\in \mathcal X'}\bar{q}(x''|x',\alpha,u')\{\bar{G}(\alpha, x',u',w')+J(x'',\nu(\alpha))\}.
\end{align*}
Here, 
\begin{align*}
\nu(\alpha)(\mathcal M_o)=\frac{\alpha(\mathcal M_o)}{\alpha(\mathcal M_o)+\gamma \alpha(\mathcal M_{d,\gamma})},~\nu(\alpha)(\mathcal M_{d,\gamma})=1-\nu(\alpha)(\mathcal M_o),
\end{align*}
and $\mathcal U(x')$ is a set of actions that can be selected in a state $x'\in \mathcal X'$.
In the case of (S), $\mathcal U(x')=\mathcal C$.
In the case of (M), 
\begin{align}
\mathcal U(x')=
\left\{
\begin{array}{ll}
\mathcal C' & (x'\in \mathcal X_{d,or}\times \{s\}) \\
\mathcal C\times \{s\} & (x'\in (\mathcal X\setminus\mathcal X_{d,or})\times \{s\}) \\
\{\mu_d(x)\}\times \{f\} & (x'=(x,f)\in \mathcal X_{d,or}\times \{f\}) \\
(\bar{u},f) & (\mbox{otherwise}) \\
\end{array}
\right..
\end{align}
Here, $\pi_d=(\mu_d,\mu_d,\cdots)$ where $\mu_d:\mathcal X\rightarrow \mathcal C$ is a stationary policy that continues to stay in the attention state set $\mathcal X_{d,or}$, and $\bar{u}\in \mathcal C$.
$\mathcal C'$ is a finite set, and $\bar{G}$ is non-negative.
Therefore, when we do value iteration (VI) $J_{\infty}=\lim_{N\rightarrow \infty}T^NJ_0$ where $J_0\equiv 0$, $J_{\infty}$ is the minimum fixed point of $T$ (\cite{Bertsekas2017}). 
From Proposition~5.10 of \cite{Bertsekas1978}, $J^*_{c,\gamma,\epsilon}(x',\alpha)=J_{\infty}(x',\alpha)$.
%===========================================================================================================================%
\subsection{Derivation of Optimal Policy\label{sec3-4}}
%===========================================================================================================================%
We derive the nearly optimal mixed policy $z_{\delta}^*\in \mathcal Z$ parameterized by a small positive real number $\delta$.
%As mentioned in section \ref{sec2-2}, there exists a stochastic policy $\pi\in \Pi$ for the optimal mixed policy $z^*\in \mathcal Z$.
The distribution $\alpha\in \mathcal A$ that attains the maximum value on the right side of (\ref{min_max_2zerosum}) is written as $\alpha^*$.
Using a small positive real number $\delta$, we define $\alpha_{\delta}^{-},\alpha_{\delta}^{+}\in \mathcal A$ as $\alpha_{\delta}^{-}(\mathcal M_o)=\alpha^*(\mathcal M_o)-\delta$ and $\alpha_{\delta}^{+}(\mathcal M_o)=\alpha^*(\mathcal M_o)+\delta$.
We design the deterministic policy $\pi^*=(\mu^*,\mu^*,\cdots)$ to satisfy $\mu^*:\mathcal X'\times \mathcal A\rightarrow \mathcal C'$ and 
\begin{align}
\mu^*(x',\alpha)\!\in \arg \min_{u'\in \mathcal U(x')}\sum_{x''\in \mathcal X'}\!\!\bar{q}(x''|x',\alpha,u')\{\bar{G}(\alpha, x',u',w')+J^*_{c,\gamma,\epsilon}(x'',\nu(\alpha))\}~\forall \alpha\in \mathcal A.\label{optimal_mixed_policy_mu}
\end{align}
Also, we define the deterministic policy $\pi_{\delta}^{\alpha}=(\mu_{\delta,0}^{\alpha},\mu_{\delta,1}^{\alpha},\cdots)$ satisfying $\mu_{\delta,t}^{\alpha}(x')=\mu^*(x',\alpha_t)$, $\alpha_{t+1}=\nu(\alpha_t)$ and $\alpha_0=\alpha$. 
Let $z_{\delta}^*$ be a distribution on the policy set $\Pi_{\delta}^*=\{\pi_{\delta}^{\alpha_{\delta}^{-}},\pi_{\delta}^{\alpha_{\delta}^{+}}\}$ such that 
\begin{align*}
z_{\delta}^*\in \arg \min_z \Bigl\{\max\Bigl\{\sum_{\pi\in \Pi_{\delta}^*}z(\pi)J_{obj}(x';\pi),\sum_{\pi\in \Pi_{\delta}^*}z(\pi)\frac{\eta(\epsilon;c)}{\epsilon}J_{cond}(x';\pi)\Bigr\}\Bigr\}.
\end{align*}
This problem can be treated as a matrix game problem (\cite{algebraStrang}).
We define
\begin{align*}
J_{obj}(x';z_{\delta}^*)=\sum_{\pi\in \Pi_{\delta}^*}z_{\delta}^*(\pi)J_{obj}(x';\pi).
\end{align*}
Theorem~\ref{t3-4} shows that by making $\delta$ sufficiently small, the policy $z_{\delta}^*\in \mathcal Z$ can get sufficiently close to the optimal policy.
%---------------------------------------------------------------------------------------------------------------------------%
\begin{thm}
\label{t3-4}
%---------------------------------------------------------------------------------------------------------------------------%
Given $\epsilon$ and $\gamma$, $\lim_{\delta\rightarrow 0}J_{obj}(x';z_{\delta}^*)=\min_{\pi\in \hat{\Pi}^{\gamma}_{x,\epsilon}}J_{obj}(x';\pi)$.
\end{thm}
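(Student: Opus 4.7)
The plan is to combine the minimax duality of Theorem~\ref{t3-3}, the BAMDP/VI characterization of $J^*_{c,\gamma,\epsilon}$ from Section~\ref{sec3-3}, and a standard $2\times 2$ matrix-game argument, tied together by continuity in $\alpha$. First, I fix $c = c^*_{\gamma,\epsilon}(x)$. By Theorem~\ref{t3-3} the common saddle value
\begin{align*}
V^* := \min_{\pi\in\Pi}\max_{\alpha\in\mathcal A} J_{c,\gamma,\epsilon}(x',\alpha,\pi) = \max_{\alpha\in\mathcal A}\min_{\pi\in\Pi} J_{c,\gamma,\epsilon}(x',\alpha,\pi)
\end{align*}
is attained by $\alpha^*$ on the outer maximum, and by the definition of $c^*$ we have $V^* = \eta(\epsilon;c^*)$. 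Theorems~\ref{t3-1} and \ref{t3-2} already tie $V^*$ to the target quantity $\min_{\pi\in\hat\Pi^\gamma_{x,\epsilon}} J_{obj}(x';\pi)$, so it suffices to show that $J_{obj}(x';z_\delta^*)$ converges to that value as $\delta\to 0$.

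Second, by the VI argument at the end of Section~\ref{sec3-3}, the deterministic policy $\pi_\delta^{\alpha}$ attains $\min_{\pi\in\Pi} J_{c,\gamma,\epsilon}(x',\alpha,\pi) = J^*_{c,\gamma,\epsilon}(x',\alpha)$. Since $\alpha_\delta^\pm \to \alpha^*$ and $J^*_{c,\gamma,\epsilon}(x',\cdot)$ is continuous in $\alpha$ (being a pointwise infimum of affine functions in $\alpha$, it is concave and hence continuous on the interior of $\mathcal A$), both $\pi_\delta^{\alpha_\delta^-}$ and $\pi_\delta^{\alpha_\delta^+}$ become arbitrarily close in value to a best response against $\alpha^*$. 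A standard exchange inequality between the BAMDP optima at $\alpha_\delta^+$ and $\alpha_\delta^-$ gives the monotonicity $J_{obj}(x';\pi_\delta^{\alpha_\delta^+}) \le J_{obj}(x';\pi_\delta^{\alpha_\delta^-})$ and $J_{cond}(x';\pi_\delta^{\alpha_\delta^+}) \ge J_{cond}(x';\pi_\delta^{\alpha_\delta^-})$. For sufficiently small $\delta$ these bracketing values straddle the tight constraint level $J_{cond} = \epsilon$, so the mixed policy $z_\delta^*$ obtained from the $2\times 2$ matrix game places positive weight on both policies and equalizes $J_{obj}(x';z_\delta^*)$ with $\tfrac{\eta(\epsilon;c)}{\epsilon} J_{cond}(x';z_\delta^*)$. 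This common value is the value of the matrix game, which is squeezed between the BAMDP values $J^*_{c,\gamma,\epsilon}(x',\alpha_\delta^\pm)$; these converge to $V^*$, so the conclusion follows via (\ref{s3p_c*-2}) in case (S) and (\ref{t3-2_c}) in case (M).

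The main obstacle will be justifying the interior-bracketing step rigorously and handling the boundary case in which $\alpha^*$ is a point mass on $\mathcal M_o$ or $\mathcal M_{d,\gamma}$, so that one of $\alpha_\delta^\pm$ falls outside $\mathcal A$. The boundary case reduces to a single-policy argument via pure BAMDP optimality applied to the surviving $\pi_\delta^{\alpha_\delta^\pm}$. The generic interior case requires a supporting-hyperplane or subgradient argument based on the concavity of $J^*_{c,\gamma,\epsilon}(x',\cdot)$, to show that perturbing $\alpha^*$ by $\pm\delta$ strictly tilts the $(J_{obj}, J_{cond})$ Pareto trade-off of the BAMDP optima so that the straddling property genuinely holds for all sufficiently small $\delta$. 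Once this is in place, the convergence $J_{obj}(x';z_\delta^*) \to \min_{\pi\in\hat\Pi^\gamma_{x,\epsilon}} J_{obj}(x';\pi)$ follows from standard continuity of the value of a $2\times 2$ matrix game in its payoff entries together with the continuity of $J^*_{c,\gamma,\epsilon}(x',\alpha)$ in $\alpha$.
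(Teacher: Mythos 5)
Your overall framework---reducing to the $2\times2$ matrix game on $\Pi_\delta^*=\{\pi_\delta^{\alpha_\delta^-},\pi_\delta^{\alpha_\delta^+}\}$, invoking the matrix-game minimax theorem, and exploiting concavity of $J^*_{c,\gamma,\epsilon}(x',\cdot)$ in $\alpha$---is the same as the paper's. But the decisive step, an upper bound on the restricted game value $v_\delta:=\min_{z_\delta\in\mathcal Z_\delta}\max_{\alpha\in\mathcal A}\hat J_{x,\delta}(z_\delta,\alpha)$ that converges to the saddle value $V^*$, is not correctly established. You assert that $v_\delta$ is ``squeezed between the BAMDP values $J^*_{c,\gamma,\epsilon}(x',\alpha_\delta^\pm)$,'' but the inequality goes the other way: writing $\ell^\pm(\alpha)=J_{c,\gamma,\epsilon}(x',\alpha,\pi_\delta^{\alpha_\delta^\pm})$, each $\ell^\pm$ is affine in $\alpha$, lies \emph{above} the concave function $h(\alpha)=\min_{\pi\in\Pi}J_{c,\gamma,\epsilon}(x',\alpha,\pi)$, and touches it at $\alpha_\delta^\pm$; hence by the matrix-game minimax theorem $v_\delta=\max_\alpha\min\{\ell^-(\alpha),\ell^+(\alpha)\}\ge\max_\alpha h(\alpha)=V^*\ge\max\{J^*_{c,\gamma,\epsilon}(x',\alpha_\delta^-),J^*_{c,\gamma,\epsilon}(x',\alpha_\delta^+)\}$. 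If $v_\delta$ really sat between the two values $J^*_{c,\gamma,\epsilon}(x',\alpha_\delta^\pm)$ you would conclude $v_\delta\le V^*$ and therefore $v_\delta=V^*$ exactly for every $\delta$, which the geometry does not give. The paper's actual mechanism is a Lipschitz estimate on the two supporting lines: with $M$ bounding the absolute values of the slopes of $\ell^\pm$, one gets $\max_\alpha\min\{\ell^-(\alpha),\ell^+(\alpha)\}\le h(\alpha_\delta^-)+2\delta M\le V^*+2\delta M$ (on $[\alpha_\delta^-,\alpha_\delta^+]$ use $\ell^-(\alpha)\le\ell^-(\alpha_\delta^-)+2\delta M=h(\alpha_\delta^-)+2\delta M$; outside that interval the signs of the supporting slopes force $\min\{\ell^-,\ell^+\}\le V^*$). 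This $O(\delta)$ bound is the heart of the proof and is absent from your sketch; ``continuity of the value of a $2\times2$ game in its payoff entries'' cannot substitute for it, because the entries $J_{obj}(x';\pi_\delta^{\alpha_\delta^\pm})$ and $J_{cond}(x';\pi_\delta^{\alpha_\delta^\pm})$ need not converge as $\delta\to0$ (best responses can jump discontinuously in $\alpha$).

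Separately, the equalization/straddling detour is unnecessary, and it is exactly the part you yourself flag as the ``main obstacle.'' The paper never needs $z_\delta^*$ to put positive weight on both policies, nor to equalize $J_{obj}$ with $\frac{\eta(\epsilon;c)}{\epsilon}J_{cond}$, nor the constraint to be active; it only needs the chain $\max_\alpha\min_\pi J_{c,\gamma,\epsilon}\le\min_\pi\max_\alpha J_{c,\gamma,\epsilon}\le v_\delta=\max_\alpha\min_{z_\delta}\hat J_{x,\delta}\le V^*+2\delta M$, which holds regardless of whether $\alpha^*$ is interior. Dropping the straddling claim and replacing ``squeezed between'' by the tangent-line $2\delta M$ estimate would turn your outline into essentially the paper's argument.
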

In the case (M), there is an optimal policy within the class $\Pi_{ms}$ of policies that do not depend on total costs. 
%===========================================================================================================================%
\subsection{Approximate Solution\label{sec3-5}}
%===========================================================================================================================%
Theorems \ref{t3-3} and \ref{t3-4} show that the optimal policy for the approximation problem exists in the class of mixed policies which are expressed as distributions on a set of deterministic semi-Markov policies. 
From (\ref{optimal_mixed_policy_mu}), the deterministic policies that constitute the optimal mixed policy select an action depending on a posterior distribution $\alpha\in \mathcal A$ at each time.
If the distribution $\alpha\in \mathcal A$ at the initial time $t=0$ satisfies $\alpha(\mathcal M_o)\in(0,1)$, $\alpha$ changes over time. 
Therefore, deterministic semi-Markov policies that belong to the support of the optimal mixed policy are generally not stationary.
In this section, Assumptions~\ref{a2-1} and \ref{a2-3} are added. 
Assumption~\ref{a2-1} indicates that the range of the one stage cost consists only of isolated points.
Assumption~\ref{a2-3} indicates that the expected total cost of an improper policy and an initial state where the probability of reaching the terminal state is not 1 is infinite.
When these assumptions hold, the optimal policy for the approximation problem can be approximated with sufficient accuracy by a mixed policy that stochastically selects a deterministic and stationary semi-Markov policy.
%---------------------------------------------------------------------------------------------------------------------------%
\begin{aspt}
\label{a2-1}
%---------------------------------------------------------------------------------------------------------------------------%
For all $(x,u)\in \mathcal X\times \mathcal C$, there is a $\Delta_{x,u}$ such that $g(x,u,w)\neq g(x,u,w')\Rightarrow |g(x,u,w)-g(x,u,w')|\geq \Delta_{x,u}$.
\end{aspt}
%---------------------------------------------------------------------------------------------------------------------------%
\begin{aspt}
\label{a2-3}
%---------------------------------------------------------------------------------------------------------------------------%
Suppose that $\mbox{Pr}(\forall t=0,1,2,\cdots,~x_t\neq 0~|~x_0=x,~\pi)>0$ for $x\in \mathcal X$ and $\pi\in \Pi$.
Then, $E^{\mathcal M,\pi}\{\sum_{t=0}^{\infty}g(x_t,u_t,w_t)|x_0=x\}=\infty$.
\end{aspt}
The distribution set $\mathcal A$ is an infinite set, so it is difficult to do VI described in Section \ref{sec3-3}.
Therefore, for cases (S) and (M), we approximately derive $J^*_{c,\gamma,\epsilon}$ and the optimal policy $\pi^*_{c,\gamma,\epsilon}$ as follows.
%+++++++++++++++++++++++++++++++++++++++++++++++++++++++++++++++++++++++++++++++++++++++++++++++++++++++++++++++++++++++++++%
\subsubsection{Case (S)}
%+++++++++++++++++++++++++++++++++++++++++++++++++++++++++++++++++++++++++++++++++++++++++++++++++++++++++++++++++++++++++++%
We determine a sufficiently large real number $M>0$ and a policy $\pi'_{or}=(\mu'_{or},\mu'_{or},\mu'_{or},\cdots)\in \Pi_{d,or}$ such that $\mu'_{or}:x\mapsto u$.
Let $\Pi_{M,\pi'_{or}}$ be a set of all policies in which $\pi'_{or}$ is applied when the total cost is greater than or equal to $M$.  
We restrict the searched policy class to be from $\Pi$ to $\Pi_{M,\pi'_{or}}$.
Let $\Pi_{M,\pi'_{or}}^{(st)}$ be a set consisting of all stationary policies in $\Pi_{M,\pi'_{or}}$.
%We approximate $J^{(S)}_{obj}$ for the initial state $x'=(x,0)$ and the policy $\pi\in \Pi_{M,\pi'_{or}}$ as 
We approximate $J^{(S)}_{obj}$ as
\begin{align*}
J^{(S)}_{obj,\gamma}((x,0);\pi)&=E^{\mathcal M^{(S)}_o,\pi}\Bigl\{\sum_{t=0}^{\infty}\Phi_{M,\gamma}(x'_t)G_o^{(S)}(x'_t,u'_t,w'_t)\Bigl|x'_0=(x,0) \Bigr\}. 
\end{align*}
Here, $\Phi_{M,\gamma}(x')=\gamma$ if $x'=(x,r)\in \mathcal X\times [0,M)$, otherwise $\Phi_{M,\gamma}(x')=1$.
We define the new transition probability $p_{o,\gamma}^{(S)}$ as 
\begin{align*}
p_{o,\gamma}^{(S)}(w'_t|(x_t,r_t),u'_t)=
\left\{
\begin{array}{ll}
p_{d,\gamma}^{(S)}(w'_t|(x_t,r_t),u'_t)&(r_t < M)\\
p_{o}^{(S)}(w'_t|(x_t,r_t),u'_t)&(\mbox{otherwise})\\
\end{array}
\right.
\end{align*}
if $x'_t=(x_t,r_t)\in \mathcal X\times [0,\infty)$, and $p_{o,\gamma}^{(S)}(*|x'_t,u'_t)=1$ if $x'_t=*$. 
We make the new MDP $\mathcal M_{o,\gamma}^{(S)}$ by changing the transition probability of $\mathcal M_{o}^{(S)}$ to $p_{o,\gamma}^{(S)}$. 
Then, 
\begin{align*}
J^{(S)}_{obj,\gamma}((x,0);\pi)=E^{\mathcal M_{o,\gamma}^{(S)},\pi}\Bigl\{\sum_{t=0}^{\infty}G_o^{(S)}(x'_t,u'_t,w'_t)\Bigl|x'_0=(x,0) \Bigr\}.
\end{align*}
We define approximations $\hat{J}_{c,\gamma,\epsilon}$ and $\hat{J}^*_{c,\gamma,\epsilon}$ to $J_{c,\gamma,\epsilon}$ and $J^*_{c,\gamma,\epsilon}$ as 
\begin{align*}
\hat{J}_{c,\gamma,\epsilon}((x,0),\hat{\alpha},\pi)&=\hat{\alpha}(\mathcal M_{o,\gamma}^{(S)})J^{(S)}_{obj,\gamma}((x,0);\pi)+\frac{c(1-\epsilon)}{\epsilon}\hat{\alpha}(\mathcal M_{d,\gamma}^{(S)})J^{(S)}_{cond}((x,0);\pi)
\end{align*}
and $\hat{J}^*_{c,\gamma,\epsilon}((x,0),\hat{\alpha})=\min_{\pi\in \Pi_{M,\pi'_{or}}}\hat{J}_{c,\gamma,\epsilon}((x,0),\hat{\alpha},\pi)$. 
Here, $\hat{\alpha}$ is an element of the set $\hat{\mathcal A}$ which is a set of distributions on the set $\{\mathcal M_{o,\gamma}^{(S)},\mathcal M_{d,\gamma}^{(S)}\}$.
$\hat{J}^*_{c,\gamma,\epsilon}$ is calculated by VI.
If a total cost $r$ is less than $M$, $p_{o,\gamma}^{(S)}$ and $p_{d,\gamma}^{(S)}$ are same.
We use the operator $T_{\hat{\alpha}}$ of $\hat{J}_S:\mathcal X\times [0,M)\rightarrow [0,\infty)$ parametrized by $\hat{\alpha}\in \hat{\mathcal A}$ such that 
\begin{align*}
&(T_{\hat{\alpha}}\hat{J}_S)(x,r)=\min_{u\in \mathcal C}\sum_{w\in \mathcal W}p(w|x,u)\Bigl\{\hat{\alpha}(\mathcal M_{o,\gamma}^{(S)})G_o^{(S)}((x,r),u,w)\nonumber\\
&~~~~~~~~~~~~~~~~~~~~+\frac{c(1-\epsilon)}{\epsilon}\hat{\alpha}(\mathcal M_{d,\gamma}^{(S)})G_{d,\gamma}^{(S)}((x,r),u,w)+\gamma V_{S,\hat{\alpha}}(F_o^{(S)}((x,r),u,w),J_S)\Bigr\}. 
\end{align*}
Here,  
\begin{align*}
&V_{S,\hat{\alpha}}((x,r),J_S)\nonumber\\
&~=
\left\{
\begin{array}{ll}
\hat{J}_S(x,r)&(r < M)\\
\hat{\alpha}(\mathcal M_{o,\gamma}^{(S)})E^{\mathcal M^{(S)}_o,\pi'_{or}}\Bigl\{\sum_{t=0}^{\infty}G_o^{(S)}(x'_t,u'_t,w'_t)\Bigl|x'_0=(x,r) \Bigr\}&\\
~~~+\frac{c(1-\epsilon)}{\epsilon}\hat{\alpha}(\mathcal M_{d,\gamma}^{(S)})E^{\mathcal M_{d,\gamma}^{(S)},\pi'_{or}}\Bigl\{\sum_{t=0}^{\infty}G_{d,\gamma}^{(S)}(x'_t,u'_t,w'_t)\Bigl|x'_0=(x,r) \Bigr\}&(\mbox{otherwise})\\
\end{array}
\right..                                      
\end{align*}
Therefore, $\hat{J}^*_{c,\gamma,\epsilon}((x,0),\hat{\alpha})=\lim_{N\rightarrow \infty}T_{\hat{\alpha}}^N\hat{J}_{S,\hat{\alpha},0}(x,0)$ where $\hat{J}_{S,\hat{\alpha},0}\equiv 0$.
From Assumption~\ref{a2-1}, there are a finite number of possible values for a total cost $r$ in $[0,M)$.
Hence, the set $\Pi_{M,\pi'_{or}}^{(st)}$ can be regarded as a finite set.
Theorem \ref{t3-5} shows that by making $M$ sufficiently large and $\gamma$ sufficiently close to 1, a sufficiently accurate approximation to the optimal policy for the approximation problem can be obtained.
%---------------------------------------------------------------------------------------------------------------------------%
\begin{thm}
\label{t3-5}
%---------------------------------------------------------------------------------------------------------------------------%
Let $\mathcal P_{M}^{(S)}$ be a set of all probability distributions on $\Pi_{M,\pi'_{or}}^{(st)}$.
For the initial state $x\in \mathcal X$, we define 
\begin{align}
&\hat{c}_{M,\gamma,\epsilon}(x)\label{c_M_gamma_def}\\
&=\min \Bigl\{c\in [0,\infty)\Bigl|\min_{P_M^{(S)}\in \mathcal P_{M}^{(S)}}\max_{\hat{\alpha}\in \hat{\mathcal A}}\!\!\!\!\sum_{\pi\in \Pi_{M,\pi'_{or}}^{(st)}}\!\!\!\!P_M^{(S)}(\pi)\hat{J}_{c,\gamma,\epsilon}(x',\hat{\alpha},\pi)\leq (1-\epsilon)c,~x'=(x,0)\Bigr\}\nonumber
\end{align}
and
\begin{align}
&P_{M,\gamma,\epsilon}^{(S)}\in \arg\min_{P_M^{(S)}\in \mathcal P_{M}^{(S)}}\Bigl\{\max_{\hat{\alpha}\in \hat{\mathcal A}}\sum_{\pi\in \Pi_{M,\pi'_{or}}^{(st)}}P_M^{(S)}(\pi)\hat{J}_{\hat{c}_{M,\gamma,\epsilon}(x),\gamma,\epsilon}(x',\hat{\alpha},\pi)\Bigr\}.\label{mix_policy_stat}
\end{align}
$P_{M,\gamma,\epsilon}^{(S)}$ is a mixed policy, and an equivalent policy $\hat{\pi}^*_ {M,\gamma,\epsilon}\in \Pi$ exists as mentioned in Section \ref{sec2-2}.
Then, 
\begin{align*}
&\lim_{\gamma \uparrow 1}\min_{\pi\in \hat{\Pi}^{\gamma}_{\epsilon}}J^{\gamma}_{\mbox{S}^3\mbox{P}}(x,\pi)=\lim_{M\rightarrow \infty}\Bigl\{\lim_{\gamma \uparrow 1}\min_{\epsilon'\in (0,\epsilon]}\hat{c}_{M,\gamma,\epsilon'}(x)\Bigr\},\\
&\lim_{\gamma \uparrow 1}\min_{\pi\in \hat{\Pi}^{\gamma}_{\epsilon}}J^{\gamma}_{\mbox{S}^3\mbox{P}}(x,\pi)=\lim_{M\rightarrow \infty}\Bigl\{\lim_{\gamma \uparrow 1}J^{\gamma}_{\mbox{S}^3\mbox{P}}(x,\hat{\pi}^*_{M,\gamma,\hat{\epsilon}_{\gamma}^*(x)})\Bigr\},~\hat{\epsilon}_{\gamma}^*(x)=\arg\min_{\epsilon\in (0,\epsilon]}\hat{c}_{M,\gamma,\epsilon}(x),
\end{align*}
and $\hat{\pi}^*_{M,\gamma,\hat{\epsilon}_{\gamma}^*(x)}\in \hat{\Pi}^{\gamma}_{x,\epsilon}$.  
\end{thm}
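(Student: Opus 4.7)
The plan is to reduce the approximation problem from Section~\ref{sec3-1} to a finite-dimensional matrix game on $\Pi^{(st)}_{M,\pi'_{or}}$, and then to show that this reduction is asymptotically lossless as $M\to\infty$ and $\gamma\uparrow 1$. The starting point is the representation from Section~\ref{sec3-3}: by Theorem~\ref{t3-3}, $J^*_{c,\gamma,\epsilon}$ is the value of a two-person zero-sum game between $\pi\in\Pi$ and $\alpha\in\mathcal{A}$, and by Theorem~\ref{t3-4} its minimizer can be realized as a mixed policy supported on two deterministic semi-Markov policies produced by the BAMDP value iteration.

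The first step is to show that restricting the search from $\Pi$ to $\Pi_{M,\pi'_{or}}$ does not change the optimal value in the limit $M\to\infty$. The key input is Assumption~\ref{a2-3}: any policy with positive failure probability from a reachable initial state already has infinite expected total cost, so a near-optimal policy cannot let the running cost $r_t$ diverge on successful trajectories. Combined with Assumption~\ref{a2-1}, which provides a uniform positive gap $\Delta_{x,u}$ between distinct one-stage costs, this bounds the expected number of cost-incurring stages before $r_t$ crosses any threshold uniformly over near-optimal policies. Forcing a switch to $\pi'_{or}\in\Pi_{d,or}$ once $r_t\geq M$ therefore perturbs both $J^{(S)}_{obj,\gamma}$ and $J^{(S)}_{cond}$ by quantities vanishing as $M\to\infty$, yielding $\lim_{M\to\infty}\hat J^*_{c,\gamma,\epsilon}((x,0),\hat\alpha)=J^*_{c,\gamma,\epsilon}((x,0),\alpha)$ uniformly in the posterior.

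With this convergence in hand, Assumption~\ref{a2-1} also implies that the reachable values of $r_t$ in $[0,M)$ form a finite set, so the state space on which $T_{\hat\alpha}$ acts is finite and $\Pi^{(st)}_{M,\pi'_{or}}$ is finite as well. The optimization in (\ref{c_M_gamma_def}) then becomes a genuine finite matrix game between $\mathcal{P}_M^{(S)}$ and $\hat{\mathcal{A}}$, to which the minimax theorem applies, delivering the saddle-point mixed strategy $P^{(S)}_{M,\gamma,\epsilon}$ in (\ref{mix_policy_stat}). Comparing (\ref{c_M_gamma_def}) with (\ref{def_c*}), the uniform convergence from the previous step lifts to $\lim_{M\to\infty}\hat c_{M,\gamma,\epsilon}(x)=c^*_{\gamma,\epsilon}(x)$, and combining this with Theorem~\ref{t3-1} gives the first asserted limit. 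The second asserted limit then follows from the identity (\ref{s3p_c*-2}), applied with the policy $\hat\pi^*_{M,\gamma,\hat\epsilon^*_\gamma(x)}$, which is the ordinary-policy equivalent (in the sense of Section~\ref{sec2-2}) of the saddle-point mixed strategy and therefore attains the min-max value of (\ref{mix_policy_stat}) by construction. Finally, the inclusion $\hat\pi^*_{M,\gamma,\hat\epsilon^*_\gamma(x)}\in\hat\Pi^\gamma_{x,\epsilon}$ is immediate from the constraint of (\ref{c_M_gamma_def}): specializing $\hat\alpha$ to put mass one on $\mathcal{M}^{(S)}_{d,\gamma}$ forces $L^\gamma_d(x;\hat\pi^*)\leq\hat\epsilon^*_\gamma(x)\leq\epsilon$.

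The main obstacle is establishing the uniform-in-$\hat\alpha$ convergence $\hat J^*_{c,\gamma,\epsilon}\to J^*_{c,\gamma,\epsilon}$, because $J^{(S)}_{obj}$ is undiscounted while the tractable surrogate $J^{(S)}_{obj,\gamma}$ discounts stages below the cost threshold $M$ by $\gamma$. Controlling the gap requires Assumption~\ref{a2-3} to force near-optimal policies to terminate with finite expected length, combined with Assumption~\ref{a2-1} to convert any bound on expected total cost into a bound on the expected number of nontrivial stages, and hence on the effective exponent of $\gamma$. The order of limits in the statement, with $M\to\infty$ outermost and $\gamma\uparrow 1$ inside, is chosen precisely to make this control tractable: for each fixed $M$ the inner $\gamma\uparrow 1$ limit is a standard discounted-to-undiscounted reduction on a finite augmented state space, and the outer $M\to\infty$ subsequently removes the artificial restriction to $\Pi_{M,\pi'_{or}}$.
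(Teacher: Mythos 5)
Your proposal is correct and follows essentially the same route as the paper's own proof: the truncation of the policy class to $\Pi_{M,\pi'_{or}}$ with the switch to $\pi'_{or}$ once $r_t\geq M$, controlled via Assumption~\ref{a2-3} (the paper's Lemma~\ref{l3-8}); the reduction to a finite matrix game over $\Pi^{(st)}_{M,\pi'_{or}}$ using Assumption~\ref{a2-1} to make the reachable total-cost values finite, together with the discounted-to-undiscounted limit $\gamma\uparrow 1$ (Lemma~\ref{l3-9}); and the final assembly through the comparison $\hat{c}_{M,\gamma,\epsilon}(x)\leq c^*_{\gamma,\epsilon}(x)$ and Theorem~\ref{t3-1}. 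The only cosmetic deviation is that you also invoke Assumption~\ref{a2-1} in the truncation step to bound the number of cost-incurring stages, whereas the paper's Lemma~\ref{l3-8} needs only Assumption~\ref{a2-3} there; this does not affect correctness.
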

%+++++++++++++++++++++++++++++++++++++++++++++++++++++++++++++++++++++++++++++++++++++++++++++++++++++++++++++++++++++++++++%
\subsubsection{Case (M)}
%+++++++++++++++++++++++++++++++++++++++++++++++++++++++++++++++++++++++++++++++++++++++++++++++++++++++++++++++++++++++++++%
Let $\hat{\Pi}^{(M)}$ be a set of all stationary and deterministic policies such that $\pi=(\mu,\mu,\cdots)$ and $\mu:\mathcal X_o^{(M)}\ni x\mapsto u\in \mathcal U^{(M)}(x)$.
$\mathcal X_o^{(M)}$ is finite, so $\hat{\Pi}^{(M)}$ is also finite.
Let $\hat{\Pi}_P^{(M)}\subset \hat{\Pi}^{(M)}$ be a set of all policies for which the probability of reaching the state $(0,s)$ or a state in $\mathcal X\times \{f\}$ is 1.
We approximate $J_{obj}^{(M)}$ as  
\begin{align}
J_{obj,\gamma}^{(M)}((x,s),\pi)=E^{\mathcal M_o^{(M)}}\Bigl\{\sum_{t=0}^{\infty}\gamma^tG_{o}^{(M)}(x'_t,u'_t,w'_t)\Bigl|x'_0=(x,s)\Bigr\}.
\end{align}
Then, we make the new MDP $\mathcal M_{o,\gamma}^{(M)}$ by changing the transition probability of $\mathcal M_{o}^{(M)}$ to $p_{d,\gamma}^{(M)}$, and 
\begin{align}
J_{obj,\gamma}^{(M)}((x,s),\pi)=E^{\mathcal M_{o,\gamma}^{(M)}}\Bigl\{\sum_{t=0}^{\infty}G_{o}^{(M)}(x'_t,u'_t,w'_t)\Bigl|x'_0=(x,s)\Bigr\}.
\end{align}
We define approximations $\hat{J}_{c,\gamma,\epsilon}$ and $\hat{J}^*_{c,\gamma,\epsilon}$ to $J_{c,\gamma,\epsilon}$ and $J^*_{c,\gamma,\epsilon}$ as 
\begin{align*}
\hat{J}_{c,\gamma,\epsilon}((x,s),\hat{\alpha},\pi)&=\hat{\alpha}(\mathcal M_{o,\gamma}^{(M)})J^{(M)}_{obj,\gamma}((x,s);\pi)+\frac{c}{\epsilon}\hat{\alpha}(\mathcal M_{d,\gamma}^{(M)})J^{(M)}_{cond}((x,s);\pi)
\end{align*}
and $\hat{J}^*_{c,\gamma,\epsilon}((x,s),\hat{\alpha})=\min_{\pi\in \Pi}\hat{J}_{c,\gamma,\epsilon}((x,s),\hat{\alpha},\pi)$. 
Here, $\hat{\alpha}$ is an element of the set $\hat{\mathcal A}$, which is a set of distributions on the set $\{\mathcal M_{o,\gamma}^{(M)},\mathcal M_{d,\gamma}^{(M)}\}$. 
$\hat{J}^*_{c,\gamma,\epsilon}$ is calculated by VI.
Therefore, we use the operator $T_{\hat{\alpha}}$ of $\hat{J}_M:\mathcal X\times \{s,f\}\rightarrow [0,\infty)$ parametrized by $\hat{\alpha}\in \hat{\mathcal A}$ such that 
\begin{align*}
(T_{\hat{\alpha}}\hat{J}_M)(x,i)=&\min_{u\in \mathcal U(x)}\sum_{w\in \mathcal W}p(w|x,u)\Bigl\{\hat{\alpha}(\mathcal M_{o,\gamma}^{(M)})G_o^{(M)}((x,i),u,w)\nonumber\\
&~~~~+\frac{c}{\epsilon}\hat{\alpha}(\mathcal M_{d,\gamma}^{(M)})G_{d,\gamma}^{(M)}((x,i),u,w)+\gamma \hat{J}_M(F_o^{(M)}((x,i),u,w))\Bigr\},\nonumber
\end{align*} 
and $\hat{J}^*_{c,\gamma,\epsilon}((x,s),\hat{\alpha})=\lim_{N\rightarrow \infty}T_{\hat{\alpha}}^N\hat{J}_{M,\hat{\alpha},0}(x,s)$ where $\hat{J}_{M,\hat{\alpha},0}\equiv 0$. 
Theorem \ref{t3-11} shows that by making $\gamma$ sufficiently close to 1, a sufficiently accurate approximation to the optimal policy for the approximation problem can be obtained.
%---------------------------------------------------------------------------------------------------------------------------%
\begin{thm}
\label{t3-11}
%---------------------------------------------------------------------------------------------------------------------------%
Let $\mathcal P^{(M)}$ be a set of all probability distributions on $\hat{\Pi}^{(M)}$.
For the initial state $x\in \mathcal X$, we define 
\begin{align}
&\hat{c}_{\gamma,\epsilon}(x)\label{c_gamma_def_for_M}\\
&=\min \Bigl\{c\in [0,\infty)\Bigl|\min_{P^{(M)}\in \mathcal P^{(S)}}\max_{\hat{\alpha}\in \hat{\mathcal A}}\!\!\!\sum_{\pi\in \hat{\Pi}^{(M)}}P^{(M)}(\pi)\hat{J}_{c,\gamma,\epsilon}(x',\hat{\alpha},\pi)\!\leq\! c,x'=(x,s)\Bigr\}\nonumber
\end{align}
and 
\begin{align}
P_{\gamma,\epsilon}^{(M)}\in \arg\min_{P^{(M)}\in \mathcal P^{(M)}}\Bigl\{\max_{\hat{\alpha}\in \hat{\mathcal A}}\sum_{\pi\in \hat{\Pi}^{(M)}}P^{(M)}(\pi)\hat{J}_{\hat{c}_{M,\gamma,\epsilon}(x),\gamma,\epsilon}(x',\hat{\alpha},\pi)\Bigr\}.\label{mix_policy_stat_for_M}
\end{align}
$P_{\gamma,\epsilon}^{(M)}$ is a mixed policy, and the equivalent policy $\hat{\pi}^*_{\gamma,\epsilon}\in \Pi$ exists as mentioned in Section \ref{sec2-2}.
Then, 
\begin{align*}
&\lim_{\gamma\uparrow 1}\min_{\pi\in \hat{\Pi}^{\gamma}_{\epsilon}}J_{\mbox{MCMP}}(x,\pi)=\lim_{\gamma\uparrow 1}\hat{c}_{\gamma,\epsilon}(x),\\
&\lim_{\gamma\uparrow 1}\min_{\pi\in \hat{\Pi}^{\gamma}_{\epsilon}}J_{\mbox{MCMP}}(x,\pi)=\lim_{\gamma \uparrow 1}J_{\mbox{MCMP}}(x,\hat{\pi}^*_{\gamma,\epsilon}),
\end{align*}
and $\hat{\pi}^*_{\gamma,\epsilon}\in \hat{\Pi}^{\gamma}_{x,\epsilon}$.  
\end{thm}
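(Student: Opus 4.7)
The plan is to mirror the proof of Theorem~\ref{t3-5}, exploiting the crucial simplification that in case (M) the augmented state space $\mathcal X_o^{(M)}=\mathcal X\times\{s,f\}\cup\{*\}$ is already finite, so neither the cost truncation $M$ nor the auxiliary stationary policy $\pi'_{or}$ used in case (S) is needed. First I would argue, for each fixed $\hat\alpha\in\hat{\mathcal A}$, that $\min_{\pi\in\Pi}\hat J_{c,\gamma,\epsilon}((x,s),\hat\alpha,\pi)$ is attained on $\hat\Pi^{(M)}$: the one-stage cost is bounded, $0<\gamma<1$ makes the operator $T_{\hat\alpha}$ a contraction on $\R^{|\mathcal X_o^{(M)}|}$, and standard finite-MDP theory then supplies a stationary deterministic minimizer in $\hat\Pi^{(M)}$. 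Hence the inner minimization in (\ref{c_gamma_def_for_M}) may equivalently be taken over $\mathcal P^{(M)}$, over $\hat\Pi^{(M)}$, or over all of $\Pi$, without changing the value.

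Next I would establish the minimax exchange
\[
\min_{P^{(M)}\in \mathcal P^{(M)}}\max_{\hat\alpha\in\hat{\mathcal A}}\sum_{\pi\in\hat\Pi^{(M)}}P^{(M)}(\pi)\hat J_{c,\gamma,\epsilon}((x,s),\hat\alpha,\pi)=\max_{\hat\alpha\in\hat{\mathcal A}}\min_{P^{(M)}\in\mathcal P^{(M)}}\sum_{\pi\in\hat\Pi^{(M)}}P^{(M)}(\pi)\hat J_{c,\gamma,\epsilon}((x,s),\hat\alpha,\pi).
\]
This is von Neumann's minimax theorem for a bilinear game on the simplex $\mathcal P^{(M)}$ (supported on the finite set $\hat\Pi^{(M)}$) against the one-dimensional simplex $\hat{\mathcal A}$. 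Combined with Theorem~\ref{t3-3}, which gives the analogous exchange for the original game value over $\Pi$, this identifies $\hat c_{\gamma,\epsilon}(x)$ with the finite-$\gamma$ surrogate of $c^*_{\gamma,\epsilon}(x)$ computed by the VI in Section~\ref{sec3-3}.

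I would then convert the surrogate back to the original problem. Because $\mathcal X_o^{(M)}$ is finite and Assumption~\ref{a2-3} forces $J^{(M)}_{obj}(\cdot;\pi)=\infty$ for any $\pi$ with positive failure probability, any $\hat c_{\gamma,\epsilon}$-optimal mixture must for large enough $\gamma$ be supported in $\hat\Pi_P^{(M)}$, and on that finite set $\hat J_{c,\gamma,\epsilon}\to J_{c,1,\epsilon}$ uniformly as $\gamma\uparrow 1$. Applying Theorem~\ref{t3-2} (which gives $c^*_{\gamma,\epsilon}(x)=\min_{\pi\in\hat\Pi^\gamma_{x,\epsilon}}J_{\mbox{MCMP}}(x;\pi)$) and Theorem~\ref{t2-1} (which gives $L_d^\gamma(x;\pi)\to\mbox{Pr}(\forall t,\,x_t\neq 0)$) then yields both displayed limit identities. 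The constraint $\hat\pi^*_{\gamma,\epsilon}\in\hat\Pi^\gamma_{x,\epsilon}$ follows directly from (\ref{c_gamma_def_for_M}): evaluating the inner maximum at the point $\hat\alpha$ concentrated on $\mathcal M^{(M)}_{d,\gamma}$ gives $(c/\epsilon)\sum_\pi P^{(M)}(\pi)J^{(M)}_{cond}((x,s);\pi)\le c$, i.e., $L_d^\gamma(x;\hat\pi^*_{\gamma,\epsilon})\le\epsilon$.

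The main obstacle I anticipate is the very first step: justifying that a mixture over the \emph{stationary} class $\hat\Pi^{(M)}$ attains the full BAMDP optimum, even though Theorem~\ref{t3-4} constructs the near-optimal mixed policy from non-stationary, belief-dependent policies $\pi_\delta^\alpha$ whose action choice tracks the evolving posterior. The resolution should hinge on the fact that the belief-maximizer has only two pure strategies ($\mathcal M^{(M)}_o$ versus $\mathcal M^{(M)}_{d,\gamma}$), so every extreme-point optimal solution of the associated matrix game is a mixture of at most two stationary deterministic policies; pushing $\delta\downarrow 0$ in Theorem~\ref{t3-4} and identifying the limiting mixture with such an extreme point on the polytope of achievable $(J^{(M)}_{obj,\gamma},J^{(M)}_{cond})$ pairs is the step that will require the most care.
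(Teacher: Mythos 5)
Your proposal follows essentially the same route as the paper's Appendix~\ref{secA4}: reduce to the finite class $\hat{\Pi}^{(M)}$ of stationary deterministic policies (the paper cites Proposition~5.11 of Bertsekas~1978 where you invoke contraction), apply the matrix-game minimax, show the discounted surrogate converges to the undiscounted game value as $\gamma\uparrow 1$ (the paper does this via $(\vI-\gamma\vA)^{-1}\vb\rightarrow(\vI-\vA)^{-1}\vb$ on $\hat{\Pi}_P^{(M)}$, treating improper policies with finite limiting cost by replacing them with cost-equivalent members of $\hat{\Pi}_P^{(M)}$ rather than excluding them from the support as you assert), and assemble the result via Theorem~\ref{t3-2} together with $\hat{c}_{\gamma,\epsilon}(x)\leq c^*_{\gamma,\epsilon}(x)$. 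The ``main obstacle'' you flag at the end is resolved in the paper exactly as you anticipate: the stationarity reduction is applied for each fixed $\hat{\alpha}$ and the two-strategy structure of the adversary's simplex does the rest.
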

%%%%%%%%%%%%%%%%%%%%%%%%%%%%%%%%%%%%%%%%%%%%%%%%%%%%%%%%%%%%%%%%%%%%%%%%%%%%%%%%%%%%%%%%%%%%%%%%%%%%%%%%%%%%%%%%%%%%%%%%%%%%%
\section{Motion Planning with Obstacle Avoidance\label{sec4}}
%%%%%%%%%%%%%%%%%%%%%%%%%%%%%%%%%%%%%%%%%%%%%%%%%%%%%%%%%%%%%%%%%%%%%%%%%%%%%%%%%%%%%%%%%%%%%%%%%%%%%%%%%%%%%%%%%%%%%%%%%%%%%
In this section, we apply the proposed methods to a mobile robot motion planning with obstacle avoidance on a two-dimensional plane, based on a paper by \cite{bhattacharya2006}.
%===========================================================================================================================%
\subsection{Problem Settings\label{sec4-1}}
%===========================================================================================================================%
As shown in Fig.~\ref{fig/obstacle_avoidance.eps}, we examine the path taken by a mobile robot to the goal by avoiding collisions with the wall and two obstacles.
The mobile robot is dealt with as a point to simplify the discussion.
\begin{figure}[tb]
\centering
\includegraphics[width=0.6\linewidth,clip]{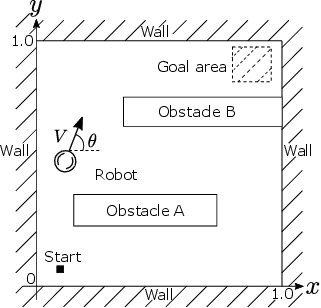}
\caption{Motion planning with obstacle avoidance: Deriving a policy to reach the goal quickly while reducing the probability of collision with obstacles A, B and the wall to $\epsilon=0.05$}\label{fig/obstacle_avoidance.eps}
\end{figure}
Let $(x,y)$ be a position and $\theta$[rad] be a rotation angle.
The range in which the robot moves is limited to $[0.0,1.0]^2$.
The area in which the robot moves is bounded by the wall.
The areas of obstacles A and B are $[0.15,0.7]\times [0.3,0.4]$ and $[0.4,1.0]\times [0.65,0.75]$.
The initial position of the robot is in $[0.1,0.15]^2$, and the initial rotation angle is in $[-\pi/12,\pi/12]$[rad]. 
The initial position and rotation are selected uniformly.  
The goal region is $[0.75,0.9]\times [0.8,0.95]$.
The robot stops when it reaches the goal.
An episode ends when the goal is reached or when a collision with obstacles A, B or the wall occurs.
The robot moves at a constant speed $V=1.0$ from the initial state to the goal.
The robot is controlled by changing $\theta$.
$\theta$ changes every time interval $\Delta t=0.1$[s].
The robot determines the target value of the amount of change in rotation angle $\Delta \theta$.
$\Delta \theta$ is within $[-\pi/3,\pi/3]$[rad].
$\theta(t+\Delta t)=\theta(t)+\Delta \theta+\xi$ is satisfied between the rotation angle $\theta(t)$ at time $t$ and the rotation angle $\theta(t+\Delta t)$ at time $t+\Delta t$.
$\xi$ is a control error, which is given according to a uniform distribution on $[-\pi/12,\pi/12]$[rad].
The purpose is to derive a policy that quickly reaches the goal while limiting the probability of colliding with obstacles A, B and the wall to $\epsilon=0.05$.
%===========================================================================================================================%
\subsection{Solutions\label{sec4-2}}
%===========================================================================================================================%
We discretize the problem described in Section \ref{sec4-1}. 
We discretize the time $t$ by the control time interval $\Delta t$.
$x$ and $y$ are each divided into 20 segments, and $\theta$ is divided into 12 segments.
The discretized positions and rotation angles constitute the state set $\mathcal X$. 
The target value of the rotation angle variation $\Delta \theta$ is also discretized.
We set the action set as a set of discretized $\Delta \theta$, $\mathcal C = \{-\pi/3,-\pi/6,0,\pi/6,\pi/3\}$.
A collision with obstacle A, B or the wall is considered a dead-end. 
We assume that after colliding with obstacle A or B, the one stage cost $g=1$ continues to occur. 
We consider two cases regarding the one stage cost $g$ that occurs after colliding with the wall: $g=1$ or $g=100$.
For this discretized problem, we construct policies $\pi_{max}^{(S)}$, $\pi_{max}^{(M)}$, $\pi_{mod}^{(S)}$ and $\pi_{mod}^{(M)}$. 
$\pi_{max}^{(S)}$ or $\pi_{max}^{(M)}$ is a policy obtained by optimizing $J_{\mbox{S}^3\mbox{P}}$ or $J_{\mbox{MCMP}}$ within the class of policies that minimize the probability of transitioning to a dead-end.
$\pi_{mod}^{(S)}$ or $\pi_{mod}^{(M)}$ is a policy obtained by optimizing $J^{\gamma}_{\mbox{S}^3\mbox{P}}$ or $J_{\mbox{MCMP}}$ within $\hat{\Pi}^{\gamma}_{x,\epsilon}$.
Here, the initial state $x$ is defined as a discrete state corresponding to the position $(0.125,0.125)$ and the rotation angle 0[rad]. 
The values of $\epsilon $ and $\gamma$ are 0.05 and 0.999, respectively.
%===========================================================================================================================%
\subsection{Results and Thoughts\label{sec4-3}}
%===========================================================================================================================%
We simulated 100,000 episodes for the policies set in Section \ref{sec4-2}. 
We evaluated these policies with the conditional expected total cost under the condition that the robot reaches the goal, the number of collisions with obstacles A and B and the number of collisions with the wall.
The results, when the one stage cost after colliding with the wall is $g=1$, are listed in Table~\ref{table_polcy_1}. 
The results, when $g=100$, are listed in Table~\ref{table_polcy_2}.
$\pi_{max}^{(S)}$ and $\pi_{max}^{(M)}$ are obtained by using the conventional methods in the class of policies with the lowest failure probability, so these policies are too conservative.
As shown in Fig.~\ref{fig/history_S3P.eps} and Fig.~\ref{fig/history_MCMP.eps}, in many episodes, the robot chooses a route to the right of obstacle A, which is less likely to collide with the wall or an obstacle but takes more time. 
\begin{figure}[tb]
    \centering
    \includegraphics[width=0.6\linewidth,clip]{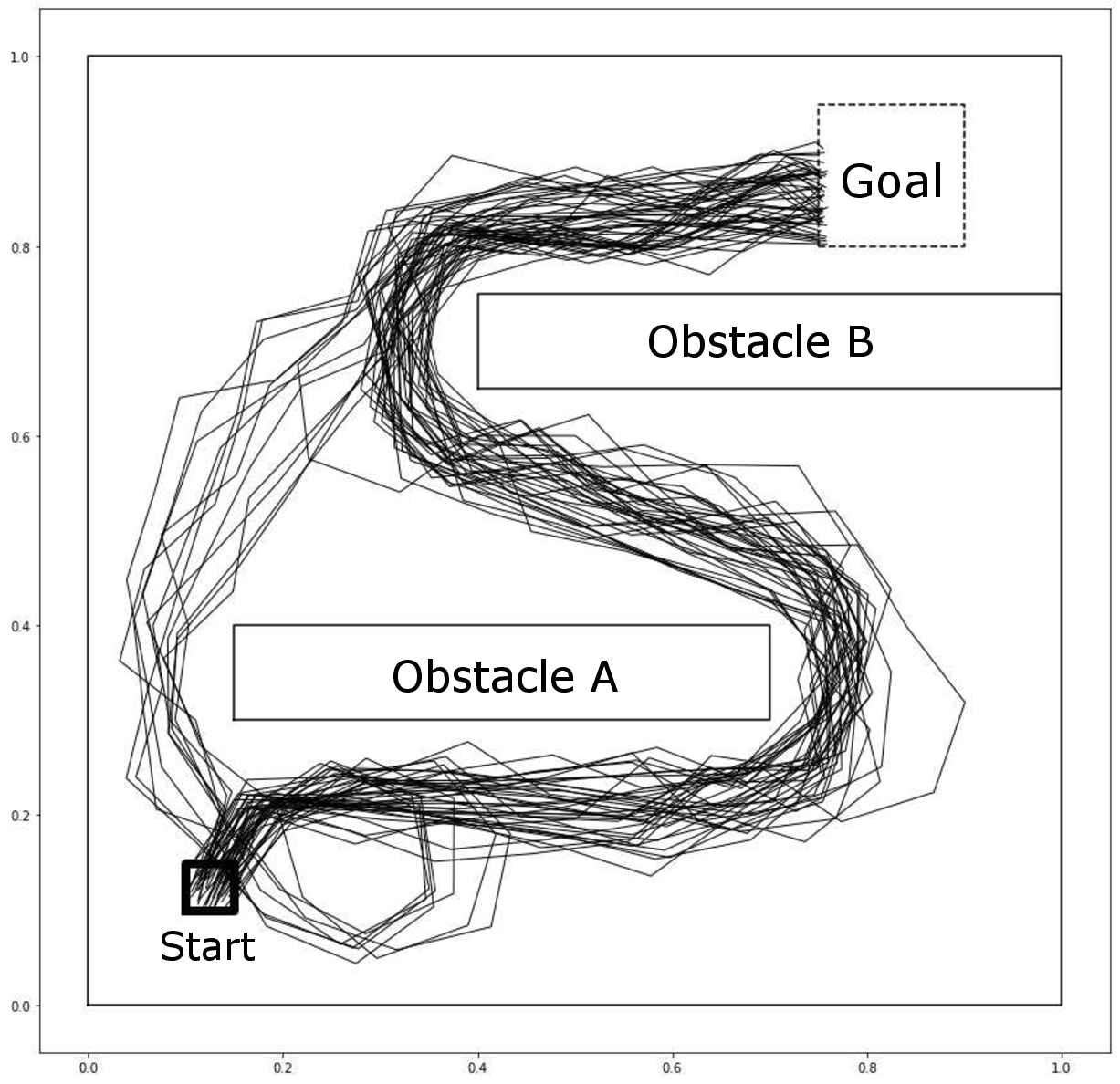}
    \caption{Trajectories for 50 episodes when $\pi_{max}^{(S)}$ is applied: The robot takes a large detour to the right of obstacle A in many episodes}\label{fig/history_S3P.eps}
\end{figure}
\begin{figure}[tb]
    \centering
    \includegraphics[width=0.6\linewidth,clip]{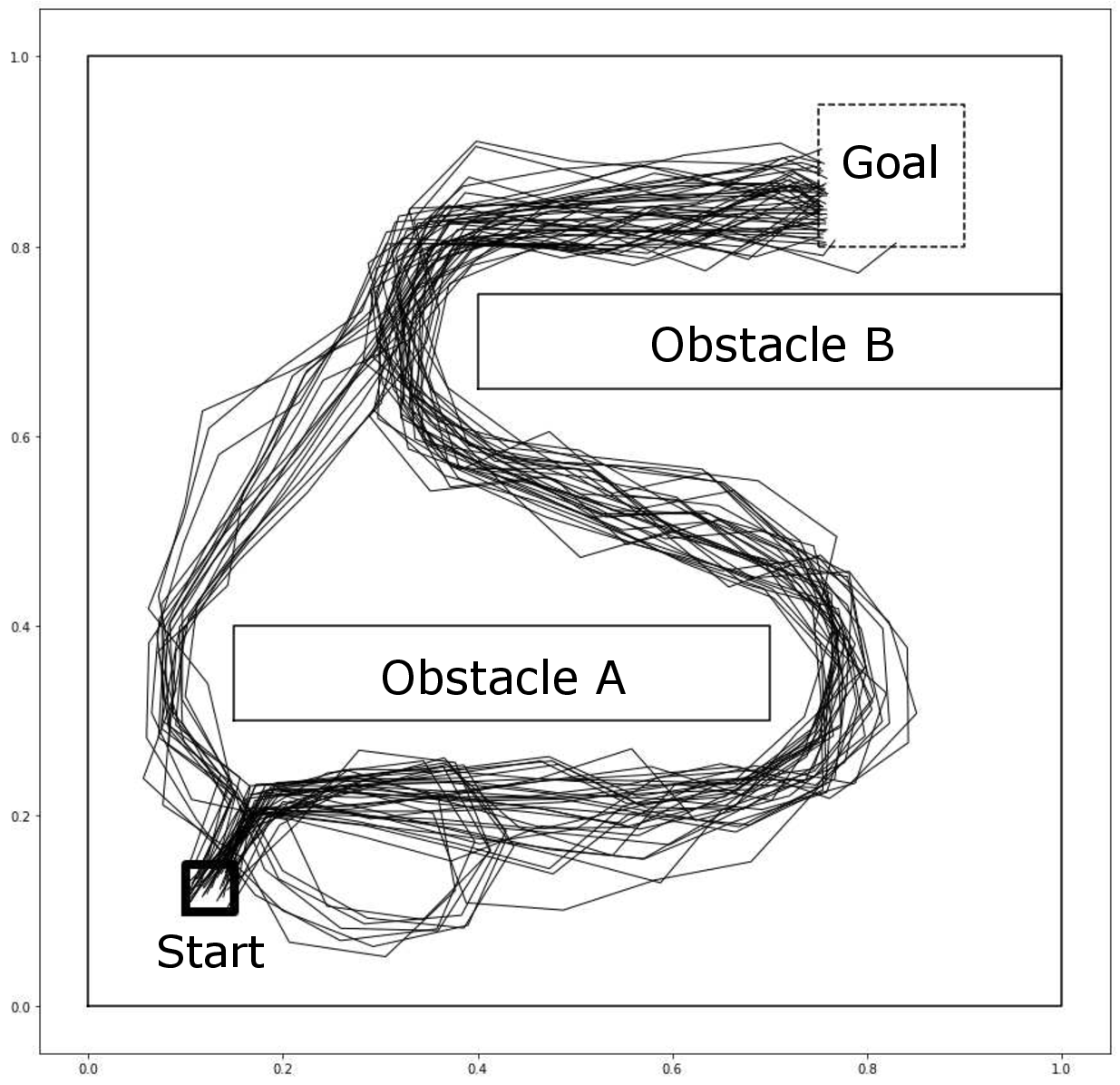}
    \caption{Trajectories for 50 episodes when $\pi_{max}^{(M)}$ is applied: The robot takes a large detour to the right of obstacle A in many episodes}\label{fig/history_MCMP.eps}
\end{figure}
On the other hand, $\pi_{mod}^{(S)}$ and $\pi_{mod}^{(M)}$ belong to the class of policies whose failure probability is less than 0.05.
Since the searched policy class is expanded, the total costs of these policies become smaller when the episodes are successful.
Compared to the conventional policies, trajectories that pass to the left of obstacle A are selected more often. 
This path has collision risks but reaches the goal with less cost, as shown in Fig.~\ref{fig/history_Pro_S3P.eps} and Fig.~\ref{fig/history_Pro_MCMP.eps}.
\begin{figure}[tb]
    \centering
    \includegraphics[width=0.6\linewidth,clip]{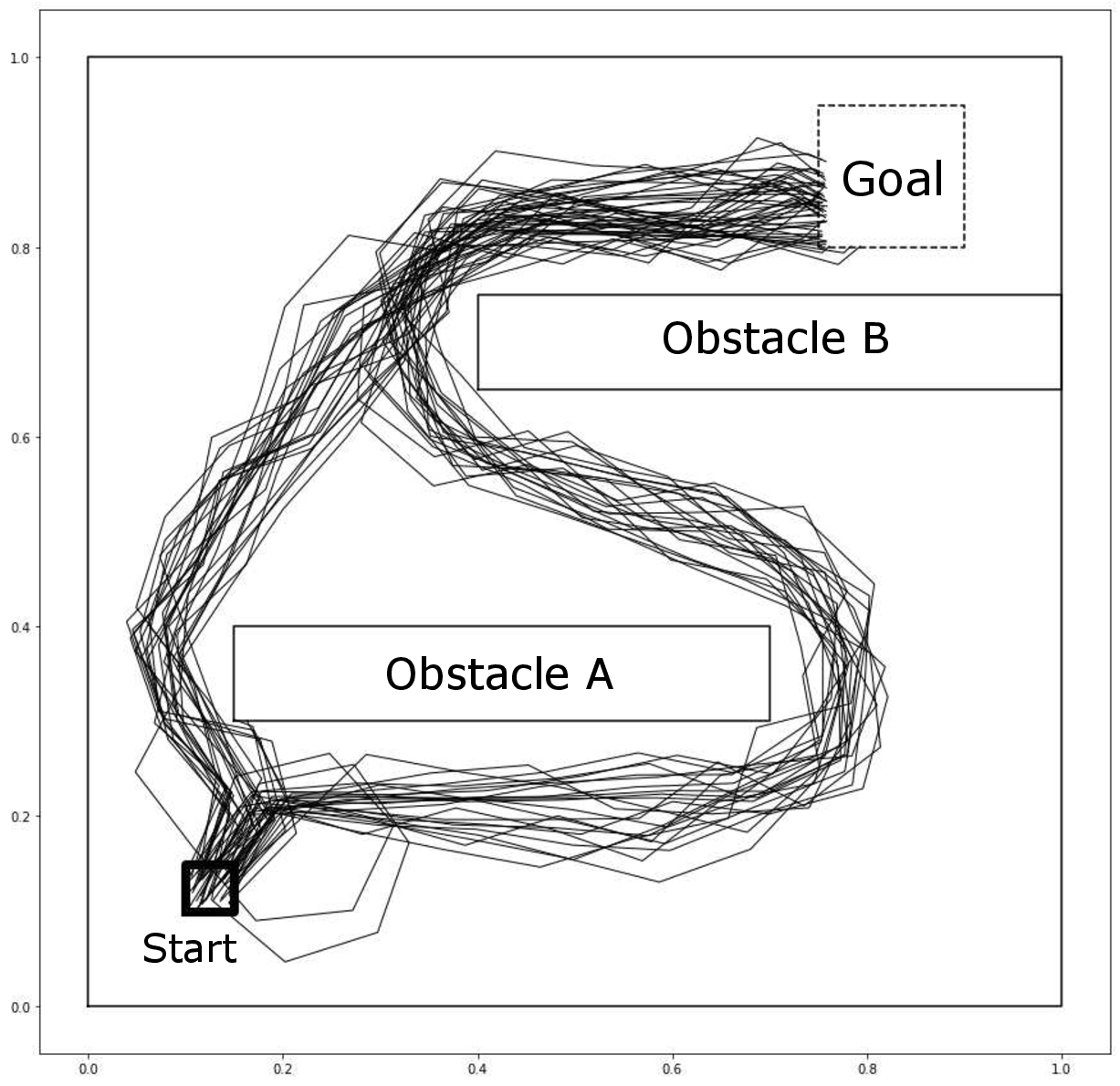}
    \caption{Trajectories for 50 episodes when $\pi_{mod}^{(S)}$ is applied: Frequency of selecting a trajectory that passes to the left of obstacle A, which has a collision risk but reaches the terminal state in less time compared to the trajectory when policy $\pi_{max}^{(S)}$, increases}\label{fig/history_Pro_S3P.eps}
\end{figure}
\begin{figure}[tb] 
    \centering
    \includegraphics[width=0.6\linewidth,clip]{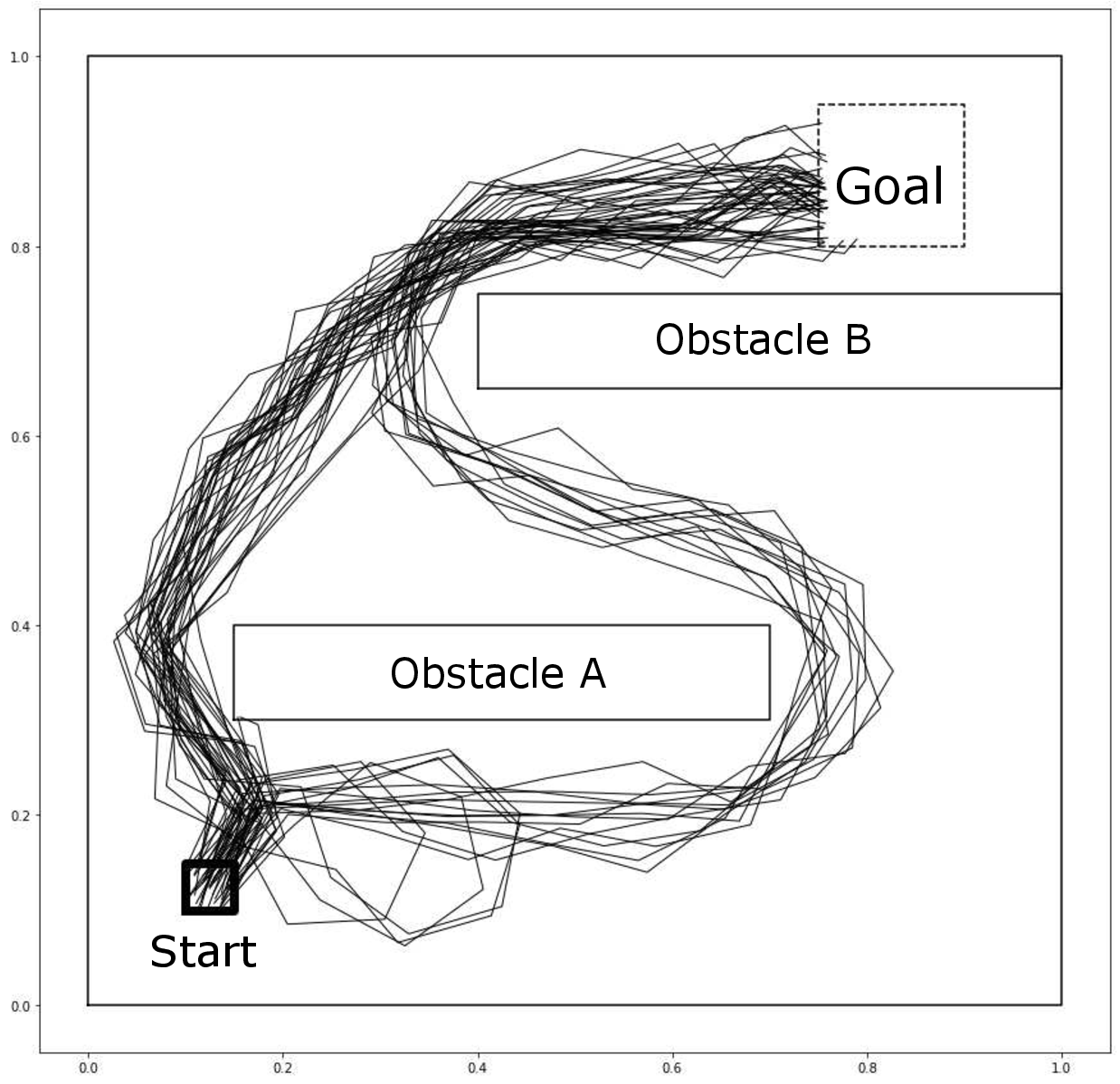}
    \caption{Trajectories for 50 episodes when $\pi_{mod}^{(M)}$ is applied: Frequency of selecting a trajectory that passes to the left of obstacle A, which has a collision risk but reaches the terminal state in less time compared to the trajectory when policy $\pi_{max}^{(M)}$, increases}\label{fig/history_Pro_MCMP.eps}
\end{figure}
Fig.~\ref{fig/history_S3P.eps}, \ref{fig/history_MCMP.eps}, \ref{fig/history_Pro_S3P.eps} and \ref{fig/history_Pro_MCMP.eps} show the trajectories of policies designed with the one stage cost after collision with the wall set as $g=1$.

Compare the properties of the policies when the one stage cost after collision with the wall is $g=1$ or $g=100$. 
For $\pi_{mod}^{(M)}$, a collision with the wall occurs when $g=1$, but doesn't occur when $g=100$.
This is because $\pi_{mod}^{(M)}$ is designed by taking into account the total costs of failed episodes.
\begin{table}[tb]
\centering
  \caption{Performance of each policy when the one stage cost after colliding with a wall is $g=1$}\label{table_polcy_1}
  \begin{tabular}{@{}lcccc@{}}  
  \toprule
                                                              & $\pi_{max}^{(S)}$ & $\pi_{max}^{(M)}$ & $\pi_{max}^{(S)}$ & $\pi_{mod}^{(M)}$\\ 
 \midrule
    Expected total cost under the condition that a task is successful & 19.34                & 19.32                  &16.10                 &16.11\\ 
    Number of collisions with obstacles A and B                    & 9                       & 12                      &2735                  &2764\\ 
    Number of collisions with the wall                                  & 0                       & 0                        &279                    &250\\ 
  \midrule   
  \end{tabular}
\end{table}
\begin{table}[tb]
\centering
  \caption{Performance of each policy when the one stage cost after colliding with a wall is $g=100$}\label{table_polcy_2}
  \begin{tabular}{@{}lcccc@{}}   
  \toprule
                                                              & $\pi_{max}^{(S)}$ & $\pi_{max}^{(M)}$ & $\pi_{max}^{(S)}$ & $\pi_{mod}^{(M)}$\\ 
 \midrule
    Expected total cost under the condition that a task is successful & 19.34                & 19.33                  &16.09                 &16.28\\ 
    Number of collisions with obstacles A and B                    & 15                     & 17                      &2692                  &3083\\ 
    Number of collisions with the wall                                  & 0                       & 0                        &264                    &0\\   
  \midrule   
  \end{tabular}
\end{table}
%%%%%%%%%%%%%%%%%%%%%%%%%%%%%%%%%%%%%%%%%%%%%%%%%%%%%%%%%%%%%%%%%%%%%%%%%%%%%%%%%%%%%%%%%%%%%%%%%%%%%%%%%%%%%%%%%%%%%%%%%%%%%
\section{Conclusion\label{sec5}}
%%%%%%%%%%%%%%%%%%%%%%%%%%%%%%%%%%%%%%%%%%%%%%%%%%%%%%%%%%%%%%%%%%%%%%%%%%%%%%%%%%%%%%%%%%%%%%%%%%%%%%%%%%%%%%%%%%%%%%%%%%%%%
First, we define SSP with failure probability.
Second, we constructed approximation problems to make these problems realistically solvable.
Third, we showed that the approximation problems can be treated as a combination of BAMDP and a two-person zero-sum game. 
Fourth, we derived an optimal policy. 
Finally, we showed the effectiveness of the proposed methods by applying them to a motion planning problem with obstacle avoidance for a mobile robot.

\bibliographystyle{apalike}
\bibliography{ref}

\begin{thebibliography}{}

\bibitem[Bertsekas and Shreve, 1978]{Bertsekas1978}
Bertsekas, D. and Shreve, S. (1978).
\newblock {\em Stochastic Optimal Control: The Discrete Time Case}.
\newblock Academic Press, New York.

\bibitem[Bertsekas, 2018]{Bertsekas2017}
Bertsekas, D.~P. (2018).
\newblock Proper {P}olicies in {I}nfinite-{S}tate {S}tochastic {S}hortest
  {P}ath {P}roblems.
\newblock {\em {IEEE} Trans. Autom. Control.}, 63(11):3787--3792.

\bibitem[Bertsekas and Tsitsiklis, 1996]{Bertsekas1996}
Bertsekas, D.~P. and Tsitsiklis, J.~N. (1996).
\newblock {\em Neuro-dynamic programming}.
\newblock Athena Scientific, Belmont, Mass.

\bibitem[Bhattacharya, 2006]{bhattacharya2006}
Bhattacharya, R. (2006).
\newblock {OPTRAGEN}: {A} {MATRAB} {T}oolbox for {O}ptimal {T}rajectory
  {G}eneration.
\newblock In {\em Proc. IEEE Conf. on Decision and Control}.

\bibitem[Crispino et~al., 2023]{Freire2023}
Crispino, G.~N., Freire, V., and Delgado, K. (2023).
\newblock $\alpha $-mcmp: {T}rade-{O}ffs {B}etween {P}robability and {C}ost in
  {SSP}s with the {MCMP} {C}riterion.
\newblock In {\em BRACIS 2023}.

\bibitem[Flynn, 1974]{Blackwell1962}
Flynn, J. (1974).
\newblock {A}veraging vs. {D}iscounting in {D}ynamic {P}rogramming: a
  {C}ounterexample.
\newblock {\em Ann. Math. Stat.}, 2(2):411--413.

\bibitem[Freire and Delgado, 2017]{Freire2017}
Freire, V. and Delgado, K.~V. (2017).
\newblock {GUBS}: a {U}tility-{B}ased {S}emantic for {G}oal-{D}irected {M}arkov
  {D}ecision {P}rocesses.
\newblock In {\em AAMAS 2017}.

\bibitem[Freire et~al., 2019]{Freire2019}
Freire, V., Delgado, K.~V., and Reis, W. A.~S. (2019).
\newblock {A}n {E}xact {A}lgorithm to {M}ake a {T}rade-{O}ff between {C}ost and
  {P}robability in {SSP}s.
\newblock In {\em ICAPS 2019}.

\bibitem[Guez et~al., 2012]{Dayan2012}
Guez, A., Silver, D., and Dayan, P. (2012).
\newblock Efficient {B}ayes-{A}daptive {R}einforcement {L}earning using
  {S}ample-{B}ased {S}earch.
\newblock In {\em NIPS 2012}.

\bibitem[Kolobov et~al., 2012a]{kolobovAAAI2012}
Kolobov, A., Mausam, and Weld, D.~S. (2012a).
\newblock {S}tochastic {S}hortest {P}ath {MDP}s with {D}ead {E}nds.
\newblock In {\em ICAPS 2012}.

\bibitem[Kolobov et~al., 2012b]{kolobovUAI2012}
Kolobov, A., Mausam, and Weld, D.~S. (2012b).
\newblock A {T}heory of {G}oal-{O}riented {MDP}s with {D}ead {E}nds.
\newblock In {\em UAI 2012}.

\bibitem[Kolobov et~al., 2011]{kolobovAAAI2011}
Kolobov, A., Mausam, Weld, D.~S., and Geffner, H. (2011).
\newblock {H}euristic {S}earch for {G}eneralized {S}tochastic {S}hortest {P}ath
  {MDP}s.
\newblock In {\em ICAPS 2011}.

\bibitem[Otsubo, 2024]{otsuboSSS23}
Otsubo, R. (2024).
\newblock {S}tochastic {S}hortest {P}ath {P}roblem on {B}orel {S}pace
  {C}onsidering {D}ead-ends and {U}ndesired {T}erminal {S}tates.
\newblock In {\em ISCIE SSS'23}.

\bibitem[Strang, 2006]{algebraStrang}
Strang, G. (2006).
\newblock {\em Linear algebra and its applications}.
\newblock Thomson, Brooks/Cole, Belmont, CA.

\bibitem[Teichteil-K\"{o}nigsbuch, 2012]{Konigsbuch2012}
Teichteil-K\"{o}nigsbuch, F. (2012).
\newblock {S}tochastic {S}afest and {S}hortest {P}ath {P}roblems.
\newblock In {\em AAAI 2012}.

\bibitem[Trevizan and Teichteil-K\"{o}nigsbuch, 2017]{Konigsbuch2017}
Trevizan, F. and Teichteil-K\"{o}nigsbuch, F. (2017).
\newblock {E}fficient {S}olutions for {S}tochastic {S}hortest {P}ath {P}roblems
  with {D}ead {E}nds.
\newblock In {\em UAI 2017}.

\bibitem[Trevizan et~al., 2016]{TrevizanAAAI2016}
Trevizan, F., Thiebaux, S., Santana, P., and Williams, B. (2016).
\newblock Heuristic {S}earch in {D}ual {S}pace for {C}onstrained {S}tochastic
  {S}hortest {P}ath {P}roblems.
\newblock In {\em ICAPS 2016}.

\end{thebibliography}

\appendix
%%%%%%%%%%%%%%%%%%%%%%%%%%%%%%%%%%%%%%%%%%%%%%%%%%%%%%%%%%%%%%%%%%%%%%%%%%%%%%%%%%%%%%%%%%%%%%%%%%%%%%%%%%%%%%%%%%%%%%%%%%%%%
\section{Proofs of Theorems~\ref{t2-0}, \ref{t2-1} and \ref{t2-2}}\label{secA0}
%%%%%%%%%%%%%%%%%%%%%%%%%%%%%%%%%%%%%%%%%%%%%%%%%%%%%%%%%%%%%%%%%%%%%%%%%%%%%%%%%%%%%%%%%%%%%%%%%%%%%%%%%%%%%%%%%%%%%%%%%%%%%
%===========================================================================================================================%
\subsection{Proof of Theorem~\ref{t2-0}\label{secA0-1}}
%===========================================================================================================================%
We define $G_T:\mathcal X^{\infty}\rightarrow [0,\infty)$ as
\begin{align*}
G_T(x_0,x_1,\cdots)=\frac{1}{T}\sum_{t=0}^{T-1}h_d(x_t)
\end{align*}
for each positive number $T$. 
If $(x_0,x_1,\cdots)\in X^{\infty}$ satisfies $x_t \neq 0$ for all non-negative number $t$, $G_T(x_0,x_1,\cdots)=1$ for all $T$. 
Therefore, 
\begin{align}
x_t \neq 0~\forall t =0,1,2,\cdots \Rightarrow \lim_{T\rightarrow \infty}G_T(x_0,x_1,\cdots)=1. \label{non_reach_A0-1}
\end{align}
If $(x_0,x_1,\cdots)\in X^{\infty}$ satisfies $x_t=0$ for some non-negative number $t$, $x_k=0$ for all $k>t$, and $G_T(x_0,x_1,\cdots)=t/T$ for all $T>t$.
Therefore, 
\begin{align}
x_t = 0~\exists t \in \{0,1,2,\cdots \}\Rightarrow \lim_{T\rightarrow \infty}G_T(x_0,x_1,\cdots)=0. \label{reach_A0-1}
\end{align}
From (\ref{non_reach_A0-1}) and (\ref{reach_A0-1}), we can define $G_{\infty}:\mathcal X^{\infty}\rightarrow [0,\infty)$ as $G_{\infty} = \lim_{T\rightarrow \infty}G_{T}$. 
Also, $G_{\infty}$ is 1 if $(x_0,x_1,\cdots)\in X^{\infty}$ satisfies $x_t \neq 0$ for all non-negative number $t$ otherwise 0. 
Therefore, 
\begin{align}
\mbox{Pr}(\forall t=0,1,\cdots,~x_t\neq 0|x_0=x,~\pi)
&=E^{\mathcal M,\pi}\Bigl\{G_{\infty}(x_0,x_1,\cdots)\Bigl|x_0=x\Bigr\}\nonumber\\
&=E^{\mathcal M,\pi}\Bigl\{\lim_{T\rightarrow \infty}\frac{1}{T}\sum_{t=0}^{T-1}h_d(x_t)\Bigl|x_0=x\Bigr\}. \label{main1_A0-1}
\end{align}

On the other hand, from the above discussion, 
\begin{align*}
\sup_{T=1,2,\cdots,~(x_0,x_1,\cdots)\in \mathcal X^{\infty}}|G_T(x_0,x_1,\cdots)|\leq 1.
\end{align*}
Therefore, form the dominated convergence theorem, 
\begin{align}
&E^{\mathcal M,\pi}\Bigl\{\lim_{T\rightarrow \infty}\frac{1}{T}\sum_{t=0}^{T-1}h_d(x_t)\Bigl|x_0=x\Bigr\}
=E^{\mathcal M,\pi}\Bigl\{G_{\infty}(x_0,x_1,\cdots)\Bigl|x_0=x\Bigr\}\nonumber\\
&~=\lim_{T\rightarrow \infty}E^{\mathcal M,\pi}\Bigl\{G_{T}(x_0,x_1,\cdots)\Bigl|x_0=x\Bigr\}
=\lim_{T\rightarrow \infty}E^{\mathcal M,\pi}\Bigl\{\frac{1}{T}\sum_{t=0}^{T-1}h_d(x_t)\Bigl|x_0=x\Bigr\}. \label{main2_A0-1}
\end{align}
From (\ref{main1_A0-1}) and (\ref{main2_A0-1}), (\ref{constrain_condition_origin1}) is derived.
%===========================================================================================================================%
\subsection{Proof of Theorem~\ref{t2-1}\label{secA0-2}}
%===========================================================================================================================%
We define  
\begin{align*}
p_N=\mbox{Pr}(x_N\neq 0|x_0=x,~\pi),~N=0,1,\cdots. 
\end{align*}
Then, for any $N=0,1,\cdots$, 
\begin{align}
  \hat{L}_d^{\gamma}(x;\pi)&=E^{\mathcal M,\pi}\Bigl\{\sum_{t=0}^{\infty}\gamma^th_d^{(\gamma)}(x_t)\Bigl|x_0=x\Bigr\}
  \leq p_N(1-\gamma)\frac{1}{1-\gamma}+(1-p_N)(1-\gamma)\frac{1-\gamma^N}{1-\gamma}\nonumber\\
  &=p_N+(1-p_N)(1-\gamma^N).\label{A0-2_part1}
\end{align}
When we take the limit with respect to $\gamma$ on both sides of (\ref{A0-2_part1}), we obtain
\begin{align*}
\lim_{\gamma \rightarrow 1}\hat{L}_d^{\gamma}(x;\pi)\leq p_N
\end{align*}
for any $N=0,1,\cdots$.
Since $p_N$ is monotonically decreasing with respect to $N$, 
\begin{align}
\lim_{\gamma \rightarrow 1}\hat{L}_d^{\gamma}(x,r;\pi)\leq \lim_{N\rightarrow \infty}p_N=\mbox{Pr}(\forall t,~x_t\neq 0~|~x_0=x,\pi). \label{A0-2_part2}
\end{align}

On the other hand,  
\begin{align*}
\hat{L}_d^{\gamma}(x;\pi)&\geq p_N(1-\gamma)\frac{1-\gamma^N}{1-\gamma}=p_N(1-\gamma^N).
\end{align*}
Therefore, when we take the limit with respect to $N$, 
\begin{align*}
\hat{L}_d^{\gamma}(x;\pi)\geq \lim_{N\rightarrow \infty}p_N=\mbox{Pr}(\forall t,~x_t\neq 0~|~x_0=x,\pi). 
\end{align*} 
Furthermore, when we take the limit with respect to $\gamma$, 
\begin{align}
\lim_{\gamma \rightarrow 1}\hat{L}_d^{\gamma}(x,r;\pi)=\mbox{Pr}(\forall t,~x_t\neq 0~|~x_0=x,\pi)\label{A0-2_part3}
\end{align}

From (\ref{A0-2_part2}) and (\ref{A0-2_part3}), (\ref{thm_2_1}) is derived.
%===========================================================================================================================%
\subsection{Proof of Theorem~\ref{t2-2}\label{secA0-3}}
%===========================================================================================================================%
Fixed $x\in \mathcal X$ and $\pi\in \Pi$, it is sufficient to show $\hat{L}_d^{\gamma}(x;\pi)$ is non-increasing with respect to $\gamma\in (0,1)$.
Here, $\hat{p}_N=\mbox{Pr}(x_{N-1}\neq 0\wedge x_{N}=0|~x_0=x,\pi),~N=1,2,\cdots$.
Then, 
\begin{align*}
\hat{L}_d^{\gamma}(x;\pi)=\mbox{Pr}(\forall t,~x_t\neq 0~|~x_0=x,\pi)+\sum_{N=1}^{\infty}\hat{p}_N(1-\gamma^{N}).
\end{align*}
For each $N$, $\hat{p}_N(1-\gamma^{N})$ is monotonically non-increasing with respect to $\gamma\in (0,1)$.
Therefore, $\hat{L}_d^{\gamma}(x;\pi)$ is monotonically non-increasing with respect to $\gamma\in (0,1)$.
%%%%%%%%%%%%%%%%%%%%%%%%%%%%%%%%%%%%%%%%%%%%%%%%%%%%%%%%%%%%%%%%%%%%%%%%%%%%%%%%%%%%%%%%%%%%%%%%%%%%%%%%%%%%%%%%%%%%%%%%%%%%%
\section{Proofs of Theorems~\ref{t3-1} and \ref{t3-2}}\label{secA1}
%%%%%%%%%%%%%%%%%%%%%%%%%%%%%%%%%%%%%%%%%%%%%%%%%%%%%%%%%%%%%%%%%%%%%%%%%%%%%%%%%%%%%%%%%%%%%%%%%%%%%%%%%%%%%%%%%%%%%%%%%%%%%
The following Lemma~\ref{t3-m1} is used to prove Theorem~\ref{t3-1} and Theorem~\ref{t3-2}.
%---------------------------------------------------------------------------------------------------------------------------%
\begin{lem}
\label{t3-m1}
%---------------------------------------------------------------------------------------------------------------------------%
For all $c'\geq c_{\gamma,\epsilon}^*(x)$, 
\begin{align}
\min_{\pi\in \Pi}\Bigl\{\max_{\alpha\in \mathcal A}J_{c',\gamma,\epsilon}(x',\alpha,\pi)\Bigr\}\leq \eta(\epsilon;c'),~x'=(x,0),\label{excuse}
\end{align}
and 
\begin{align}
\pi^*_{c',\gamma,\epsilon}\in \hat{\Pi}^{\gamma}_{x,\epsilon}.\label{excuse_policy}
\end{align}
\end{lem}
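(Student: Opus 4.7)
My plan hinges on a simple vertex reduction of the inner max. Since $\alpha$ ranges over the probability simplex on the two-point set $\{\mathcal M_o,\mathcal M_{d,\gamma}\}$ and both summands in (\ref{j_c_gamma_def}) are non-negative, the inner maximum is attained at an extreme point of $\mathcal A$:
\begin{align*}
\max_{\alpha\in\mathcal A} J_{c,\gamma,\epsilon}(x',\alpha,\pi) = \max\Bigl\{J_{obj}(x';\pi),\ \tfrac{\eta(\epsilon;c)}{\epsilon}J_{cond}(x';\pi)\Bigr\}.
\end{align*}
So the inequality appearing in the definition (\ref{def_c*}) of $c^*_{\gamma,\epsilon}(x)$ is equivalent to the joint feasibility of $J_{obj}(x';\pi)\le\eta(\epsilon;c)$ and $J_{cond}(x';\pi)\le\epsilon$; the second condition is exactly $L_d^\gamma(x;\pi)\le\epsilon$, i.e., $\pi\in\hat{\Pi}^\gamma_{x,\epsilon}$, by the identities $J_{cond}^{(S)}((x,0);\pi)=J_{cond}^{(M)}((x,s);\pi)=L_d^\gamma(x;\pi)$ recorded in Section~\ref{sec3-1}.

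To prove (\ref{excuse}), I would first invoke attainment at $c=c^*_{\gamma,\epsilon}(x)$ (minimizers exist by the claim in Section~\ref{sec3-2}, to be supplied in the proofs of Theorems~\ref{t3-3} and~\ref{t3-4}) to pick a witness $\tilde\pi\in\Pi$ with $J_{obj}(x';\tilde\pi)\le\eta(\epsilon;c^*_{\gamma,\epsilon}(x))$ and $J_{cond}(x';\tilde\pi)\le\epsilon$. Then I would exploit the monotonicity of $c\mapsto\eta(\epsilon;c)$, which is non-decreasing in both case (S) ($\eta=c(1-\epsilon)$) and case (M) ($\eta=c$), to evaluate at the same $\tilde\pi$ for any $c'\ge c^*_{\gamma,\epsilon}(x)$:
\begin{align*}
\max_\alpha J_{c',\gamma,\epsilon}(x',\alpha,\tilde\pi)
= \max\Bigl\{J_{obj}(x';\tilde\pi),\ \tfrac{\eta(\epsilon;c')}{\epsilon}J_{cond}(x';\tilde\pi)\Bigr\}
\le \max\{\eta(\epsilon;c^*_{\gamma,\epsilon}(x)),\ \eta(\epsilon;c')\}
= \eta(\epsilon;c').
\end{align*}
Minimizing the left side over $\pi\in\Pi$ then yields (\ref{excuse}).

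For (\ref{excuse_policy}), I would use (\ref{excuse}) together with the fact that $\pi^*_{c',\gamma,\epsilon}$ attains the min in (\ref{two_zero_sum_game_j_s_gamma}) to get $\max_\alpha J_{c',\gamma,\epsilon}(x',\alpha,\pi^*_{c',\gamma,\epsilon})\le\eta(\epsilon;c')$. Specializing the max to the vertex $\alpha(\mathcal M_{d,\gamma})=1$ produces $\tfrac{\eta(\epsilon;c')}{\epsilon}J_{cond}(x';\pi^*_{c',\gamma,\epsilon})\le\eta(\epsilon;c')$, and cancelling the positive factor $\eta(\epsilon;c')/\epsilon$ gives $J_{cond}(x';\pi^*_{c',\gamma,\epsilon})\le\epsilon$, i.e., $L_d^\gamma(x;\pi^*_{c',\gamma,\epsilon})\le\epsilon$.

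The argument is otherwise routine; the only genuine technical subtlety is the appeal to attainment of minima rather than merely infima, which is deferred to Theorems~\ref{t3-3} and~\ref{t3-4}. A trivial degenerate case arises when $c'=0$, where the cancellation in the last step is invalid, but this only occurs if $\min_\pi J_{obj}(x';\pi)=0$ and is handled by direct inspection.
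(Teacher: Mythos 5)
Your proposal is correct and follows essentially the same route as the paper: both exhibit $\pi^*_{c^*_{\gamma,\epsilon}(x),\gamma,\epsilon}$ as the witness policy, use the monotonicity of $c\mapsto\eta(\epsilon;c)$ together with the vertex reduction $\max_{\alpha}J_{c,\gamma,\epsilon}(x',\alpha,\pi)=\max\{J_{obj}(x';\pi),\tfrac{\eta(\epsilon;c)}{\epsilon}J_{cond}(x';\pi)\}$ to get (\ref{excuse}), and then read off (\ref{excuse_policy}) from the $\alpha(\mathcal M_{d,\gamma})=1$ vertex. The only difference is that you spell out the cancellation step behind (\ref{excuse_policy}) (which the paper dismisses as obvious) and flag the degenerate case $\eta(\epsilon;c')=0$ and the deferred attainment issue, both of which the paper leaves implicit.
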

\begin{proof}
First, we prove (\ref{excuse}).
From the fact that $J_{obj}$ doesn't depend on $c$ and the definition of $c_{\gamma,\epsilon}^*(x)$, $J_{obj}(x';\pi^*_{c_{\gamma,\epsilon}^*(x),\gamma,\epsilon})\leq \eta(\epsilon;c_{\gamma,\epsilon}^*(x))\leq \eta(\epsilon;c')$, and $\pi^*_{c_{\gamma,\epsilon}^*(x),\gamma,\epsilon}\in \hat{\Pi}^{\gamma}_{x,\epsilon}$.
Therefore, $\min_{\pi\in \Pi}\max_{\alpha\in \mathcal A}J_{c',\gamma,\epsilon}(x',\alpha,\pi)\leq \max_{\alpha\in \mathcal A}J_{c',\gamma,\epsilon}(x',\alpha,\pi^*_{c_{\gamma,\epsilon}^*(x),\gamma,\epsilon})\leq \eta(\epsilon;c')$. 
(\ref{excuse_policy}) is obvious from (\ref{j_c_gamma_def}) and (\ref{two_zero_sum_game_j_s_gamma}).
\end{proof}
%===========================================================================================================================%
\subsection{Proof of Theorem~\ref{t3-1}\label{secA1-1}}
%===========================================================================================================================%
From (\ref{two_zero_sum_game_j_s_gamma}) and (\ref{def_c*}), $\pi^*_{c_{\gamma,\epsilon^*}^*(x),\gamma,\epsilon^*}\in \hat{\Pi}^{\gamma}_{x,\epsilon}$ is obvious.
We select a policy $\pi'\in \arg\min_{\pi\in \hat{\Pi}^{\gamma}_{x,\epsilon}}J^{\gamma}_{\mbox{S}^3\mbox{P}}(x,\pi)$.
For $\pi'$ and $x\in \mathcal X$, we define $\epsilon_{x,\pi'}=L_d^{\gamma}(x;\pi')$ and $c'=\min_{\pi\in \hat{\Pi}^{\gamma}_{x,\epsilon}}J^{\gamma}_{\mbox{S}^3\mbox{P}}(x,\pi)$. 
%We show that the optimal policy $\pi^*_{c_{\gamma,\epsilon_{x,\pi'}}^*(x),\gamma,\epsilon_{x,\pi'}}$ for the two-player zero-sum game parameterized by $c'$, $\epsilon_{x,\pi'}$ and $\gamma$ and the policy which minimizes $J^{\gamma}_{\mbox{S}^3\mbox{P}}$ within $\hat{\Pi}^{\gamma}_{x,\epsilon}$ are the same.
We show that $\pi^*_{c_{\gamma,\epsilon_{x,\pi'}}^*(x),\gamma,\epsilon_{x,\pi'}}\in \arg\min_{\pi\in \hat{\Pi}^{\gamma}_{x,\epsilon}}J^{\gamma}_{\mbox{S}^3\mbox{P}}(x,\pi)$. 
Since $\pi'\in \hat{\Pi}^{\gamma}_{x,\epsilon_{x,\pi'}}$, then, 
\begin{align}
&\min_{\pi\in \Pi}\max_{\alpha\in \mathcal A}J_{c',\gamma,\epsilon_{x,\pi'}}(x',\alpha,\pi)
\leq\max_{\alpha\in \mathcal A}J_{c',\gamma,\epsilon_{x,\pi'}}(x',\alpha,\pi')\nonumber\\
&~~~=\max\Bigl\{J^{(S)}_{obj}(x';\pi'),\frac{(1-\epsilon_{x,\pi'})c'}{\epsilon_{x,\pi'}}J^{(S)}_{cond}(x';\pi')\Bigr\}\nonumber\\
&~~~=\max\Bigl\{(1-L_d^{\gamma}(x;\pi'))J^{\gamma}_{\mbox{S}^3\mbox{P}}(x',\pi'),\frac{(1-\epsilon_{x,\pi'})c'}{\epsilon_{x,\pi'}}L_d^{\gamma}(x;\pi')\Bigr\}=(1-\epsilon_{x,\pi'})c'.\label{c_prime_satisfy}
\end{align}
Here, $x'=(x,0)$. 
From (\ref{c_prime_satisfy}), $c_{\gamma,\epsilon_{x,\pi'}}^*(x)\leq c'$.
We assume that $c_{\gamma,\epsilon_{x,\pi'}}^*(x)< c'$. 
From (\ref{two_zero_sum_game_j_s_gamma}) and (\ref{def_c*}), 
%\begin{align*}
%&\frac{(1-\epsilon_{x,\pi'})c_{\gamma,\epsilon_{x,\pi'}}^*(x)}{\epsilon_{x,\pi'}}L_d^{\gamma}(x;\pi^*_{c_{\gamma,\epsilon_{x,\pi'}}^*(x),\gamma,\epsilon_{x,\pi'}})\nonumber\\
%&~~~~\leq \max_{\alpha\in \mathcal A^{(S)}}J^*_{c_{\gamma,\epsilon_{x,\pi'}}^*(x),\gamma,\epsilon_{x,\pi'}}(x',\alpha,\pi^*_{c_{\gamma,\epsilon_{x,\pi'}}^*(x),\gamma,\epsilon_{x,\pi'}})\leq (1-\epsilon_{x,\pi'})c_{\gamma,\epsilon_{x,\pi'}}^*(x)
%\end{align*}
%is derived, and 
\begin{align}
L_d^{\gamma}(x;\pi^*_{c_{\gamma,\epsilon_{x,\pi'}}^*(x),\gamma,\epsilon_{x,\pi'}})\leq \epsilon_{x,\pi'},\label{nature_pi_prime}
\end{align}
and $\pi^*_{c_{\gamma,\epsilon_{x,\pi'}}^*(x),\gamma,\epsilon_{x,\pi'}}\in \hat{\Pi}^{\gamma}_{x,\epsilon_{x,\pi'}}$.
Combining (\ref{two_zero_sum_game_j_s_gamma}) and (\ref{def_c*}) leads to
\begin{align}
J^{(S)}_{obj}(x';\pi^*_{c_{\gamma,\epsilon_{x,\pi'}}^*(x),\gamma,\epsilon_{x,\pi'}})&\leq \max_{\alpha\in \mathcal A^{(S)}}J^*_{c_{\gamma,\epsilon_{x,\pi'}}^*(x),\gamma,\epsilon_{\pi'}}(x',\alpha,\pi^*_{c_{\gamma,\epsilon_{x,\pi'}}^*(x),\gamma,\epsilon_{x,\pi'}})\nonumber\\
&\leq (1-\epsilon_{x,\pi'})c_{\gamma,\epsilon_{x,\pi'}}^*(x).\label{before_nature_pi_prime_c}
\end{align}
From (\ref{s3p_gamma_j_obj_appro}), (\ref{nature_pi_prime}), (\ref{before_nature_pi_prime_c}) and the assumption $c_{\gamma,\epsilon_{x,\pi'}}^*(x)< c'$, 
\begin{align}
&J^{\gamma}_{\mbox{S}^3\mbox{P}}(x;\pi^*_{c_{\gamma,\epsilon_{x,\pi'}}^*(x),\gamma,\epsilon_{x,\pi'}})
=\frac{1}{1-L_d^{\gamma}(x;\pi^*_{c_{\gamma,\epsilon_{x,\pi'}}^*(x),\gamma,\epsilon_{x,\pi'}})}J^{(S)}_{obj}(x';\pi^*_{c_{\gamma,\epsilon_{x,\pi'}}^*(x),\gamma,\epsilon_{x,\pi'}})\nonumber\\
&~~~~~~~~~~~~~~~~\leq \frac{1}{1-\epsilon_{x,\pi'}}J^{(S)}_{obj}(x';\pi^*_{c_{\gamma,\epsilon_{x,\pi'}}^*(x),\gamma,\epsilon_{x,\pi'}})\leq c_{\gamma,\epsilon_{x,\pi'}}^*(x)<c'.\label{nature_pi_prime_c}
\end{align}
(\ref{nature_pi_prime_c}) contradicts the definition of $c'$.
Therefore, $c_{\gamma,\epsilon_{x,\pi'}}^*(x)= c'$. 
We assume that $L_d^{\gamma}(x;\pi^*_{c_{\gamma,\epsilon_{x,\pi'}}^*(x),\gamma,\epsilon_{x,\pi'}})<\epsilon_{x,\pi'}$.
Then, 
\begin{align*}
&J^{(S)}_{obj}(x'_0;\pi^*_{c_{\gamma,\epsilon_{x,\pi'}}^*(x),\gamma,\epsilon_{x,\pi'}})\leq \max_{\alpha\in \mathcal A^{(S)}}J^*_{c_{\gamma,\epsilon_{x,\pi'}}^*(x),\gamma,\epsilon_{x,\pi'}}(x',\alpha,\pi^*_{c_{\gamma,\epsilon_{x,\pi'}}^*(x),\gamma,\epsilon_{x,\pi'}})\\
&~~~~\leq (1-\epsilon_{x,\pi'})c_{\gamma,\epsilon_{x,\pi'}}^*(x)< (1-L_d^{\gamma}(x;\pi^*_{c_{\gamma,\epsilon_{x,\pi'}}^*(x),\gamma,\epsilon_{x,\pi'}}))c_{\gamma,\epsilon_{x,\pi'}}^*(x).
\end{align*}
From (\ref{s3p_gamma_j_obj_appro}), 
\begin{align}
J^{\gamma}_{\mbox{S}^3\mbox{P}}(x;\pi^*_{c_{\gamma,\epsilon_{x,\pi'}}^*(x),\gamma,\epsilon_{x,\pi'}})<c_{\gamma,\epsilon_{x,\pi'}}^*(x)=c'.\label{nature_pi_ast}
\end{align}
(\ref{nature_pi_ast}) contradicts the definition of $c'$. 
Therefore, $L_d^{\gamma}(x;\pi^*_{c_{\gamma,\epsilon_{x,\pi'}}^*(x),\gamma,\epsilon_{x,\pi'}})=\epsilon_{x,\pi'}$. 
Hence, $J^{\gamma}_{\mbox{S}^3\mbox{P}}(x,\pi^*_{c_{\gamma,\epsilon_{x,\pi'}}^*(x),\gamma,\epsilon_{x,\pi'}})=\min_{\pi\in \hat{\Pi}^{\gamma}_{x,\epsilon}}J^{\gamma}_{\mbox{S}^3\mbox{P}}(x,\pi)$. 

We indicate (\ref{s3p_c*}) and (\ref{s3p_c*-2}). 
From the above, 
\begin{align}
\min_{\pi\in \hat{\Pi}^{\gamma}_{x,\epsilon}}J^{\gamma}_{\mbox{S}^3\mbox{P}}(x,\pi)=c_{\gamma,\epsilon_{x,\pi'}}^*(x)\geq \min_{\epsilon'\in (0,\epsilon]}c_{\gamma,\epsilon'}^*(x)=c_{\gamma,\epsilon^*}^*(x).\label{s3p_c*_leq}
\end{align}
We indicate that $\min_{\pi\in \hat{\Pi}^{\gamma}_{x,\epsilon}}J^{\gamma}_{\mbox{S}^3\mbox{P}}(x,\pi)\leq c_{\gamma,\epsilon^*}^*(x)$.
From (\ref{two_zero_sum_game_j_s_gamma}) and (\ref{def_c*}), 
\begin{align}
&J^{\gamma}_{\mbox{S}^3\mbox{P}}(x,\pi^*_{c_{\gamma,\epsilon^*}^*(x),\gamma,\epsilon^*})=\frac{1}{1-L_d^{\gamma}(x;\pi^*_{c_{\gamma,\epsilon^*}^*(x),\gamma,\epsilon^*})}J^{(S)}_{obj}(x';\pi^*_{c_{\gamma,\epsilon^*}^*(x),\gamma,\epsilon^*})\nonumber\\
&~~\leq \frac{1}{1-\epsilon^*}J^{(S)}_{obj}(x';\pi^*_{c_{\gamma,\epsilon^*}^*(x),\gamma,\epsilon^*})\leq \frac{1}{1-\epsilon^*}\max_{\alpha\in \mathcal A^{(S)}}J^*_{c_{\gamma,\epsilon^*}^*(x),\gamma,\epsilon^*}(x',\alpha,\pi^*_{c_{\gamma,\epsilon^*}^*(x),\gamma,\epsilon^*})\nonumber\\
&~~\leq c_{\gamma,\epsilon^*}^*(x).\label{s3p_c*_geq}
\end{align}
Form (\ref{s3p_c*_leq}) and (\ref{s3p_c*_geq}), $\min_{\pi\in \hat{\Pi}^{\gamma}_{x,\epsilon}}J^{\gamma}_{\mbox{S}^3\mbox{P}}(x,\pi)=c_{\gamma,\epsilon^*}^*(x)$. 
Also, (\ref{s3p_c*}) and (\ref{s3p_c*-2}) are proved. 
%We indicate (\ref{s3p_c*-2}). 
%From (\ref{s3p_c*_geq}), it is sufficient to prove that $L_d^{\gamma}(x;\pi^*_{c_{\gamma,\epsilon^*}^*(x),\gamma,\epsilon^*})=\epsilon^*$.
%We assume $L_d^{\gamma}(x;\pi^*_{c_{\gamma,\epsilon^*}^*(x),\gamma,\epsilon^*})<\epsilon^*$. 
%Then, from (\ref{s3p_c*_leq}) and (\ref{s3p_c*_geq}), $J^{\gamma}_{\mbox{S}^3\mbox{P}}(x,\pi^*_{c_{\gamma,\epsilon^*}^*(x),\gamma,\epsilon^*})<\min_{\pi\in \hat{\Pi}^{\gamma}_{x,\epsilon}}J^{\gamma}_{\mbox{S}^3\mbox{P}}(x,\pi)$. 
%This contradicts $\pi^*_{c_{\gamma,\epsilon^*}^*(x),\gamma,\epsilon^*}\in \hat{\Pi}^{\gamma}_{x,\epsilon}$.
%Therefore, $L_d^{\gamma}(x;\pi^*_{c_{\gamma,\epsilon^*}^*(x),\gamma,\epsilon^*})=\epsilon^*$, and (\ref{s3p_c*-2}) is proved.
%===========================================================================================================================%
\subsection{Proof of Theorem~\ref{t3-2}\label{secA1-2}}
%===========================================================================================================================%
From (\ref{two_zero_sum_game_j_s_gamma}) and (\ref{def_c*}), $\pi^*_{c_{\gamma,\epsilon}^*(x),\gamma,\epsilon}\in \hat{\Pi}^{\gamma}_{x,\epsilon}$. 
We defne $c'=\min_{\pi\in \hat{\Pi}^{\gamma}_{x,\epsilon}}J_{\mbox{MCMP}}(x;\pi)$.
For all $\pi'\in \hat{\Pi}^{\gamma}_{x,\epsilon}$, $J^{(M)}_{obj}((x,s),\pi')\leq \max_{\alpha\in \mathcal A^{(M)}}J_{c',\gamma,\epsilon}((x,s),\alpha,\pi')\leq c'$. 
Therefore, 
%for all policy $\pi''$ satisfying 
%\begin{align*}
%\pi''\in \arg \min_{\pi\in \hat{\Pi}^{\gamma}_{x,\epsilon}}\Bigl\{\max_{\alpha\in \mathcal %A^{(M)}}J_{c',\gamma,\epsilon}(x',\alpha,\pi)\Bigr\}, 
%\end{align*}
%then, 
\begin{align*}
c'=\min_{\pi\in \hat{\Pi}^{\gamma}_{x,\epsilon}}J^{(M)}_{obj}((x,s),\pi)\leq\min_{\pi\in \hat{\Pi}^{\gamma}_{x,\epsilon}}\max_{\alpha\in \mathcal A^{(M)}}J_{c',\gamma,\epsilon}((x,s),\alpha,\pi)\leq c'. 
\end{align*}
Also, $c'\leq \min_{\pi\in \Pi\setminus \hat{\Pi}^{\gamma}_{x,\epsilon}}\max_{\alpha\in \mathcal A^{(M)}}J_{c',\gamma,\epsilon}((x,s),\alpha,\pi)$. 
Hence, $c'=\min_{\pi\in \Pi}\max_{\alpha\in \mathcal A^{(M)}}J_{c',\gamma,\epsilon}((x,s),\alpha,\pi)$, and $c_{\gamma,\epsilon}^*(x)\leq c'$. 
We assume that $c_{\gamma,\epsilon}^*(x)< c'$. 
From (\ref{two_zero_sum_game_j_s_gamma}) and (\ref{def_c*}), 
\begin{align}
&\max_{\alpha\in \mathcal A^{(M)}}J_{c_{\gamma,\epsilon}^*(x),\gamma,\epsilon}((x,s),\alpha,\pi^*_{c_{\gamma,\epsilon}^*(x),\gamma,\epsilon})\nonumber\\
&~=\max\Bigl\{J^{(M)}_{obj}((x,s),\pi^*_{c_{\gamma,\epsilon}^*(x),\gamma,\epsilon}),\frac{c_{\gamma,\epsilon}^*(x)}{\epsilon}L_d^{\gamma}(x;\pi^*_{c_{\gamma,\epsilon}^*(x),\gamma,\epsilon})\Bigr\}\nonumber\\
&~=\max\Bigl\{J_{\mbox{MCMP}}(x;\pi^*_{c_{\gamma,\epsilon}^*(x),\gamma,\epsilon}),\frac{c_{\gamma,\epsilon}^*(x)}{\epsilon}L_d^{\gamma}(x;\pi^*_{c_{\gamma,\epsilon}^*(x),\gamma,\epsilon})\Bigr\}\leq c_{\gamma,\epsilon}^*(x).\label{M_c_prime_c*}
\end{align}
Therefore, $J_{\mbox{MCMP}}(x;\pi^*_{c_{\gamma,\epsilon}^*(x),\gamma,\epsilon})<c'$. 
This contradicts the definition of $c'$. 
Therefore, $c_{\gamma,\epsilon}^*(x)=c'$, and (\ref{t3-2_c}) is proved. 
%From (\ref{t3-2_c}) and (\ref{M_c_prime_c*}), (\ref{t3-2_pi}) is indicated. 
%%%%%%%%%%%%%%%%%%%%%%%%%%%%%%%%%%%%%%%%%%%%%%%%%%%%%%%%%%%%%%%%%%%%%%%%%%%%%%%%%%%%%%%%%%%%%%%%%%%%%%%%%%%%%%%%%%%%%%%%%%%%%
\section{Proofs of Theorems \ref{t3-3} and \ref{t3-4}}\label{secA2}
%%%%%%%%%%%%%%%%%%%%%%%%%%%%%%%%%%%%%%%%%%%%%%%%%%%%%%%%%%%%%%%%%%%%%%%%%%%%%%%%%%%%%%%%%%%%%%%%%%%%%%%%%%%%%%%%%%%%%%%%%%%%%
Let $\mathcal Z_{\delta}$ be a set of probability distributions on the policy set $\Pi_{\delta}^*$, and let $\mathcal A$ be a set of probability distributions on the set $\{\mathcal M_o,\mathcal M_{d,\gamma}\}$.
We suppose that $c\geq c_{\gamma,\epsilon}^*(x)$. 
Given an initial state $x\in \mathcal X$ and a positive small number $\delta$, we define $\hat{J}_{x,\delta}:\mathcal Z_{\delta}\times \mathcal A\rightarrow \mathbb{R}$ as 
\begin{align*}
\hat{J}_{x,\delta}(z_{\delta},\alpha)=\alpha(\mathcal M_o)\sum_{\pi\in \Pi_{\delta}^*}z_{\delta}(\pi)J_{obj}(x';\pi)+\alpha(\mathcal M_{d,\gamma})\sum_{\pi\in \Pi_{\delta}^*}z_{\delta}(\pi)\frac{\eta(\epsilon;c)}{\epsilon}J_{cond}(x';\pi). 
\end{align*}
Here, $x'=(x,0)$ in the case of (S), and $x'=(x,s)$ in the case of (M). 
If $z_{\delta}$ selects $\pi\in \Pi_{\delta}^*$ with probability 1, then we denote $z_{\delta}$ as $\pi$ simply. 
It is obvious that 
\begin{align}
\max_{\alpha\in \mathcal A}\min_{\pi\in \Pi}J_{c,\gamma,\epsilon}(x',\alpha,\pi)\leq
\min_{\pi\in \Pi}\max_{\alpha\in \mathcal A}J_{c,\gamma,\epsilon}(x',\alpha,\pi)\leq \min_{z_{\delta}\in \mathcal Z_{\delta}}\max_{\alpha\in \mathcal A}\hat{J}_{x,\delta}(z_{\delta},\alpha).\label{t3-4-ichiban}
\end{align}
From the min-max theorem for matrix games (\cite{algebraStrang}),
\begin{align}
\min_{z_{\delta}\in \mathcal Z_{\delta}}\max_{\alpha\in \mathcal A}\hat{J}_{x,\delta}(z_{\delta},\alpha)=\max_{\alpha\in \mathcal A}\min_{z_{\delta}\in \mathcal Z_{\delta}}\hat{J}_{x,\delta}(z_{\delta},\alpha).\label{t3-4-niban}
\end{align}
Therefore, from (\ref{t3-4-ichiban}) and (\ref{t3-4-niban}), it is sufficient to indicate  
\begin{align}
\lim_{\delta\rightarrow 0}\max_{\alpha\in \mathcal A}\min_{z_{\delta}\in \mathcal Z_{\delta}}\hat{J}_{x,\delta}(z_{\delta},\alpha)=\max_{\alpha\in \mathcal A}\min_{\pi\in \Pi}J_{c,\gamma,\epsilon}(x',\alpha,\pi)\label{sub_meidai_3-3_3-4}
\end{align}
to prove Theorems \ref{t3-3} and \ref{t3-4}. 
The definition of $\Pi_{\delta}^*$ leads to 
$\min_{\pi\in \Pi}J_{c,\gamma,\epsilon}(x',\alpha_{\delta}^{-},\pi)=\hat{J}_{x,\delta}(\pi_{\delta}^{\alpha_{\delta}^{-}},\alpha_{\delta}^{-})$ and $\min_{\pi\in \Pi}J_{c,\gamma,\epsilon}(x',\alpha_{\delta}^{+},\pi)=\hat{J}_{x,\delta}(\pi_{\delta}^{\alpha_{\delta}^{+}},\alpha_{\delta}^{+})$.  
$\hat{J}_{x,\delta}(\pi_{\delta}^{\alpha_{\delta}^{-}},\alpha)$ and $\hat{J}_{x,\delta}(\pi_{\delta}^{\alpha_{\delta}^{+}},\alpha)$ are linear functions with respect to $\alpha$.
Let $M$ be a value that is greater than the absolute values of slopes of $\hat{J}_{x,\delta}(\pi_{\delta}^{\alpha_{\delta}^{-}},\bullet)$ and $\hat{J}_{x,\delta}(\pi_{\delta}^{\alpha_{\delta}^{+}},\bullet)$. 
$\min_{\pi\in \Pi}J_{c,\gamma,\epsilon}(x',\alpha,\pi)$ is a concave function with respect to $\alpha$. 
Therefore, 
\begin{align*}
0&\leq \max_{\alpha\in \mathcal A}\min_{z_{\delta}\in \mathcal Z_{\delta}}\hat{J}_{x,\delta}(z_{\delta},\alpha)-\max_{\alpha\in \mathcal A}\min_{\pi\in \Pi}J_{c,\gamma,\epsilon}(x',\alpha,\pi)\\
&\leq \hat{J}_{x,\delta}(\pi_{\delta}^{\alpha_{\delta}^{-}},\alpha_{\delta}^{-}) + 2\delta M - \max_{\alpha\in \mathcal A}\min_{\pi\in \Pi}J_{c,\gamma,\epsilon}(x',\alpha,\pi)\leq 2\delta M.
\end{align*}
By converging $\delta$ to 0, (\ref{sub_meidai_3-3_3-4}) is obtained.
%%%%%%%%%%%%%%%%%%%%%%%%%%%%%%%%%%%%%%%%%%%%%%%%%%%%%%%%%%%%%%%%%%%%%%%%%%%%%%%%%%%%%%%%%%%%%%%%%%%%%%%%%%%%%%%%%%%%%%%%%%%%%
\section{Proof of Theorem \ref{t3-5}}\label{secA3}
%%%%%%%%%%%%%%%%%%%%%%%%%%%%%%%%%%%%%%%%%%%%%%%%%%%%%%%%%%%%%%%%%%%%%%%%%%%%%%%%%%%%%%%%%%%%%%%%%%%%%%%%%%%%%%%%%%%%%%%%%%%%%
Theorem~\ref{t3-5} is proven by Lemmas~\ref{l3-8}, \ref{l3-9} and \ref{l3-10}.
%---------------------------------------------------------------------------------------------------------------------------%
\begin{lem}
\label{l3-8}
%---------------------------------------------------------------------------------------------------------------------------%
Given a real number $c$,  for $x'=(x,0)$, 
\begin{align}
\lim_{M\rightarrow \infty}\Bigl|\min_{\pi\in \Pi}\max_{\alpha\in \mathcal A}J_{c,\gamma,\epsilon}(x',\alpha,\pi)-\min_{\pi\in \Pi_{M,\pi'_{or}}}\max_{\alpha\in \mathcal A}J_{c,\gamma,\epsilon}(x',\alpha,\pi)\Bigr|=0.\label{result_l3-8}
\end{align}
\end{lem}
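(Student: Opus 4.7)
The plan is to reduce to the easy inclusion and then produce a matching upper bound by a cost-capping approximation. Since $\Pi_{M,\pi'_{or}}\subseteq\Pi$ trivially, one has $\min_{\pi\in\Pi_{M,\pi'_{or}}}\max_\alpha J_{c,\gamma,\epsilon}\geq \min_{\pi\in\Pi}\max_\alpha J_{c,\gamma,\epsilon}$ for every $M$, so the real work is to build, for arbitrary $\delta>0$, a policy $\pi^*_M\in\Pi_{M,\pi'_{or}}$ whose value is within $\delta+o_M(1)$ of the unrestricted minimum. I would first pick a $\delta$-optimal $\pi^*\in\Pi$ for the unrestricted minimum; specializing $\alpha$ to put all mass on $\mathcal M_o$ and then on $\mathcal M_{d,\gamma}$ shows that $J^{(S)}_{obj}(x';\pi^*)$ and $J^{(S)}_{cond}(x';\pi^*)$ are both finite. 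Then define the cost-capped policy $\pi^*_M$ to copy $\pi^*$ while the accumulated cost $r_t<M$ and switch to $\pi'_{or}$ once $r_t\geq M$. Writing $\tau_M:=\inf\{t:r_t\geq M\}$ with $\inf\emptyset=\infty$, $\pi^*$ and $\pi^*_M$ induce the same law on $\{t<\tau_M\}$, so their values differ only through what happens after time $\tau_M$.

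The next step is to control the two pieces of $J_{c,\gamma,\epsilon}$ separately and show each discrepancy tends to $0$. For the constraint term, $J^{(S)}_{cond}$ is bounded by $1$ uniformly in the policy (since $\sum_{t\geq 0}(1-\gamma)\gamma^t=1$), so $|J^{(S)}_{cond}(x';\pi^*)-J^{(S)}_{cond}(x';\pi^*_M)|\leq 2E^{\pi^*}[\gamma^{\tau_M}\mathbf{1}[\tau_M<\infty]]$, which vanishes as $M\to\infty$ by dominated convergence. For the objective term, $G^{(S)}_o$ is nonzero only at a transition into the terminal state, so the discrepancy is concentrated on terminating paths with $\tau_M<\infty$. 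Under $\pi^*$ this contributes $E^{\pi^*}[r_\tau\mathbf{1}[\text{term},r_\tau\geq M]]$, which vanishes by dominated convergence applied to $r_\tau\mathbf{1}[\text{term}]$ (integrable because $J^{(S)}_{obj}(x';\pi^*)<\infty$). Under $\pi^*_M$, after the switch $\pi'_{or}$ either remains in $\mathcal X_{d,or}$ (contributing $0$ to the objective) or, from $x\notin\mathcal X_{d,or}$, terminates in finite expected time with a bounded expected added cost $K:=\max_{x\notin\mathcal X_{d,or}}E^{\pi'_{or}}[\sum_{t\geq 0} g(x_t,u_t,w_t)\mid x_0=x]$; this yields an overall bound $E^{\pi^*}[(r_{\tau_M}+K)\mathbf{1}[\tau_M<\infty]]$, which again vanishes as $M\to\infty$ via the same integrability argument on $r_\tau\mathbf{1}[\text{term}]$.

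The hard part will be justifying $K<\infty$. I would argue that from any $x\notin\mathcal X_{d,or}$ the deterministic stationary MINPROB optimum $\pi'_{or}$ reaches the terminal state with probability $1$: otherwise the set of states visited on non-terminating trajectories of $\pi'_{or}$ starting at $x$ is a closed subset of $\mathcal X\setminus\{0\}$ under $\pi'_{or}$, so $x$ itself admits a policy with zero termination probability and hence lies in $\mathcal X_{d,or}$, a contradiction. Finiteness of $\mathcal X$ together with $\Pr^{\pi'_{or}}[\text{term}\mid x]=1$ then yields a uniform-in-$x$ geometric tail bound on the hitting time, and combined with the finite per-stage cost expectation from the model's standing hypothesis, gives finite expected total cost from each non-attention state; maximizing over the finite set $\mathcal X\setminus\mathcal X_{d,or}$ gives $K<\infty$. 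Assembling the two bounds, $\max_\alpha J_{c,\gamma,\epsilon}(x',\alpha,\pi^*_M)\to \max_\alpha J_{c,\gamma,\epsilon}(x',\alpha,\pi^*)\leq \min_{\pi\in\Pi}\max_\alpha J_{c,\gamma,\epsilon}(x',\alpha,\pi)+\delta$, so letting $\delta\to 0$ closes the gap and completes the proof.
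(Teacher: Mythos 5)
Your overall strategy is the same as the paper's: compare a (near-)optimal $\pi^*$ with its cost-capped modification that switches to $\pi'_{or}$ once $r_t\geq M$, use the trivial inequality from $\Pi_{M,\pi'_{or}}\subseteq\Pi$, and show the discrepancy vanishes. The constraint-term estimate via $\gamma^{\tau_M}$ is fine. The objective-term estimate, however, has two genuine gaps. First, it is not true that the MINPROB-optimal $\pi'_{or}$ reaches the terminal state with probability $1$ from every $x\notin\mathcal X_{d,or}$: non-attention states are exactly those from which \emph{every} policy has \emph{positive} termination probability, not probability one, and your ``closed subset'' argument fails because a state visited on some non-terminating trajectory may still terminate with positive probability from that state. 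Consequently $K=\max_{x\notin\mathcal X_{d,or}}E^{\pi'_{or}}[\sum_t g]$ is generally infinite (in the paper's own experiment the cost $g=1$ keeps accruing forever after a collision). The quantity that is both finite and actually relevant is $E^{\pi'_{or}}[r_\tau\mathbf 1[\mathrm{term}]]$ --- the paper's $L_M$ --- because $G^{(S)}_o$ charges nothing on paths that never terminate.

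Second, and more seriously, $E^{\pi^*}[(r_{\tau_M}+K)\mathbf 1[\tau_M<\infty]]$ does \emph{not} vanish ``by the same integrability argument'': the event $\{\tau_M<\infty\}$ contains paths that never terminate under $\pi^*$ yet accumulate cost beyond $M$, on which $r_\tau\mathbf 1[\mathrm{term}]=0$ gives no control. Since the whole point of the construction is to allow improper $\pi^*$ (failure probability up to $\epsilon$), and Assumption~\ref{a2-3} then forces $E^{\mathcal M,\pi^*}\{\sum_t g\}=\infty$, one typically has $\Pr(\tau_M<\infty)\not\to 0$, so your bound is at least $M\Pr(\tau_M<\infty)$, which diverges. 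The repair is to note that the post-switch objective charge is nonzero only on $\{x_{\tau_M}\notin\mathcal X_{d,or}\}$ and only upon subsequent termination of $\pi'_{or}$; since from any non-attention state every policy terminates with probability at least some $q_{\min}>0$, the continuation of $\pi^*$ itself terminates from there with probability at least $q_{\min}$ while carrying cost at least $M$, whence $M\,q_{\min}\Pr(\tau_M<\infty,\,x_{\tau_M}\notin\mathcal X_{d,or})\leq E^{\pi^*}[r_\tau\mathbf 1[\mathrm{term},\,r_\tau\geq M]]\to 0$. That transfer step, which converts the tail of the terminating cost under $\pi^*$ into control of the switch event, is what your write-up is missing.
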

\begin{proof}
From Assumption~\ref{a2-3}, 
\begin{align}
&\forall \delta_{1}, \delta_{2}>0,~\exists M_{c,\delta_{1},\delta_{2}},~\forall M\geq M_{c,\delta_{1},\delta_{2}},\nonumber\\
&~~~\mbox{Pr}\Bigl(\sum_{t=0}^{\infty}G_o^{(S)}(x'_t,u'_t,w'_t)\geq M\Bigl|x'_0=(x,0),\pi^*_{c,\gamma,\epsilon}\Bigr)<\delta_1\\
&~~~~~\wedge E^{\mathcal M_o^{(S)},\pi^*_{c,\gamma,\epsilon}}\Bigl\{\varphi\Bigl(\sum_{t=0}^{\infty}G_o^{(S)}(x'_t,u'_t,w'_t);M\Bigr)\Bigl|x'_0=(x,0)\Bigl\}<\delta_{2}.
\end{align}
Here, $\varphi(r;M)$ is a function that takes $r$ if $r\geq M$ otherwise 0.
Let $\hat{\pi}_{M,c,\gamma,\epsilon}$ be a policy that follows $\pi^*_{c,\gamma,\epsilon}$ when the total cost is less than $M$ and $\pi'_{or}$ defined in Section~\ref{sec3-5} when it is greater than or equal to $M$.
Also, we define 
\begin{align*}
L_M=\max_{x\in \mathcal X}E^{\mathcal M_o^{(S)},\pi'_{or}}\Bigl\{\sum_{t=0}^{\infty}G_o^{(S)}(x'_t,u'_t,w'_t)\Bigl|x'_0=(x,0)\Bigr\}. 
\end{align*}
From the definition of $G_o^{(S)}$, the probability of reaching the terminal state with a total cost $r$ greater than $M$ is less than $\delta_1$. 
Therefore, 
\begin{align*}
&J^{(S)}_{obj}(x';\hat{\pi}_{M,c,\gamma,\epsilon})-J^{(S)}_{obj}(x';\pi^*_{c,\gamma,\epsilon})\leq \delta_2+\delta_1 L_{M},\\
&J^{(S)}_{cond}(x';\hat{\pi}_{M,c,\gamma,\epsilon})-J^{(S)}_{cond}(x';\pi^*_{c,\gamma,\epsilon})\leq \delta_1,
\end{align*}
and, for all $\alpha\in \mathcal A$, 
\begin{align*}
J_{c,\gamma,\epsilon}(x',\alpha,\hat{\pi}_{M,c,\gamma,\epsilon})\leq J_{c,\gamma,\epsilon}(x',\alpha,\pi^*_{c,\gamma,\epsilon}) + \delta_1\Bigl(L_{M}+\frac{(1-\epsilon)c}{\epsilon}\Bigr)+\delta_2.
\end{align*}
By maximizing both sides, 
\begin{align*}
\max_{\alpha\in \mathcal A}J_{c,\gamma,\epsilon}(x',\alpha,\hat{\pi}_{M,c,\gamma,\epsilon})\leq \max_{\alpha\in \mathcal A}J_{c,\gamma,\epsilon}(x',\alpha,\pi^*_{c,\gamma,\epsilon})+\delta_1\Bigl(L_{M}+\frac{(1-\epsilon)c}{\epsilon}\Bigr)+\delta_2. 
\end{align*}
Furthermore, 
\begin{align*}
&\min_{\pi\in \Pi}\max_{\alpha\in \mathcal A}J_{c,\gamma,\epsilon}(x',\alpha,\pi)=\max_{\alpha\in \mathcal A}J_{c,\gamma,\epsilon}(x',\alpha,\pi^*_{c,\gamma,\epsilon})\nonumber\\
&~~~~~~\leq \min_{\pi\in \Pi_{M,\pi'_{or}}}\max_{\alpha\in \mathcal A}J_{c,\gamma,\epsilon}(x',\alpha,\pi)\leq \max_{\alpha\in \mathcal A}J_{c,\gamma,\epsilon}(x',\alpha,\hat{\pi}_{M,c,\gamma,\epsilon}).
\end{align*}
Since $\delta_{1}$ and $\delta_{2}$ can be made arbitrarily small, we obtain (\ref{result_l3-8}).
\end{proof}
%---------------------------------------------------------------------------------------------------------------------------%
\begin{lem}
\label{l3-9}
%---------------------------------------------------------------------------------------------------------------------------%
Given real numbers $c$ and $M$, for $x'=(x,0)$, 
\begin{align}
\lim_{\gamma\uparrow 1}\Bigl|\min_{P_M^{(S)}\in \mathcal P_{M}^{(S)}}\max_{\hat{\alpha}\in \hat{\mathcal A}}\sum_{\pi\in \Pi_{M,\pi'_{or}}^{(st)}}\!\!\!P_M^{(S)}(\pi)\hat{J}_{c,\gamma,\epsilon}(x',\hat{\alpha},\pi)-\min_{\pi\in \Pi_{M,\pi'_{or}}}\max_{\alpha\in \mathcal A}J_{c,\gamma,\epsilon}(x',\alpha,\pi)\Bigr|=0,\label{l3-9_eq1}
\end{align}
and 
\begin{align}
\lim_{\gamma\uparrow 1}\Bigl|J_{c,\gamma,\epsilon}(x',\alpha,\hat{\pi}^*_{M,c,\gamma,\epsilon})-\min_{\pi\in \Pi_{M,\pi'_{or}}}\max_{\alpha\in \mathcal A}J_{c,\gamma,\epsilon}(x',\alpha,\pi)\Bigr|=0.\nonumber 
\end{align} 
Here, $\hat{\pi}^*_{M,c,\gamma,\epsilon}\in \Pi$ is equivalent to the mixed policy that attains the minimum value of the first term in the limit of (\ref{l3-9_eq1}).
The equivalence between a mixed policy and a stochastic policy belonging to $\Pi$ is mentioned in Section \ref{sec2-2}. 
\end{lem}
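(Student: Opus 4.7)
The plan is to prove (\ref{l3-9_eq1}) in three ingredients: (i) uniform convergence of $\hat J_{c,\gamma,\epsilon}$ to $J_{c,\gamma,\epsilon}$ over the restricted policy class as $\gamma\uparrow 1$; (ii) an application of the matrix-game min-max theorem (as in Appendix~\ref{secA2}) to remove the mixture on the left side; and (iii) the existence, under Assumption~\ref{a2-1}, of a deterministic stationary minimizer for the truncated problem with the posterior fixed.

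First I would compare $\mathcal M_{o,\gamma}^{(S)}$ with $\mathcal M_o^{(S)}$. The two kernels differ only when the running total cost $r<M$, where $p_{o,\gamma}^{(S)}$ diverts mass $1-\gamma$ per step to the artificial terminal state $*$ on which $G_o^{(S)}$ vanishes. Every policy in $\Pi_{M,\pi'_{or}}$ switches to $\pi'_{or}\in\Pi_{d,or}$ once $r\geq M$, and under $\pi'_{or}$ the expected tail cost starting at $(x,r)$ is bounded by $r+L$ with $L$ independent of $\gamma$. Under Assumption~\ref{a2-1} the set of reachable $(x,r)$ with $r<M$ is finite, so $\Pi_{M,\pi'_{or}}^{(st)}$ itself is a finite set, and Assumption~\ref{a2-3} excludes improper policies accumulating finite cost below $M$. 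A dominated convergence argument then gives pointwise convergence $J^{(S)}_{obj,\gamma}((x,0);\pi)\to J^{(S)}_{obj}((x,0);\pi)$ on $\Pi_{M,\pi'_{or}}^{(st)}$, which is uniform by finiteness. Because $J^{(S)}_{cond}$ is evaluated on the same $\mathcal M_{d,\gamma}^{(S)}$ on both sides, this yields $\hat J_{c,\gamma,\epsilon}(x',\hat\alpha,\pi)\to J_{c,\gamma,\epsilon}(x',\alpha,\pi)$ uniformly in $(\hat\alpha,\pi)\in\hat{\mathcal A}\times\Pi_{M,\pi'_{or}}^{(st)}$ after the natural identification $\hat\alpha(\mathcal M_{o,\gamma}^{(S)})=\alpha(\mathcal M_o^{(S)})$.

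Next I would invoke the min-max theorem. Because $\Pi_{M,\pi'_{or}}^{(st)}$ is finite and $\hat{\mathcal A}$ is a one-dimensional simplex on which $\hat J_{c,\gamma,\epsilon}(x',\cdot,\pi)$ is linear, the matrix-game argument of Appendix~\ref{secA2} lets me interchange $\min_{P_M^{(S)}}$ and $\max_{\hat\alpha}$ on the left side of (\ref{l3-9_eq1}). For each fixed $\hat\alpha$ the inner minimization reduces to a finite discounted MDP on the truncated state space, so a deterministic stationary policy in $\Pi_{M,\pi'_{or}}^{(st)}$ is already a minimizer over the larger class $\Pi_{M,\pi'_{or}}$; hence after the swap I may enlarge the inner minimization to $\Pi_{M,\pi'_{or}}$ without changing the value. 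Combining this with Theorem~\ref{t3-3} applied to $J_{c,\gamma,\epsilon}$, which performs the analogous min-max swap on the right side of (\ref{l3-9_eq1}), and passing to the limit using the uniform convergence from the first step, proves (\ref{l3-9_eq1}). The second equation of the lemma then follows from the equivalence, described in Section~\ref{sec2-2}, between the mixed optimum $P_{M,\gamma,\epsilon}^{(S)}$ and the stochastic policy $\hat\pi^*_{M,c,\gamma,\epsilon}\in\Pi$.

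The main obstacle is the uniformity in the first step. The mixture weights $P_M^{(S)}$ may concentrate on different pure policies as $\gamma$ varies, so exchanging $\lim_{\gamma\uparrow 1}$ with $\min_{P_M^{(S)}}\max_{\hat\alpha}$ requires uniform rather than merely pointwise convergence. Finiteness of $\Pi_{M,\pi'_{or}}^{(st)}$ under Assumption~\ref{a2-1} is what promotes pointwise to uniform, and Assumption~\ref{a2-3} together with $\pi'_{or}\in\Pi_{d,or}$ controls the tail beyond $r=M$ uniformly in the policy; making these dependencies explicit is the chief bookkeeping in the proof.
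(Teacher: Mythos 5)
Your proposal is correct and follows essentially the same route as the paper's proof: Assumption~\ref{a2-1} to make the truncated state space (and hence $\Pi_{M,\pi'_{or}}^{(st)}$) finite, Assumption~\ref{a2-3} to ensure every policy almost surely reaches the terminal state or exceeds total cost $M$ so that stationary policies suffice, uniform convergence of $J^{(S)}_{obj,\gamma}$ to $J^{(S)}_{obj}$ over this finite class (with $J^{(S)}_{cond}$ unchanged), and the matrix-game min-max swap followed by a sandwich argument in $\delta$. The only cosmetic difference is that you route the right-hand side through Theorem~\ref{t3-3}, whereas the paper writes the two-sided inequality chain directly.
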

\begin{proof}
From Assumption~\ref{a2-1}, there are a finite number of possible total cost values less than $M$. 
Therefore, this problem can be regarded as SSP problem for a finite MDP, and we use the results from Chapter 2 of \cite{Bertsekas1996}. 
From Assumption~\ref{a2-3}, for all $\pi\in \Pi$ and $M>0$, $\mbox{Pr}(\exists t=0,1,\cdots,x_t=0\vee r_t\geq M|x_0=x,\pi)=1$. 
Therefore, 
\begin{align*}
\max_{\hat{\alpha}\in \hat{\mathcal A}}\min_{\pi\in \Pi_{M,\pi'_{or}}}\hat{J}_{c,\gamma,\epsilon}(x',\hat{\alpha},\pi)
=\max_{\hat{\alpha}\in \hat{\mathcal A}}\min_{\pi\in \Pi_{M,\pi'_{or}}^{(st)}}\hat{J}_{c,\gamma,\epsilon}(x',\hat{\alpha},\pi). 
\end{align*}
%In the following, we will limit our discussion to the class $\mathcal P_{M}^{(S)}$ of mixed policies, which is defined as a probability distribution on $\Pi_{M,\pi'_{or}}^{(st)}$.
In the following, we will limit our discussion to the class $\mathcal P_{M}^{(S)}$ of mixed policies.
$\Pi_{M,\pi'_{or}}^{(st)}$ can be regarded as a finite set.
Given $\delta>0$, there is a certain $\gamma\in (0,1)$ such that for all $\gamma'\in [\gamma,1)$, $\sup_{\pi\in \Pi_{M,\pi'_{or}}^{(st)}}|J^{(S)}_{obj,\gamma'}(x',\pi)-J^{(S)}_{obj}(x',\pi)|<\delta$. 
Then, 
\begin{align}
&\min_{P_M^{(S)}\in \mathcal P_{M}^{(S)}}\max_{\hat{\alpha}\in \hat{\mathcal A}}\sum_{\pi\in \Pi_{M,\pi'_{or}}^{(st)}}P_M^{(S)}(\pi)\hat{J}_{c,\gamma,\epsilon}(x',\hat{\alpha},\pi)\nonumber\\
&~~~~~~~~~~~~=\max\Bigl\{J^{(S)}_{obj,\gamma}(x',\hat{\pi}^*_{M,c,\gamma,\epsilon}),\frac{(1-\epsilon)c}{\epsilon}J^{(S)}_{cond}(x',\hat{\pi}^*_{M,c,\gamma,\epsilon}) \Bigr\}\nonumber\\
&~~~~~~~~~~~~\leq \min_{\pi\in \Pi_{M,\pi'_{or}}}\max_{\alpha\in \mathcal A}J_{c,\gamma,\epsilon}(x',\alpha,\pi)\nonumber\\
&~~~~~~~~~~~~\leq \max\Bigl\{J^{(S)}_{obj}(x',\hat{\pi}^*_{M,c,\gamma,\epsilon}),\frac{(1-\epsilon)c}{\epsilon}J^{(S)}_{cond}(x',\hat{\pi}^*_{M,c,\gamma,\epsilon}) \Bigr\}\nonumber\\
&~~~~~~~~~~~~\leq \max\Bigl\{J^{(S)}_{obj,\gamma}(x',\hat{\pi}^*_{M,c,\gamma,\epsilon})+\delta,\frac{(1-\epsilon)c}{\epsilon}J^{(S)}_{cond}(x',\hat{\pi}^*_{M,c,\gamma,\epsilon}) \Bigr\}\nonumber\\
&~~~~~~~~~~~~\leq \min_{P_M^{(S)}\in \mathcal P_{M}^{(S)}}\max_{\hat{\alpha}\in \hat{\mathcal A}}\sum_{\pi\in \Pi_{M,\pi'_{or}}^{(st)}}P_M^{(S)}(\pi)\hat{J}_{c,\gamma,\epsilon}(x',\hat{\alpha},\pi)+\delta.
\end{align}
Since $\delta$ can be made arbitrarily small, the theorem holds. 
\end{proof}
%---------------------------------------------------------------------------------------------------------------------------%
\begin{lem}
\label{l3-10}
%---------------------------------------------------------------------------------------------------------------------------%
For $x\in \mathcal X$, $\hat{c}_{M,\gamma,\epsilon}(x)\leq c^*_{\gamma,\epsilon}(x)$. 
\end{lem}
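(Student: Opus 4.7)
The plan is to show that the value $c = c^*_{\gamma,\epsilon}(x)$ satisfies the feasibility condition appearing in (\ref{c_M_gamma_def}), whence $\hat{c}_{M,\gamma,\epsilon}(x) \leq c^*_{\gamma,\epsilon}(x)$ follows from the infimum definition of $\hat{c}_{M,\gamma,\epsilon}(x)$.

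First, I would apply the matrix-game min--max theorem on the $\hat{c}$-side. Since $\hat{\mathcal A}$ is the $1$-simplex and $\Pi_{M,\pi'_{or}}^{(st)}$ is finite under Assumption~\ref{a2-1}, the argument used in the proof of Theorem~\ref{t3-4} gives
\begin{equation*}
\min_{P\in\mathcal P_M^{(S)}}\max_{\hat\alpha\in\hat{\mathcal A}}\sum_{\pi}P(\pi)\hat{J}_{c,\gamma,\epsilon}(x',\hat\alpha,\pi) = \max_{\hat\alpha\in\hat{\mathcal A}}\hat{J}^*_{c,\gamma,\epsilon}(x',\hat\alpha),
\end{equation*}
using the VI characterization from Section~\ref{sec3-5}. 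On the $c^*$-side, Theorem~\ref{t3-3} together with (\ref{def_c*}) yields $\max_\alpha J^*_{c,\gamma,\epsilon}(x',\alpha) = \min_\pi \max_\alpha J_{c,\gamma,\epsilon}(x',\alpha,\pi) \leq (1-\epsilon)c$. Consequently, the feasibility reduces to the pointwise comparison $\hat{J}^*_{c,\gamma,\epsilon}(x',\hat\alpha) \leq J^*_{c,\gamma,\epsilon}(x',\alpha)$ for every matched pair $\hat\alpha \leftrightarrow \alpha$ with $\hat\alpha(\mathcal M_{o,\gamma}^{(S)}) = \alpha(\mathcal M_o)$.

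For this comparison I would take $\pi^\alpha \in \arg\min_{\pi \in \Pi} J_{c,\gamma,\epsilon}(x',\alpha,\pi)$ and form the truncation $\tilde\pi^\alpha \in \Pi_{M,\pi'_{or}}$ that follows $\pi^\alpha$ while the running cost satisfies $r_t < M$ and switches to $\pi'_{or}$ once $r_t \geq M$. Matching of the beliefs makes the coefficients of $J^{(S)}_{obj,\gamma}$ and $L_d^\gamma$ in $\hat{J}_{c,\gamma,\epsilon}(x',\hat\alpha,\tilde\pi^\alpha)$ equal to those of $J^{(S)}_{obj}$ and $L_d^\gamma$ in $J_{c,\gamma,\epsilon}(x',\alpha,\pi^\alpha)$, so establishing $\hat{J}_{c,\gamma,\epsilon}(x',\hat\alpha,\tilde\pi^\alpha) \leq J_{c,\gamma,\epsilon}(x',\alpha,\pi^\alpha)$ reduces to a joint comparison of the two objective terms and the two constraint terms. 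The objective pair is favorable: the $1-\gamma$ early termination in $\mathcal M_{o,\gamma}^{(S)}$ below $M$ and the MINPROB property of $\pi'_{or}$ above $M$ both lower $J^{(S)}_{obj,\gamma}(x',\tilde\pi^\alpha)$ relative to $J^{(S)}_{obj}(x',\pi^\alpha)$.

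The main obstacle is the constraint pair, where $L_d^\gamma(x;\tilde\pi^\alpha)$ can exceed $L_d^\gamma(x;\pi^\alpha)$ because $\pi'_{or}$ deliberately keeps the trajectory away from the terminal state. I would handle this by coupling the two processes through shared disturbances up to the hitting time $T_M$ of $\{r_t \geq M\}$, so that the excess of $L_d^\gamma$ is supported on $\{T_M < \infty\}$ and is bounded by the $\gamma^{T_M}$-weighted probability of that event. Via Markov's inequality applied to the finite bound $J^{(S)}_{obj}(x',\pi^\alpha) \leq (1-\epsilon)c$ together with Assumption~\ref{a2-3}, this tail can be absorbed into the matching savings of $J^{(S)}_{obj,\gamma}$ over $J^{(S)}_{obj}$ on the same tail event, which come jointly from the $\gamma$-discount and the $\pi'_{or}$-switch. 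If a Dirac choice of $\tilde\pi^\alpha$ does not suffice, replacing it by a small mixture with a MAXPROB-optimal policy from $\Pi_{d,all}$ tightens the trade-off, while Assumption~\ref{a2-1} ensures the resulting mixture lies in $\mathcal P_M^{(S)}$.
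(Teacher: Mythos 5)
The paper disposes of Lemma~\ref{l3-10} in a single line, asserting it is immediate from the definitions (\ref{def_c*}) and (\ref{c_M_gamma_def}); your proposal instead attempts a substantive argument, and that argument has a genuine gap. The step you reduce everything to --- the pointwise comparison $\hat{J}^*_{c,\gamma,\epsilon}(x',\hat{\alpha})\leq J^*_{c,\gamma,\epsilon}(x',\alpha)$ for \emph{every} matched belief pair --- is stronger than what feasibility requires and fails in general at the corner $\hat{\alpha}(\mathcal M^{(S)}_{d,\gamma})=1$: there the objective term carries zero weight, so the ``savings'' of $J^{(S)}_{obj,\gamma}$ over $J^{(S)}_{obj}$ into which you plan to absorb the constraint excess simply vanish, while the inner minimization on the $\hat{c}$-side runs over the restricted class $\Pi_{M,\pi'_{or}}$, whose forced switch to the MINPROB policy $\pi'_{or}$ above total cost $M$ can only increase $L_d^{\gamma}$ (the functional $J^{(S)}_{cond}$ is not modified in $\hat{J}$). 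Hence $\min_{\pi\in \Pi_{M,\pi'_{or}}}L_d^{\gamma}(x;\pi)$ may strictly exceed $\min_{\pi\in \Pi}L_d^{\gamma}(x;\pi)$, and the pointwise inequality you rely on breaks exactly where your compensation mechanism has nothing to compensate with.

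Even if you retreat to comparing only the min--max (equivalently max--min) values, the central ``absorption'' step is not a proof. Feasibility of $c$ in (\ref{c_M_gamma_def}) means exhibiting a mixture $P$ on $\Pi_{M,\pi'_{or}}^{(st)}$ satisfying \emph{both} $\sum_{\pi}P(\pi)J^{(S)}_{obj,\gamma}(x';\pi)\leq (1-\epsilon)c$ and $\sum_{\pi}P(\pi)J^{(S)}_{cond}(x';\pi)\leq \epsilon$, because the maximum over $\hat{\alpha}$ of the mixed value is the larger of these two coordinates; a saving in the objective coordinate cannot offset an excess in the constraint coordinate. Your coupling/Markov-inequality sentence bounds neither coordinate, and the fallback ``mix with a MAXPROB-optimal policy'' is precisely where the real work lies: such a policy must itself be modified to lie in $\Pi_{M,\pi'_{or}}^{(st)}$ (reintroducing the same increase of $L_d^{\gamma}$ above $M$), and no quantitative estimate is offered showing the trade-off closes for fixed $M$ and $\gamma$, rather than only in the limits $M\rightarrow\infty$, $\gamma\uparrow 1$ that Lemmas~\ref{l3-8} and \ref{l3-9} address. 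So the proposal correctly senses that the statement is less innocent than the paper's one-line justification suggests, but as written it does not establish the claimed inequality.
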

\begin{proof}
This is obvious from the definitions of $\hat{c}_{M,\gamma,\epsilon}(x)$ and $c^*_{\gamma,\epsilon}(x)$.
\end{proof}
%%%%%%%%%%%%%%%%%%%%%%%%%%%%%%%%%%%%%%%%%%%%%%%%%%%%%%%%%%%%%%%%%%%%%%%%%%%%%%%%%%%%%%%%%%%%%%%%%%%%%%%%%%%%%%%%%%%%%%%%%%%%%
\section{Proof of Theorem \ref{t3-11}}\label{secA4}
%%%%%%%%%%%%%%%%%%%%%%%%%%%%%%%%%%%%%%%%%%%%%%%%%%%%%%%%%%%%%%%%%%%%%%%%%%%%%%%%%%%%%%%%%%%%%%%%%%%%%%%%%%%%%%%%%%%%%%%%%%%%%
Theorem~\ref{t3-11} is proven by Lemmas \ref{l3-12} and \ref{l3-13}.
%---------------------------------------------------------------------------------------------------------------------------%
\begin{lem}
\label{l3-12}
%---------------------------------------------------------------------------------------------------------------------------%
Given a positive real number $c$, for $x'=(x,s)$, 
\begin{align}
\lim_{\gamma\uparrow 1}\Bigl|\min_{P^{(M)}\in \mathcal P^{(M)}}\max_{\hat{\alpha}\in \hat{\mathcal A}}\sum_{\pi\in \hat{\Pi}^{(M)}}P^{(M)}(\pi)\hat{J}_{c,\gamma,\epsilon}(x',\hat{\alpha},\pi)-\min_{\pi\in \Pi}\max_{\alpha\in \mathcal A}J_{c,\gamma,\epsilon}(x',\alpha,\pi)\Bigr|=0,\label{l3-12_eq1}
\end{align}
and 
\begin{align}
\lim_{\gamma\uparrow 1}\Bigl|J_{c,\gamma,\epsilon}(x',\alpha,\hat{\pi}^*_{c,\gamma,\epsilon})-\min_{\pi\in \Pi}\max_{\alpha\in \mathcal A}J_{c,\gamma,\epsilon}(x',\alpha,\pi)\Bigr|=0.\nonumber
\end{align}
Here, $\hat{\pi}^*_{c,\gamma,\epsilon}$ is equivalent to the mixed policy that attains the minimum value of the first term in the limit of (\ref{l3-12_eq1}). 
The equivalence between a mixed policy and a stochastic policy belonging to $\Pi$ is mentioned in Section \ref{sec2-2}. 
\end{lem}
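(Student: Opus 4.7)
The plan is to mirror the proof of Lemma~\ref{l3-9}, exploiting the structural simplification that in case (M) the state space $\mathcal X^{(M)}_o$ is already finite, so no analogue of the total-cost truncation at $M$ is required. First, I would reduce the left-hand side of (\ref{l3-12_eq1}) to a minimization over the finite class $\hat{\Pi}^{(M)}$: since $\hat J_{c,\gamma,\epsilon}((x,s),\hat\alpha,\cdot)$ equals an expected total cost in a finite MDP obtained by mixing $\mathcal M^{(M)}_{o,\gamma}$ and $\mathcal M^{(M)}_{d,\gamma}$ with per-step continuation factor $\gamma\in(0,1)$ and bounded one-stage cost, classical discounted-cost theory (\cite{Bertsekas2017}) guarantees an optimal stationary deterministic policy lying in $\hat{\Pi}^{(M)}$; extending the minimum from $\hat{\Pi}^{(M)}$ to $\mathcal P^{(M)}$ does not change the value, because a pure strategy is always optimal against a fixed opponent.

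Second, $\mathcal P^{(M)}$ and $\hat{\mathcal A}$ are simplices over finite sets and the payoff is bilinear for each fixed $\pi$, so the classical min--max theorem for matrix games (\cite{algebraStrang}) gives
\begin{align*}
\min_{P^{(M)}\in\mathcal P^{(M)}}\max_{\hat\alpha\in\hat{\mathcal A}}\sum_{\pi\in\hat{\Pi}^{(M)}}P^{(M)}(\pi)\hat J_{c,\gamma,\epsilon}(x',\hat\alpha,\pi)
=\max_{\hat\alpha\in\hat{\mathcal A}}\min_{\pi\in\hat{\Pi}^{(M)}}\hat J_{c,\gamma,\epsilon}(x',\hat\alpha,\pi).
\end{align*}
By Theorem~\ref{t3-3}, an analogous exchange holds on the right-hand side of (\ref{l3-12_eq1}), so it suffices to show that for every $\hat\alpha\in\hat{\mathcal A}$ (identified with $\alpha\in\mathcal A$ via the natural bijection) the inner minima converge and that this convergence is uniform over the compact one-dimensional simplex $\hat{\mathcal A}$. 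The $J^{(M)}_{cond}$ component requires no work since $\hat J$ and $J$ use the identical MDP $\mathcal M^{(M)}_{d,\gamma}$; for the $J^{(M)}_{obj,\gamma}$ versus $J^{(M)}_{obj}$ comparison, each policy $\pi\in\hat{\Pi}^{(M)}_P$ drives the process into an absorbing set in finite expected time with bounded one-stage cost, so dominated convergence (or a standard Abelian argument) yields $J^{(M)}_{obj,\gamma}((x,s),\pi)\to J^{(M)}_{obj}((x,s),\pi)$; finiteness of $\hat{\Pi}^{(M)}_P$ upgrades this to uniform convergence, which transfers to the inner minimum.

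The main obstacle is handling improper policies $\pi\in\hat{\Pi}^{(M)}\setminus\hat{\Pi}^{(M)}_P$, for which $J^{(M)}_{obj}$ may be infinite and dominated convergence is not directly applicable. I would excise them in the spirit of Lemma~\ref{t3-m1}: for any $c\ge\hat c_{\gamma,\epsilon}(x)$ the optimizer on both sides must place vanishing mass on improper policies, because Assumption~\ref{a2-3} together with Theorem~\ref{t2-1} forces $J^{(M)}_{cond}((x,s);\pi)$ to stay bounded away from zero for improper $\pi$, so for $\gamma$ close enough to $1$ the contribution $(c/\epsilon)\hat\alpha(\mathcal M_{d,\gamma}^{(M)})J^{(M)}_{cond}$ already exceeds $c$ at the worst-case $\hat\alpha$ and rules such policies out of the minimum. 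Restricting both minima to $\hat{\Pi}^{(M)}_P$ then reduces the problem to the uniform-convergence argument of the previous paragraph, yielding (\ref{l3-12_eq1}); the corresponding statement for the near-optimal policy $\hat\pi^*_{c,\gamma,\epsilon}$ follows by selecting a minimizer $P^{(M)}_{\gamma,\epsilon}$ and invoking the equivalence between mixed policies and stochastic policies in $\Pi$ noted in Section~\ref{sec2-2}.
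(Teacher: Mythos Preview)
Your broad architecture (reduce the inner minimum to the finite class $\hat\Pi^{(M)}$, invoke the matrix min--max theorem, and use uniform convergence $J^{(M)}_{obj,\gamma}\to J^{(M)}_{obj}$ on proper policies) matches the paper's. Two steps, however, do not go through as written.

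\textbf{Reliance on Theorem~\ref{t3-3}.} You reduce the comparison to showing $\max_{\hat\alpha}\min_{\pi}\hat J\to\max_{\alpha}\min_{\pi}J$ and then invoke Theorem~\ref{t3-3} to identify the latter with $\min_{\pi}\max_{\alpha}J$. But Theorem~\ref{t3-3} is stated (and proved in Appendix~\ref{secA2}) only under the hypothesis $c\ge c^*_{\gamma,\epsilon}(x)$, whereas Lemma~\ref{l3-12} is asserted for every $c>0$. The paper never appeals to strong duality on the $J$ side; it sandwiches directly. One direction uses $J^{(M)}_{obj,\gamma}\le J^{(M)}_{obj}$ (hence $\hat J\le J$ pointwise), the matrix min--max theorem, Proposition~5.11 of \cite{Bertsekas1978}, and weak duality to get $\min_{P^{(M)}}\max_{\hat\alpha}\hat J\le\min_{\pi\in\Pi}\max_{\alpha}J$. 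The other direction just evaluates $J$ at the specific policy $\hat\pi^*_{c,\gamma,\epsilon}\in\Pi$, giving $\min_{\pi\in\Pi}\max_{\alpha}J\le\max\{J^{(M)}_{obj}(x';\hat\pi^*),\,(c/\epsilon)J^{(M)}_{cond}(x';\hat\pi^*)\}$, and then uses the uniform $\delta$-closeness to close the sandwich.

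\textbf{Excision of improper policies.} Your argument that $(c/\epsilon)J^{(M)}_{cond}$ ``already exceeds $c$ at the worst-case $\hat\alpha$'' requires $J^{(M)}_{cond}((x,s);\pi)>\epsilon$, not merely that it is bounded away from zero; an improper $\pi$ can perfectly well have failure probability in $(0,\epsilon]$. Moreover Assumption~\ref{a2-3} says nothing about $J^{(M)}_{cond}$: it asserts divergence of the \emph{expected total cost}, i.e.\ it governs $J^{(M)}_{obj}$. The paper's treatment is different and does not use the constraint side at all. Writing the discounted value as the solution of $(\mI-\gamma\mA)\vx=\vb$, for $\pi\in\hat\Pi^{(M)}\setminus\hat\Pi^{(M)}_P$ the matrix $\mI-\mA$ is singular, so either $(\mI-\gamma\mA)^{-1}\vb$ diverges as $\gamma\uparrow 1$ (so $\hat J_{c,\gamma,\epsilon}(x',\hat\alpha,\pi)\to\infty$ and $\pi$ cannot sit in the minimizer for $\gamma$ near $1$), or the $(x,s)$-entry stays finite, which forces the $(x,s)$-started process to be absorbed with probability~$1$; in that case there is a $\pi_P\in\hat\Pi^{(M)}_P$ with $\hat J_{c,\gamma,\epsilon}(x',\hat\alpha,\pi)=\hat J_{c,\gamma,\epsilon}(x',\hat\alpha,\pi_P)$ for all $\gamma$. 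Either way the minimum over $\hat\Pi^{(M)}$ coincides, for $\gamma$ close to $1$, with the minimum over $\hat\Pi^{(M)}_P$, and then your uniform-convergence step applies. If you want to keep your own exclusion route, the repair is to use Assumption~\ref{a2-3} on the objective side to get $J^{(M)}_{obj,\gamma}((x,s);\pi)\to\infty$ for improper $\pi$, rather than arguing through $J^{(M)}_{cond}$.
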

\begin{proof}
This problem is SSP problem for MDP where the action set $\mathcal C^{(M)}$ is finite. 
Therefore, from Proposition~5.11 of \cite{Bertsekas1978}, 
\begin{align*}
\max_{\hat{\alpha}\in \hat{\mathcal A}}\min_{\pi\in \Pi}\hat{J}_{c,\gamma,\epsilon}(x',\hat{\alpha},\pi)
=\max_{\hat{\alpha}\in \hat{\mathcal A}}\min_{\pi\in \hat{\Pi}^{(M)}}\hat{J}_{c,\gamma,\epsilon}(x',\hat{\alpha},\pi). 
\end{align*}
%In the following, we will limit our discussion to the class $\mathcal P^{(M)}$ of mixed policies, which is defined as a probability distribution on $\hat{\Pi}^{(M)}$. 
In the following, we will limit our discussion to the class $\mathcal P^{(M)}$ of mixed policies.  
When $\pi\in \hat{\Pi}^{(M)}$ and $\hat{\alpha}\in \hat{\mathcal A}$ are fixed, calculating an expected total cost is equivalent to solving an algebraic equation in the form $\vb=(\vI-\vA)\vx$ (\cite{Bertsekas1996}). 
If $\pi\in \hat{\Pi}_P^{(M)}$, the coefficient matrix $\vA$ of the corresponding algebraic equation is regular.
Therefore, $\vx=(\vI-\vA)^{-1}\vb$, and $\lim_{\gamma\uparrow 1}\|(\vI-\vA)^{-1}\vb-(\vI-\gamma \vA)^{-1}\vb\|=0$. 
Hence, given $\delta>0$, there is some $\gamma\in (0,1)$ such that for all $\gamma'\in [\gamma,1)$, $\sup_{\pi\in \Pi_P^{(M)}}|J^{(M)}_{obj,\gamma'}(x',\pi)-J^{(M)}_{obj}(x',\pi)|<\delta$. 
If $\pi\in \hat{\Pi}^{(M)}\setminus \hat{\Pi}_P^{(M)}$, then the corresponding coefficient matrix $\vA$ is not regular, so some elements of $(\vI-\gamma \vA)^{-1}\vb$ diverge as $\gamma\uparrow 1$. 
If $\lim_{\gamma \uparrow 1}\hat{J}_{c,\gamma,\epsilon}(x',\hat{\alpha},\pi)<\infty$ for $\pi\in \hat{\Pi}^{(M)}\setminus \hat{\Pi}_P^{(M)}$, then, when $\pi$ is applied to the initial state $x\in \mathcal X$, the probability of transitioning to the state $(0,s)$ or a state in $\mathcal X\times\{f\}$ is 1. 
Therefore, there exists some $\pi_P\in \hat{\Pi}_P^{(M)}$ such that for all $\gamma\in (0,1)$, $\hat{J}_{c,\gamma,\epsilon}(x',\hat{\alpha},\pi)=\hat{J}_{c,\gamma,\epsilon}(x',\hat{\alpha},\pi_P)$.  
Therefore, 
\begin{align}
&\min_{P^{(M)}\in \mathcal P^{(M)}}\max_{\hat{\alpha}\in \hat{\mathcal A}}\sum_{\pi\in \hat{\Pi}^{(M)}}P^{(M)}(\pi)\hat{J}_{c,\gamma,\epsilon}(x',\hat{\alpha},\pi)\nonumber\\
&~~~~~~~~~~~~=\max\Bigl\{J^{(M)}_{obj,\gamma}(x',\hat{\pi}^*_{c,\gamma,\epsilon}),\frac{c}{\epsilon}J^{(M)}_{cond}(x',\hat{\pi}^*_{c,\gamma,\epsilon}) \Bigr\}
\leq \min_{\pi\in \Pi}\max_{\alpha\in \mathcal A}J_{c,\gamma,\epsilon}(x',\alpha,\pi)\nonumber\\
&~~~~~~~~~~~~\leq \max\Bigl\{J^{(M)}_{obj}(x',\hat{\pi}^*_{c,\gamma,\epsilon}),\frac{c}{\epsilon}J^{(M)}_{cond}(x',\hat{\pi}^*_{c,\gamma,\epsilon}) \Bigr\}\nonumber\\
&~~~~~~~~~~~~\leq \max\Bigl\{J^{(M)}_{obj,\gamma}(x',\hat{\pi}^*_{c,\gamma,\epsilon})+\delta,\frac{c}{\epsilon}J^{(M)}_{cond}(x',\hat{\pi}^*_{c,\gamma,\epsilon}) \Bigr\}\nonumber\\
&~~~~~~~~~~~~\leq \min_{P^{(M)}\in \mathcal P^{(M)}}\max_{\hat{\alpha}\in \hat{\mathcal A}}\sum_{\pi\in \hat{\Pi}^{(M)}}P^{(M)}(\pi)\hat{J}_{c,\gamma,\epsilon}(x',\hat{\alpha},\pi)+\delta. 
\end{align} 
Since $\delta$ can be made arbitrarily small, the theorem holds.
\end{proof}
%---------------------------------------------------------------------------------------------------------------------------%
\begin{lem}
\label{l3-13}
%---------------------------------------------------------------------------------------------------------------------------%
For $x\in \mathcal X$, $\hat{c}_{\gamma,\epsilon}(x)\leq c^*_{\gamma,\epsilon}(x)$. 
\end{lem}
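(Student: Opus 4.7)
The plan is to show that every nonnegative $c$ for which the feasibility condition in the definition of $c^*_{\gamma,\epsilon}(x)$ holds also satisfies the feasibility condition in the definition of $\hat{c}_{\gamma,\epsilon}(x)$; taking the minimum over such $c$ then gives $\hat{c}_{\gamma,\epsilon}(x)\leq c^*_{\gamma,\epsilon}(x)$. Since in case (M) we have $\eta(\epsilon;c)=c$, both feasibility conditions have the same right-hand side $c$, so it suffices to compare the left-hand sides.

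First, I would establish the pointwise inequality
\begin{equation*}
\hat{J}_{c,\gamma,\epsilon}(x',\hat{\alpha},\pi)\leq J_{c,\gamma,\epsilon}(x',\alpha,\pi)
\end{equation*}
for every $\pi\in \Pi$, using the natural identification between $\hat{\mathcal A}$ and $\mathcal A$ (both are the probability simplex on a two-element set). The only difference between the two is that the objective term in $\hat{J}$ uses $J^{(M)}_{obj,\gamma}$ in place of $J^{(M)}_{obj}$; because $G^{(M)}_o\geq 0$ and $\gamma\in (0,1)$ give $\gamma^t G^{(M)}_o\leq G^{(M)}_o$ termwise, we get $J^{(M)}_{obj,\gamma}\leq J^{(M)}_{obj}$, while the $J^{(M)}_{cond}$ terms match exactly.

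Next, I would chain min-max operations:
\begin{align*}
\min_{P^{(M)}\in \mathcal P^{(M)}}\max_{\hat{\alpha}\in \hat{\mathcal A}}\sum_{\pi\in \hat{\Pi}^{(M)}}P^{(M)}(\pi)\hat{J}_{c,\gamma,\epsilon}(x',\hat{\alpha},\pi) &= \max_{\hat{\alpha}\in \hat{\mathcal A}}\min_{\pi\in \hat{\Pi}^{(M)}}\hat{J}_{c,\gamma,\epsilon}(x',\hat{\alpha},\pi)\\
&= \max_{\hat{\alpha}\in \hat{\mathcal A}}\min_{\pi\in \Pi}\hat{J}_{c,\gamma,\epsilon}(x',\hat{\alpha},\pi)\\
&\leq \min_{\pi\in \Pi}\max_{\hat{\alpha}\in \hat{\mathcal A}}\hat{J}_{c,\gamma,\epsilon}(x',\hat{\alpha},\pi)\\
&\leq \min_{\pi\in \Pi}\max_{\alpha\in \mathcal A}J_{c,\gamma,\epsilon}(x',\alpha,\pi).
\end{align*}
The first equality is the matrix-game minimax theorem ($\hat{\Pi}^{(M)}$ is finite and the payoff is bilinear in $P^{(M)}$ and $\hat{\alpha}$). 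The second equality is Proposition~5.11 of \cite{Bertsekas1978}, as invoked in the proof of Lemma~\ref{l3-12}. The third step is weak duality, and the fourth is the pointwise inequality from the previous step, taken under the identification $\hat{\alpha}\leftrightarrow\alpha$.

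Combining the two ingredients, for any $c$ satisfying $\min_{\pi\in \Pi}\max_{\alpha\in \mathcal A}J_{c,\gamma,\epsilon}(x',\alpha,\pi)\leq c$ the chain yields $\min_{P^{(M)}}\max_{\hat{\alpha}}\sum P^{(M)}(\pi)\hat{J}_{c,\gamma,\epsilon}(x',\hat{\alpha},\pi)\leq c$, so such $c$ is also feasible in the definition of $\hat{c}_{\gamma,\epsilon}(x)$, yielding $\hat{c}_{\gamma,\epsilon}(x)\leq c^*_{\gamma,\epsilon}(x)$. I do not expect a serious obstacle: the matrix-game minimax swap and Proposition~5.11 are already used in the proof of Lemma~\ref{l3-12} under Assumptions~\ref{a2-1} and \ref{a2-3}, and $\hat{J}\leq J$ is immediate from non-negativity of one-stage costs. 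The one point worth writing out carefully is that the natural identification of $\hat{\alpha}\in \hat{\mathcal A}$ with $\alpha\in \mathcal A$ preserves the probability weights used in the two definitions, which holds by construction.
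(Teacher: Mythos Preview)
Your argument is correct and is essentially the paper's approach spelled out in full: the paper's proof is the single line ``This is obvious from the definitions of $\hat{c}_{\gamma,\epsilon}(x)$ and $c^*_{\gamma,\epsilon}(x)$,'' which implicitly relies on exactly the inequality chain you wrote (the same swap via the matrix-game minimax theorem, Proposition~5.11 of \cite{Bertsekas1978}, weak duality, and the pointwise bound $J^{(M)}_{obj,\gamma}\leq J^{(M)}_{obj}$) that already appears in the proof of Lemma~\ref{l3-12}. Your version simply makes explicit what the paper leaves to the reader.
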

\begin{proof}
This is obvious from the definitions of $\hat{c}_{\gamma,\epsilon}(x)$ and $c^*_{\gamma,\epsilon}(x)$.
\end{proof}
%%=============================================%%
%% For submissions to Nature Portfolio Journals %%
%% please use the heading ``Extended Data''.   %%
%%=============================================%%

%%=============================================================%%
%% Sample for another appendix section			       %%
%%=============================================================%%

%% \section{Example of another appendix section}\label{secA2}%
%% Appendices may be used for helpful, supporting or essential material that would otherwise 
%% clutter, break up or be distracting to the text. Appendices can consist of sections, figures, 
%% tables and equations etc.

%%%%%%%%%%%%%%%%%%%%%%%%%%%%%%%%%%%%%%%%%%%%%%%%%%%%%%%%%%%%

\end{document}